\newtheorem{lemma}{Lemma}[section]
\newtheorem{thm}[lemma]{Theorem}
\theoremstyle{definition} 
\newtheorem{Def}[lemma]{Definition}
\newtheorem{exam}[lemma]{Example}  \theoremstyle{remark} 
\newtheorem{rem}[lemma]{Remark}
 \newcommand{\B}{\mathcal{B}}
\title{{\footnotesize Product of Expansive Markov Maps with Hole}} 
\author[Haritha C]{Haritha C}
\address{Department of Mathematics\\
Indian Institute of Science Education and Research Bhopal\\
Bhopal Bypass Road, Bhauri \\
Bhopal 462 066, Madhya Pradesh\\
India}
\email{charitha@iiserb.ac.in}
\author[Nikita Agarwal]{Nikita Agarwal}
\address{Department of Mathematics\\
Indian Institute of Science Education and Research Bhopal\\
Bhopal Bypass Road, Bhauri \\
Bhopal 462 066, Madhya Pradesh\\
India}
\email{nagarwal@iiserb.ac.in}
\date{\today}
\begin{document}

\begin{abstract}
	We consider product of expansive Markov maps on an interval with hole which is conjugate to a subshift of finite type. For certain class of maps, it is known that the escape rate into a given hole does not just depend on its size but also on its position in the state space. We illustrate this phenomenon for maps considered here. We compare the escape rate into a connected hole and a hole which is a union of holes with a certain property, but have same measure. This gives rise to some interesting combinatorial problems. 
\end{abstract}

\maketitle 

\section{Introduction}
Open dynamical systems or dynamical systems with a hole were first proposed by Pianigiani and Yorke in~\cite{PY}. Such systems are interesting because of their dynamical properties and also their applications, we refer to~\cite{BC, BKT, BDM, CM, Dem, DemYoung, DWY} for some related work. In~\cite{BY}, and also~\cite{AB, KL}, the question of whether escape rate into two holes of same measure is the same, was asked. They gave an affirmative answer when there exists a group of measure preserving translations of the state space which commute with the dynamics. For a system with strongly chaotic dynamics, they proved that the escape will be faster through a hole where the minimal period (minimum among periods of all periodic points in that hole) is larger. They considered various classes
of dynamical systems with strongly chaotic behaviour and Markov holes. In particular, if the dynamical system is conjugate to a full shift, symbolic dynamics was used to obtain these results. 

In this paper, we discuss a similar question as the one asked by~\cite{BY}. Here we consider product of expanding Markov maps. Such maps are conjugate to a subshift of finite type where the transition matrix and Markov partition are induced by the individual maps in the product. We consider Markov rectangles which are product of Markov intervals of individual factors, and compare the escape rates into two such rectangles with the same measure. For the product map, a Markov rectangle corresponds to a union of cylinders, whereas each Markov interval in the product corresponds to a single cylinder, in the associated shift spaces. In~\cite{BY}, this was the key idea to obtain results. 

We also compare the escape rates into two holes with the same measure, where one hole is a Markov rectangle which corresonds to a single cylinder and the other is a union of several (basic) Markov rectangles of smaller measure with a certain property. We compute the escape rate using two techniques, one is using the $M$-step shift, as it is popularly known. The other is using the combinatorial technique, as given in~\cite{Combinatorial}. The first technique reduces to computing the corresponding transition matrix and its largest eigenvalue in modulus, which is real and positive by the Perron-Frobenius theorem (if the matrix is irreducible). The second technique involves solving the generating function for the number of allowed sequences which never hit the hole. Here, the problem reduces to solving a recurrence relation. Depending on the situation, we will choose one of these techniques to compute the escape rate. In certain cases, we will employ both techniques for illustration purposes. The combinatorial technique is accessible when the number of forbidden words is one (at most two), or when the correlation matrix has a `nice' form, which we have exploited in Section~\ref{sec:compare}. In all other cases, the other approach is useful. But it should be noted that, if the size of the transition matrix is large, it is difficult to obtain general results.

In Section~\ref{sec:prelim}, we give preliminaries and the set-up of the broad class of maps that we have considered in this paper. In Section~\ref{sec:prod}, we consider product of expansive Markov maps and discuss techniques to compute the escape rate into a hole. In Section~\ref{sec:torus_map}, we consider a product of two $k$-expanding transformations and holes which are products of Markov intervals of individuals factors. A Markov rectangle is a union of cylinders of a certain type, under the conjugacy with the full shift space. In Section~\ref{sec:compare}, we compare the escape rate into a basic rectangle with the escape rate into a union of basic rectangles of identical measure (with certain property), having same total measure as the single basic rectangle, see Theorems~\ref{thm:large} and~\ref{thm:large1}. This collection of rectangles give rise to interesting combinatorial problems (independent of their dynamical interest) which are discussed towards the end of Section~\ref{sec:conc}. In Section~\ref{sec:subshift}, we discuss maps which are conjugate to a subshift of finite type (and not necessarily with the full shift space). We will observe that the escape rate does not necessarily depend on the minimal period in the hole. Finally we give concluding remarks in Section~\ref{sec:conc}. 

\section{Preliminaries}\label{sec:prelim}
\subsection{Poincar\'e Recurrence Time and Escape Rate}
Let $(X,\B,\mu)$ be a probability space and $T:X\to X$ be a measure-preserving transformation (that is, $T$ is a measurable map and $\mu(T^{-1}A)=\mu(A)$ for every $A\in\B$).
Consider $A\in\B$, any measurable set with $\mu(A)>0$.

\begin{Def}
	The \textit{Poincar\'e recurrence time} of $A$ under $T$ is defined as the positive integer $\tau(A)$ given by 
	\[
	\tau(A)=\inf_{n \geq 1}\{n\ \vert\ \mu(T^{-n}(A)\cap A)>0\}.
	\]
\end{Def}

\noindent If $\tau(A)$ is finite, then it is the minimum $n\geq 1$ at which the points in $A$ whose orbit under $T$ intersects with $A$ has positive measure. By Poincar\'e recurrence theorem, $\tau(A)$ is always finite. 

For a given $A\in\B$ with $\mu(A)>0$, known as ``hole", consider the restriction map $T\vert_{(X\setminus A)}:X\setminus A\to X$. The iterates of this map given by $T\vert_{(X\setminus A)}^n$ are well-defined as long as the orbit of $x\in X\setminus A$ under the map $T\vert_{(X\setminus A)}$ does not escape into the hole $A$ (that is, $T^n(x)\notin A$ for all $n$). 

\begin{Def}\label{escape-rate}
	The \textit{escape rate} into a hole $A\in\B$ is defined as the non-negative number given by 
	\[
	\rho(A)=-\lim_{n \to \infty}\frac{1}{n}\ln \mu(X\setminus \Omega_n(A)),
	\]
	provided the limit exists, where for $n \geq 0$,
	\[
	\Omega_n(A)=\{x \in X \ \vert \ T^j(x)\in A \text{ for some } 0\leq j\leq n\}=\bigcup_{j=0}^n T^{-j}(A).
	\] 
\end{Def}

\noindent Escape rate represents the average rate at which the orbits escape into the hole. Larger the escape rate, faster the orbits terminate.

\begin{Def}
	Let $T_1$ and $T_2$ be two measure-preserving transformations on the probability spaces $(X_1,\B_1,\mu_1)$ and $(X_2,\B_2,\mu_2)$, respectively. The transformations $T_1$ and $T_2$ are said to be \textit{metrically conjugate} if there exists $A_i\in\B_i$ with $\mu_i(A_i)=1$ and $T_i(A_i)\subseteq A_i$ for $i=1,2$, and there is an invertible measure-preserving transformation $\psi:A_2\to A_1$ such that $\psi\circ T_2(x)=T_1\circ \psi(x)$, for every $x\in A_2$.
\end{Def}
The map $\psi$ is called the \textit{conjugacy map}. It is shown in ~\cite[Lemma 2.3.5]{BY} that the escape rate is invariant under this conjugacy, that is, if $T_1,T_2$ are metrically conjugate under the conjugacy map $\psi$, then $\rho_{T_2}(A)=\rho_{T_1}(\psi(A))$ for any hole $A\in \B_2$.

\subsection{Markov Maps} 
We refer to~\cite{Markov} for more details.

\begin{Def}
	Let $I=[a,b]$ be a closed interval in $\mathbb{R}$. A collection of closed intervals $\{I_0,I_1,\dots, I_{N-1}\}$ is called a \textit{partition} of $I$ if\begin{enumerate}
		\item[a)] $I=\bigcup_{i=0}^{N-1} I_i$,
		\item[b)] $\text{int}(I_i)\cap \text{int}(I_j)=\emptyset$, for $i\ne j$.
	\end{enumerate}
\end{Def} 

\begin{Def}
	A map $f:I\rightarrow I$ is called a \emph{Markov map} if there exists a partition $\{I_0,I_1,\dots, I_{N-1}\}$ of $I$, called a \emph{Markov partition} such that for all $0\leq i,j\leq N-1$,
	\begin{enumerate}
		\item[a)] either $f(\text{int}(I_i))\cap \text{int}(I_j)=\emptyset$, or
		\item[b)] $\text{int}(I_j)\subseteq  f(\text{int}(I_i))$.
	\end{enumerate}
\end{Def}

\begin{Def}
	The $N\times N$ matrix $A$ defined by
	\[
	A_{ij} = 1, \ \text{if}\ \text{int}(I_j)\subseteq  f(\text{int}(I_i)),\ \text{and}\ A_{ij}=0, \text{ if } f(\text{int}(I_i))\cap \text{int}(I_j)=\emptyset,
	\]
	is called the \emph{topological transition matrix} of the Markov map $f$. (Note: $A_{ij}$=1 if and only if the transition from $I_i$ to $I_j$ is allowed.)
\end{Def}

\begin{Def}
	A Markov map $f$ is said to be \emph{expansive (expanding) Markov map} if $f$ is smooth on each $\text{int}(I_i)$ and there exists $\lambda>1$ such that $\vert f'(x)\vert \geq \lambda$, for all $x\in \text{int}(I_i)$, $i=0,1,\dots,N-1$.
\end{Def}

\begin{Def}
	A sequence $x_0x_1\dots \in \{0,1,\dots,N-1\}^\mathbb{N}$ is said to be \emph{admissible} if $A_{x_nx_{n+1}}=1$, for all $n\geq 0$. Let $\Sigma_A$ denote the collection of all admissible sequences in $\{0,1,\dots,N-1\}^\mathbb{N}$. Let $\sigma:\Sigma_A\rightarrow \Sigma_A$ be the left shift map.
	
	When $A$ has all the entries to be 1, then any sequence in $\{0,1,\dots,N-1\}^{\mathbb{N}}$ is admissible. In this case, $\Sigma_A=\{0,1,\dots,N-1\}^{\mathbb{N}}$ is called the \textit{full shift space}. Otherwise it is called a \textit{subshift of finite type}.
\end{Def}
\begin{exam}
	\begin{figure}
		\centering
		\includegraphics[width=.8\textwidth]{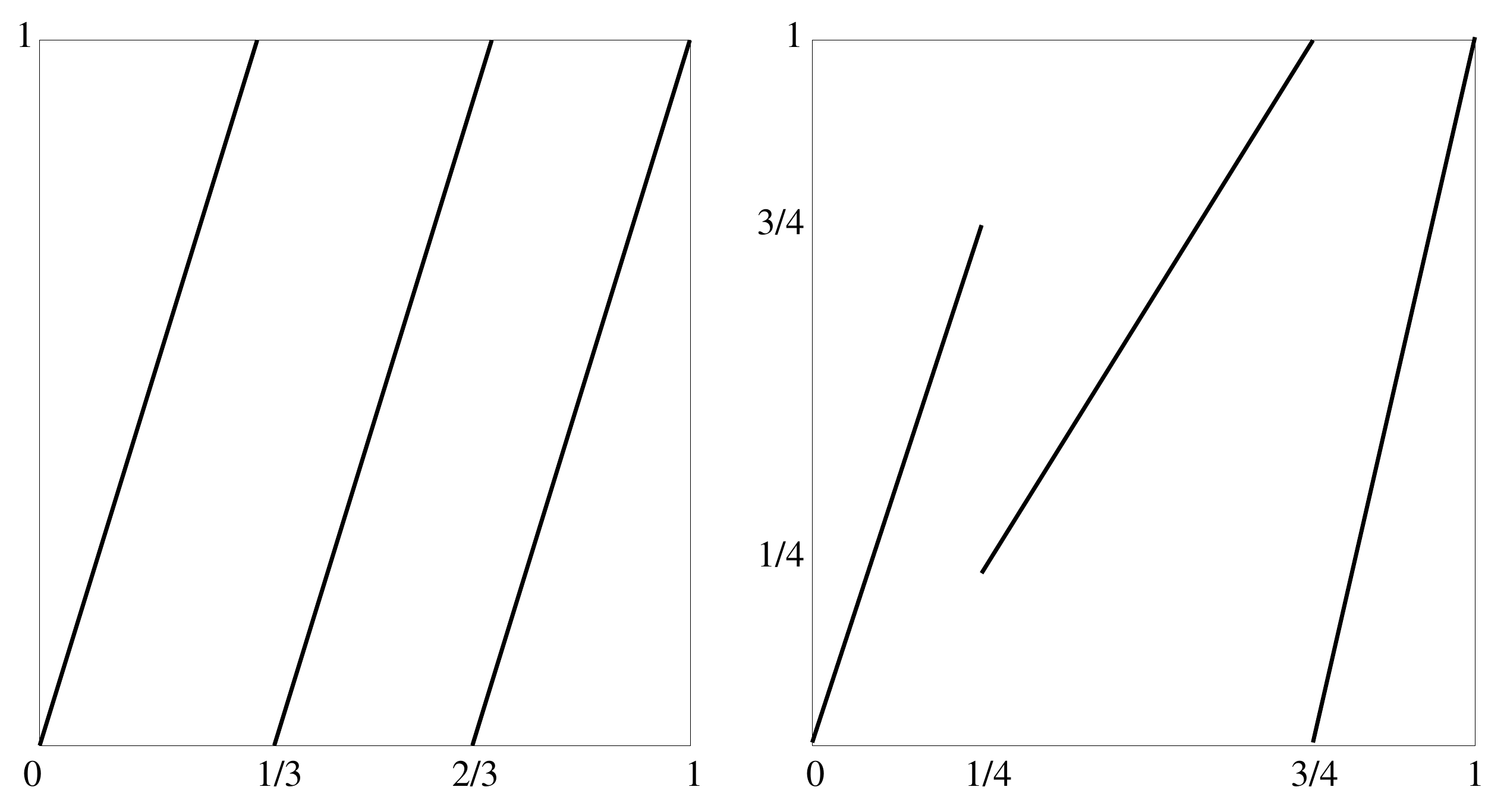}
		\caption{Examples of expansive Markov maps.}
		\label{fig:0}
	\end{figure}	
	Figure~\ref{fig:0} shows two examples of expansive Markov maps. The partition $\{I_0=\left[0,1/3\right], I_1=\left[1/3,2/3\right], I_2=\left[2/3,1\right]\}$ is a Markov partition of $I=[0,1]$ for the map on the left in the figure. The partition $\{I_0=\left[0,1/4\right], I_1=\left[1/4,3/4\right], I_2=\left[3/4,1\right]\}$ is a Markov partition of $I=[0,1]$ for the map on the right in the figure. The corresponding topological transition matrices for the left and the right maps are given by
	\[
	\begin{pmatrix}
	1&1&1\\
	1&1&1\\
	1&1&1
	\end{pmatrix} \text{ and}\ 
	\begin{pmatrix}
	1&1&0\\
	0&1&1\\
	1&1&1
	\end{pmatrix},
	\]
	respectively.  
\end{exam}

\noindent Note that an expansive Markov map $f$ need not be measure preserving with respect to the Lebesgue measure (see the map on the right in Figure~\ref{fig:0}). The following theorem gives the existence of an invariant measure for $f$ and a conjugacy with the left shift map on the shift space $\Sigma_A$.	We refer~\cite[Proposition 4.8]{Pollicott_book} for the following theorem.
\begin{thm}\label{thm:symbonto}
	Let $f:I\rightarrow I$ be an expansive Markov map with transition matrix $A$ and Markov partition $\{I_0,I_1,\dots,I_{N-1}\}$. For any admissible sequence $x_0x_1\dots \in\Sigma_A$ there exists a unique point $x\in I$ such that $f^{n}(x)\in I_{x_n}$, for all $n\geq 0$. This mapping (say $\pi$) from $\Sigma_A$ to the interval $I$ taking $x_0x_1\dots$ to $x$ is onto. Moreover, $\pi\circ \sigma = f\circ \pi$.
\end{thm}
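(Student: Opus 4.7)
The plan is to build $\pi$ via a nested-intersection construction and then harvest expansivity to pin down unique limits. Given an admissible sequence $x_0x_1\ldots \in \Sigma_A$, I would introduce the level-$n$ cylinder
$$C_n := I_{x_0} \cap f^{-1}(I_{x_1}) \cap \cdots \cap f^{-n}(I_{x_n}),$$
and argue inductively that each $C_n$ is a nonempty closed subinterval of $I_{x_0}$. The base case $C_0 = I_{x_0}$ is trivial. For the step, note that $|f'| \geq \lambda > 1$ on $\mathrm{int}(I_i)$ forces $f|_{I_{x_n}}$ to be a strictly monotone homeomorphism onto $f(I_{x_n})$, while admissibility $A_{x_nx_{n+1}} = 1$ together with the Markov property yields $I_{x_{n+1}} \subseteq f(I_{x_n})$. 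Pulling this inclusion back through the previously constructed piecewise-monotone branch of $f^n$ on $C_n$ exhibits $C_{n+1}$ as a nonempty closed subinterval of $C_n$.

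Next, I would use expansivity to collapse the nest. Since $f^n$ is smooth on $C_n$ with $|(f^n)'| \geq \lambda^n$ by the chain rule, the mean value theorem gives
$$\mathrm{diam}(C_n) \,\leq\, \mathrm{diam}(I_{x_n})/\lambda^n \,\leq\, |I|/\lambda^n \,\to\, 0.$$
By the Cantor intersection theorem, $\bigcap_{n \geq 0} C_n = \{x\}$ for a unique $x \in I$, and by construction $f^n(x) \in I_{x_n}$ for every $n \geq 0$. This simultaneously proves existence and uniqueness of the point $x$ and defines the coding map $\pi(x_0x_1\ldots) := x$.

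For surjectivity, I would first handle points $y \in I$ whose forward orbit avoids the (countable) set of partition endpoints: each $f^n(y)$ then lies in the interior of a unique $I_{x_n}$, and the resulting sequence is automatically admissible because $f(\mathrm{int}(I_{x_n})) \cap \mathrm{int}(I_{x_{n+1}}) \neq \emptyset$ forces $A_{x_nx_{n+1}} = 1$ by definition of a Markov map; hence $y = \pi(x_0x_1\ldots)$. The remaining, countable, set of exceptional points is covered by choosing, at each eventual boundary visit, one of the (at most two) adjacent indices in a way that keeps the code admissible. The semi-conjugacy identity is then immediate: if $\pi(x_0x_1\ldots) = x$, then $f^{n+1}(x) = f^n(f(x)) \in I_{x_{n+1}}$ for all $n \geq 0$, so $f(x)$ is the unique point with itinerary $\sigma(x_0x_1\ldots)$, giving $f(\pi(x_0x_1\ldots)) = \pi(\sigma(x_0x_1\ldots))$.

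The main obstacle I anticipate is the boundary-point case in surjectivity, where the itinerary is genuinely non-unique and one must produce a consistent admissible coding rather than simply inherit it from an interior orbit; all other steps reduce to standard nested-interval bookkeeping, the chain rule, and the Markov property applied one step at a time.
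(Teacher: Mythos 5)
The paper does not prove this statement at all --- it is quoted from the literature (the text preceding the theorem refers the reader to Pollicott--Yuri, Proposition 4.8) --- so there is no in-paper argument to compare against; your proposal is essentially the standard textbook proof of that proposition, and it is correct in all its main steps: the nested cylinders $C_n$ are nonempty closed intervals because each $f|_{I_i}$ is a monotone homeomorphism onto $f(I_i)$ and admissibility gives $I_{x_{n+1}}\subseteq f(I_{x_n})$; the diameter bound $\mathrm{diam}(C_n)\le |I|/\lambda^n$ follows from the chain rule and the mean value theorem on the relevant branch; and the Cantor intersection theorem yields existence and uniqueness simultaneously. The semi-conjugacy argument is also exactly right and matches the uniqueness-based argument the paper does spell out in the analogous product setting (Theorem \ref{product-factor}).

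The one place where you correctly sense a soft spot but leave it unresolved is surjectivity at orbits that hit partition endpoints: you assert that one can choose ``one of the (at most two) adjacent indices in a way that keeps the code admissible'' without showing such a globally consistent choice exists. This is fillable in one line, and once filled it removes the need to treat interior-orbit points separately at all. From the Markov property and monotonicity of $f$ on $I_i$ one gets $f(I_i)=\bigcup_{j:\,A_{ij}=1} I_j$ (the image of $\mathrm{int}(I_i)$ is an open interval meeting each $\mathrm{int}(I_j)$ either trivially or entirely, and taking closures gives the identity). Hence for an arbitrary $y\in I$ one codes greedily: pick $x_0$ with $y\in I_{x_0}$; having chosen $x_n$ with $f^n(y)\in I_{x_n}$, the point $f^{n+1}(y)$ lies in $f(I_{x_n})=\bigcup_{j:\,A_{x_nj}=1}I_j$, so some $x_{n+1}$ with $A_{x_nx_{n+1}}=1$ and $f^{n+1}(y)\in I_{x_{n+1}}$ exists. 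The resulting sequence is admissible by construction and codes $y$ by your uniqueness step. (Alternatively, one can approximate $y$ by points with interior orbits and pass to a limit of their codes using compactness of $\Sigma_A$ and closedness of the $I_{x_n}$.) With that addition your argument is complete.
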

\begin{Def}
	Let $w$ be a finite word that appears as a subword in some sequence in $\Sigma_A$.
	The \textit{cylinder} based at  $w$ is defined as a subset of $\Sigma_A$ consisting of all the one-sided sequences in $\Sigma_A$ that start with $w$. It is denoted as $C_w$. 
\end{Def}
\noindent To illustrate the above definition, $C_{01}=\{01x_1x_2\dots\ \vert\   01x_1x_2\dots\in \Sigma_A\}$. 
\begin{rem} \label{rem:meas}
	1) In Theorem~\ref{thm:symbonto}, it is enough to assume that an iterate of $f$ is expansive (rather than $f$ itself).\\
	2) The map $\pi$ is one-one except on a countable set.\\
	3) If $A$ is irreducible (that is, for each pair of indices $0\leq i,j\leq N-1$, there exists $k\geq 1$ such that $(A^k)_{ij}>0$), we define a Markov measure (known as \textit{Parry measure}) $\mu$ on $\Sigma_A$ as follows;
	Let $w=i_1i_2\dots i_n$ be a word of length $n\geq 2$ that appears as a subword in some sequences in $\Sigma_A$, and let
	$C_w$ be the cylinder based at $w$. We define
	\[
	\mu(C_w)=\frac{u_{i_1}v_{i_n}}{\lambda^{n-1}}a_{i_1i_2}a_{i_2i_3}\dots a_{i_{n-1}i_n},
	\]
	where $\lambda$ is the largest eigenvalue of $A=(a_{ij})$ (which is both real and positive by Perron-Frobenius theorem) and $v = (v_0,v_1,\dots,v_{N-1})^T$, $u = (u_0,u_1,\dots,u_{N-1})$
	are the normalized right and left eigenvectors of $A$ with respect to the eigenvalue $\lambda$ such that $uv=1$. Note that $\mu(C_w)=u_iv_i$ if $w=i$ for $0\leq i\leq N-1$. This definition can be extended to the $\sigma$-algebra generated by cylinders based at all the allowed words of finite length. Note that the shift map $\sigma$ on $\Sigma_A$ is measure preserving with respect to this Markov measure.\\
	For the full shift in $N$ symbols, since all the entries of $A$ are $1$, $\mu(C_w)=\frac{1}{N^n}$ for all the words $w$ of length $n$. Hence the cylinders based at words of same length have same measure, which is not true for a general shift space.\\
	4) Since the map $\pi$ is a bijection except on a measure zero set in $\Sigma_A$, we define a measure $\tilde{\mu}$ on $I$ induced from the measure $\mu$ on $\Sigma_A$. We say that  $B\subseteq I$ is $\tilde{\mu}$-measurable set if $\pi^{-1}(B)\subseteq \Sigma_A$ is $\mu$-measurable and the measure $\tilde{\mu}$ on $I$ is given as $\tilde{\mu}(B)=\mu(\pi^{-1}(B))$. With respect to this new measure $\tilde{\mu}$, the map $f$ is measure-preserving and $\pi$ is a conjugacy between $f$ and $\sigma$. 
\end{rem}

\subsection{Product of expansive Markov maps}\label{sec:prod_emm} 
Let $f_1,\dots,f_k:I\rightarrow I$ be expansive Markov maps. Let $\{I_0^i,I_1^i,\dots, I_{N_i-1}^i\}$ be a Markov partition of $f_i$ and $A_i$ be the associated $N_i\times N_i$ transition matrix, for $i=1,\dots,k$. Consider the product map $f=f_1\times \dots\times f_k:I^k\rightarrow I^k$ defined as 
\begin{eqnarray}\label{eq:product}
	f(x_1,\dots,x_k) = (f_1(x_1),\dots,f_k(x_k)),
\end{eqnarray}
for all $x_1,\dots,x_k\in I$.

\subsubsection{Markov partition} Set $N=N_1\dots N_k$. The $N$ rectangles $\{R_{i_1,\dots,i_k}=I_{i_1}^1\times\dots\times I_{i_k}^k\ \vert \ 0\leq i_j\leq N_j-1,\ 1\leq j\leq k\}$ form a Markov partition of $I^k$ in the sense that for all $0\leq i_j,i_j'\leq N_j-1$, $1\leq j\leq k$,
\begin{enumerate}
	\item[a)] either $f(\text{int}(R_{i_1,\dots,i_k}))\cap \text{int}(R_{i_1',\dots,i_k'})=\emptyset$, or\item[b)] $\text{int}(R_{i_1',\dots,i_k'})\subseteq  f(\text{int}(R_{i_1,\dots,i_k}))$.
\end{enumerate}

\noindent Define a bijection $\phi$ from $\{0,1,\dots, N-1\}$ to $\{(i_1,\dots,i_k) \ \vert \ 0\leq i_j\leq N_j-1,\ 1\leq j\leq k\}$ as follows:\\
For each $0\leq n\leq N-1$, we can write $n=\sum_{j=2}^k i_{j-1} N_kN_{k-1}\dots N_{j}+i_k$ with $0\leq i_j\leq N_j-1$. Set 
\[
\phi(n) = (i_1,\dots,i_k).
\]
The map $\phi$ is well-defined. Using $\phi$, we can index the rectangles as $\{R_n\sim R_{\phi(n)}\ \vert\ 0\leq n\leq N-1\}$.

\subsubsection{Transition matrix} The associated $N\times N$ transition matrix $A$ is given as follows: $A_{nm}=1$ if and only if $(A_j)_{i_ji_j'}=1$ for all $1\leq j\leq k$, where $\phi(n)=(i_1,\dots,i_k)$ and $\phi(m)=(i_1',\dots,i_k')$. In other words, 
\[
A_{nm}=(A_1)_{i_1i_1'}(A_2)_{i_2i_2'}\dots(A_k)_{i_ki_k'}.
\]
Let $\Sigma_A \subseteq \{0,1,\dots,N-1\}^\mathbb{N}$ denotes the collection of all admissible sequences.

\begin{thm}\label{product-factor}
	Let $f:I^k\rightarrow I^k$ be as in~(\ref{eq:product}) with transition matrix $A$. For any admissible sequence $x_0x_1\dots \in \Sigma_A$ there exists a unique point $x\in I^k$ such that $f^{n}(x)\in R_{x_n}$, for all $n\geq 0$. This mapping (say $\pi$) from $\Sigma_A$ to $I^k$ is onto. Moreover, $\pi\circ \sigma = f\circ \pi$.
\end{thm}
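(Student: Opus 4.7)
The plan is to reduce Theorem~\ref{product-factor} to a coordinate-wise application of Theorem~\ref{thm:symbonto}, exploiting the product structure of both the transition matrix $A$ and the partition rectangles $\{R_n\}$ via the bijection $\phi$.

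First I would establish the key translation between admissibility in the product shift and admissibility in each factor. Given a sequence $x_0 x_1 \ldots \in \Sigma_A$, write $\phi(x_n) = (i_1^{(n)}, \ldots, i_k^{(n)})$ for every $n \geq 0$. By construction of $A$,
\[
1 = A_{x_n x_{n+1}} = (A_1)_{i_1^{(n)} i_1^{(n+1)}} (A_2)_{i_2^{(n)} i_2^{(n+1)}} \cdots (A_k)_{i_k^{(n)} i_k^{(n+1)}},
\]
and since each factor is $0$ or $1$, all factors must equal $1$. Hence for every $j \in \{1,\ldots,k\}$ the sequence $i_j^{(0)} i_j^{(1)} \ldots$ is admissible in $\Sigma_{A_j}$.

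Next I would apply Theorem~\ref{thm:symbonto} to each factor map $f_j$: there exists a unique $x^{(j)} \in I$ with $f_j^n(x^{(j)}) \in I^j_{i_j^{(n)}}$ for all $n \geq 0$. Setting $x = (x^{(1)}, \ldots, x^{(k)})$ and using the definition of $f$ in~(\ref{eq:product}), we get $f^n(x) = (f_1^n(x^{(1)}), \ldots, f_k^n(x^{(k)})) \in I^1_{i_1^{(n)}} \times \cdots \times I^k_{i_k^{(n)}} = R_{x_n}$. Uniqueness of $x$ follows directly from uniqueness in each coordinate, since any other $y \in I^k$ satisfying $f^n(y) \in R_{x_n}$ would produce, coordinate-wise, another point satisfying the condition of Theorem~\ref{thm:symbonto} in some factor. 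This defines the map $\pi$ and establishes the defining property.

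For surjectivity, I would reverse the argument: take an arbitrary $y = (y^{(1)}, \ldots, y^{(k)}) \in I^k$ and apply the surjectivity part of Theorem~\ref{thm:symbonto} to each $y^{(j)}$, obtaining admissible sequences $i_j^{(0)} i_j^{(1)} \ldots \in \Sigma_{A_j}$ with $f_j^n(y^{(j)}) \in I^j_{i_j^{(n)}}$. Setting $x_n = \phi^{-1}(i_1^{(n)}, \ldots, i_k^{(n)})$ yields a sequence in $\{0, \ldots, N-1\}^{\mathbb{N}}$; admissibility in $\Sigma_A$ follows from the product formula for $A_{x_n x_{n+1}}$ above, and by construction $\pi(x_0 x_1 \ldots) = y$. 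Finally the intertwining relation $\pi \circ \sigma = f \circ \pi$ is an immediate consequence of the defining property applied to the shifted sequence, since $f^n(f(x)) = f^{n+1}(x) \in R_{x_{n+1}} = R_{(\sigma(x_0 x_1 \ldots))_n}$.

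No step looks technically deep; the only thing to be careful about is the bookkeeping with the indexing bijection $\phi$ and making sure that admissibility really does decouple into the $k$ coordinates, which is why I front-loaded that reduction. The argument is essentially a verification that the product construction of $A$ and $\{R_n\}$ is compatible with the symbolic coding supplied by Theorem~\ref{thm:symbonto}.
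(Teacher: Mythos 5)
Your proof is correct and takes essentially the same route as the paper's: decouple admissibility coordinate-wise using the product form of $A$, apply Theorem~\ref{thm:symbonto} in each factor to get the unique point $x=(x^{(1)},\dots,x^{(k)})$, and deduce $\pi\circ\sigma=f\circ\pi$ from uniqueness. The only difference is that you spell out the surjectivity argument explicitly (reversing the coordinate-wise coding), which the paper leaves implicit.
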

\begin{proof}
	Let $\phi(x_n)=(i_n^1,\dots,i_n^k)$, $n\geq 0$. Then for $1\leq j\leq k$, $i_0^ji_1^j\dots$ is an admissible sequence corresponding to the transition matrix $A_j$ for the map $f_j$. Thus by Theorem~\ref{thm:symbonto}, there exists a unique point $x^j\in I$ such that $f_j^n(x^j)\in I_{i_n^j}$, for all $n\geq 0$, $1\leq j\leq k$. Set $x=(x^1,\dots,x^k)\in I^k$. Then $f^n(x)\in I_{i_n^1}\times \dots\times I_{i_n^k}=R_{i_n^1,\dots,i_n^k}=R_{\phi(x_n)}\sim R_{x_n}$.\\
	Consider $f\circ\pi(x_0x_1\dots)=f(x)$ (notation as in the previous part of the theorem), where $f^n(x)\in R_{x_n}$, for $n\geq 0$. Thus $f^n(f(x))\in R_{x_{n+1}}$, for $n\geq 0$. Also $\pi\circ \sigma(x_0x_1\dots)=\pi(x_1x_2\dots)=y$ if and only if $f^n(y)\in R_{x_{n+1}}$, for $n\geq 0$. By uniqueness, we get $f(x)=y$. Hence $\pi\circ \sigma = f\circ \pi$.
\end{proof}

\begin{rem} 
	1) For each $1\leq j\leq k$, $A_j$ has all the entries as 1 if and only if $A$ has all its entries 1. In this case, $\Sigma_A=\{0,1,\dots,N-1\}^\mathbb{N}$. \\
	2) The rows and columns of the matrix $A$ are labelled as $0,1,\dots,N-1$, whose $nm^{\text{th}}$ entry is $(A_1)_{i_1i_1'}\dots (A_k)_{i_ki_k'}$, where $\phi(n)=(i_1,\dots,i_k)$ and $\phi(m)=(i_1',\dots,i_k')$.\\
	To illustrate this, we consider the following example. For $k=2$, $N_1=N_2=2$, $A_1=\begin{pmatrix}
	1&1\\
	1&0
	\end{pmatrix}$, $A_2=\begin{pmatrix}
	0&1\\
	1&1
	\end{pmatrix}$, we have $N=4$ and $\phi(0)=(0,0)$, $\phi(1)=(0,1)$, $\phi(2)=(1,0)$, and $\phi(3)=(1,1)$. Thus 
	
	\begin{eqnarray*}
		A&=&\begin{pmatrix}
			& \textbf{00}&\textbf{01}&\textbf{10}&\textbf{11}\\
			\textbf{00}&(A_1)_{00}(A_2)_{00}&(A_1)_{00}(A_2)_{01}&(A_1)_{01}(A_2)_{00}&(A_1)_{01}(A_2)_{01}\\
			\textbf{01}&(A_1)_{00}(A_2)_{10}&(A_1)_{00}(A_2)_{11}&(A_1)_{01}(A_2)_{10}&(A_1)_{01}(A_2)_{11}\\
			\textbf{10}&(A_1)_{10}(A_2)_{00}&(A_1)_{10}(A_2)_{01}&(A_1)_{11}(A_2)_{00}&(A_1)_{11}(A_2)_{01}\\
			\textbf{11}&(A_1)_{10}(A_2)_{10}&(A_1)_{10}(A_2)_{11}&(A_1)_{11}(A_2)_{10}&(A_1)_{11}(A_2)_{11}
		\end{pmatrix}\\
		&=&\begin{pmatrix}
			(A_1)_{00}A_2&(A_1)_{01}A_2\\
			(A_1)_{10}A_2&(A_1)_{11}A_2
		\end{pmatrix} \\
		&=& A_1\otimes A_2\\
		&=&\begin{pmatrix}
			0&1&0&1\\
			1&1&1&1\\
			0&1&0&0\\
			1&1&0&0
		\end{pmatrix}.
	\end{eqnarray*}
	In general, $A=A_1\otimes A_{2}\otimes\dots\otimes A_k$. A direct argument using induction is as follows. Let $A^\prime$ be the $N^\prime\times N^\prime$ transition matrix corresponds to the product map $f_1\times \dots\times f_{k-1}$ where $N^\prime=N_1N_2\dots N_{k-1}$. By induction hypothesis, $A^\prime=A_{1}\otimes\dots\otimes A_{k-1}$. Let $\phi^\prime$ be the bijection from  $\{0,1,\dots, N^\prime-1\}$ to $\{(i_1,\dots,i_{k-1}) \ \vert \ 0\leq i_j\leq N_j-1,\ 1\leq j\leq k-1\}$. For $0\leq s,t\leq N^\prime-1$, let $\phi^\prime(s)=(i_1,i_2,\dots,i_{k-1})$ and $\phi^\prime(t)=(i_1',i_2'\dots,i_{k-1}')$. Hence $\text{st}^{th}$ entry of $A^\prime$ is $(A_1)_{i_1i_1'}\dots (A_{k-1})_{i_{k-1}i_{k-1}'}$. For a given $0\leq m,n\leq N-1$, where $N=N^\prime N_k$, let $\phi(m)=(i_1,\dots,i_{k-1},i_k)$ and $\phi(n)=(i_1',\dots,i_{k-1}',i_k')$. \\
	The $mn$-th entry of $A$ is given by $(A_1)_{i_1i_1'}\dots (A_k)_{i_ki_k'}=(A^\prime)_{st}(A_k)_{i_ki_k'}$, where $\phi^\prime(s)=(i_1,i_2,\dots,i_{k-1})$ and $\phi^\prime(t)=(i_1',i_2',\dots,i_{k-1}')$.
	Hence, similar to $k=2$ case, note that
	
	\begin{eqnarray*}
		A&=&
		\begin{pmatrix}
			(A^\prime)_{00}A_k&(A^\prime)_{01}A_k&\dots&(A^\prime)_{0(N_k-1)}A_k\\
			(A^\prime)_{10}A_k&(A^\prime)_{11}A_k&\dots&(A^\prime)_{1(N_k-1)}A_k\\
			\vdots&\vdots&\vdots&\vdots\\
			(A^\prime)_{(N_k-1)0}A_k&(A^\prime)_{(N_k-1)1}A_k&\dots&(A^\prime)_{(N_k-1)(N_k-1)} A_k
		\end{pmatrix}\\
		&=&A^\prime\otimes A_k\\
		&=&A_1\otimes A_{2}\otimes\dots\otimes A_k.
	\end{eqnarray*}
	
	\noindent 3) If each $A_j$ is irreducible, then $A$ is irreducible. This follows from the tensor product representation of $A$ in terms of $A_1,\dots, A_k$. The matrix $A$ is irreducible means there exists a finite word that begins with $m$ and ends with $n$, for any given $m,n\in\{0,1,\dots,N-1\}$. \\
	4) With this transition matrix $A=A_1\otimes A_2\otimes\dots\otimes A_k$, we can define a (Parry) measure $\mu$ on $\Sigma_A$ as before. This gives a measure $\tilde{\mu}$ on $I^k$ such that the map $\pi$ in Theorem~\ref{product-factor} is a conjugacy. Let $\mu_s$ be the (Parry) measure on $\Sigma_{A_s}$ and $\tilde{\mu_s}$ be the corresponding measure on $I$ such that each map $\pi_s:\Sigma_{A_s}\to I$ is a conjugacy between $f_s$ on $I$ and $\sigma$ on $\Sigma_{A_s}$ as in Theorem~\ref{thm:symbonto} for $s=1,2,\dots,k$. These measures give a product measure $\tilde{\mu_1}\times\tilde{\mu_2}\times\dots\times\tilde{\mu_k}$ on $I^k$. We will show that $\tilde{\mu}=\tilde{\mu_1}\times\tilde{\mu_2}\times\dots\times\tilde{\mu_k}$. 
	
	Let $w=i_1i_2\dots i_n$ be an allowed word in $\Sigma_A$. Let $\phi(i_j)=(i_j^1,i_j^2,\dots,i_j^k)$ for $1\leq j\leq n$. Then $w^s=i_1^si_2^s\dots i_n^s$ is an allowed word in $\Sigma_{A_s}$. Note that if $\lambda_1,\lambda_2,\dots,\lambda_k$ are the largest eigenvalues of $A_1,A_2,\dots,A_k$ with right and left normalized eigenvectors $v^1,v^2,\dots,v^k$ and $u^1,u^2,\dots,u^k$  respectively. Then by the properties of the tensor product of matrices, $\lambda=\lambda_1\lambda_2\dots\lambda_k$ is the largest eigenvalue of $A$ with right and left eigenvectors $v=v^1\otimes v^2\otimes\dots\otimes v^k$ and $u=u^1\otimes u^2\otimes\dots\otimes u^k$, respectively. Hence it is easy to see that 
	\[
	\mu(C_w)=\mu_1(C^1_{w^1})\mu_2(C^2_{w^2})\dots\mu_k(C^k_{w^k}),
	\]
	where $C^s_{w^s}$ denotes the cylinder based at $w^s$ in $\Sigma_{A_s}, 1\leq s\leq k$.
\end{rem}

\section{Escape rate for product of maps} \label{sec:prod}
In this section we consider product of expansive Markov maps and discuss the escape rates into different holes. All the notations will be same as in Section~\ref{sec:prelim}. Let $q=N=N_1N_2\dots N_k$. 

For $1\leq s\leq k$, let $w^s=i_1^si_2^s\dots i_{n_s}^s$ be a word of length $n_s$ that is allowed in $\Sigma_{A_s}$, and let $I^s_{w^s}$ denote the interval in $I$ corresponding to the cylinder $C^s_{w^s}$, which is known as a \textit{basic interval}. We will consider a hole $H$ of the following type: 
\[
H=I^1_{w^1}\times I^2_{w^2}\times\dots\times I^k_{w^k}.\]
If $n_1=n_2=\dots=n_k=n$, then consider the word $w=i_1i_2\dots i_n$ where $\phi(i_j)=(i_j^1,i_j^2,\dots,i_j^k)$, $1\leq j\leq n$. Note that in this case, through the conjugacy $\pi$, the hole $H$ corresponds to the cylinder based at $w$ in $\Sigma_A$. Let $R_w$ denote the rectangle corresponds to the cylinder based at $w$ in $\Sigma_A$, known as the \textit{basic rectangle}. Hence in this case, we have $H=R_w$. 

Otherwise, choose $n=\max\{n_1,\dots,n_k\}$ and write $C^s_{w^s}=\bigcup_{|u|=n-n_s}C^s_{w^su}$. For example, consider full shift with symbols from $\{0,1,2\}$, then $C_{01}=C_{010}\cup C_{011}\cup C_{012}$. Now, each of these basic intervals correspond to a union of cylinders based at words of equal length $n$. Hence in this case, $H=\bigcup_w R_w$ where $w$'s are obtained from these new collections of words of length $n$. Thus, in general, a product of basic intervals gives a union of basic rectangles. 

\subsection{Transition matrix corresponding to the forbidden words} \label{sec:matrix}
Let $q\geq 2$ and $\Lambda=\{0,1,\dots,q-1\}$ be a collection of symbols (alphabets). Let $\Sigma=\Sigma_q^+$ be the collection of all one-sided sequences with symbols from $\Lambda$. Let $\mathcal{F}=\{w_1,w_2,\dots,w_t\}$ be a collection of words of same length $n\geq2$ with symbols from $\Lambda$ (if words are of different length, we can replace them by a collection of words of length  $n$ such that the associated subshift of  finite type remains the same). Let $\Sigma_\mathcal{F}$ be the collection of all sequences in $\Sigma$ which do not contain words from a collection $\mathcal{F}$ (that is, words in $\mathcal{F}$ are forbidden in sequences in $\Sigma_\mathcal{F}$). The collection $\Sigma_\mathcal{F}$ is called an \textit{\mbox{$(n-1)$}-step shift of finite type}. \\
Define the \textit{transition matrix} $A$ as a $q^{n-1}\times q^{n-1}$ matrix with entries 0 or 1 constructed as below.\\
When $n=2$, $A_{ij}=0$ if and only if the word $ij\in\mathcal{F}$ (the rows and columns of $A$ are labelled from 0 to $q-1$). When $n>2$, let $\Lambda_n=\{u_1,u_2,\dots,u_{q^{n-1}}\}$ be all the words with symbols from $\Lambda$ of length $n-1$. We will now treat each $u_i$ as a symbol, and hence a total of $q^{n-1}$ symbols. Let $\Sigma_n=\Lambda_n^\mathbb{N}$. Let $u_i=a_1^ia_2^i\dots a_{n-1}^i$. We say that the word $u_iu_j$ of length 2 with symbols from $\Lambda_n$ is allowed if and only if $a_{l+1}^i=a_{l}^j$ for every $1\leq l \leq n-2$ and the word $a_1^ia_2^i\dots a_{n-1}^ia_{n-1}^j\notin \mathcal{F}$. Let $\mathcal{F}'$ be the collection of all words of length two with symbols from $\Lambda_n$ which are not allowed (forbidden). Define the transition matrix $A$ to be the $q^{n-1}\times q^{n-1}$ matrix corresponding to this new collection $\mathcal{F}'$ of forbidden words of length two.\\
If $f(m)$ denotes the number of words of length $m$ in $\Sigma$ that do not contain words from $\mathcal{F}$, then the topological entropy of $\Sigma_\mathcal{F}$ (refer ~\cite[Proposition 3.5]{Pollicott_book}) is given by
\begin{equation}\label{eq:entropy}
	h_{\text{top}}(\Sigma_\mathcal{F})= \lim_{m \to \infty}\frac{1}{m}\ln(f(m))=\ln\left(\lambda_{\text{max}}\right),
\end{equation}
where $\lambda_{\text{max}}>0$ is the largest real (Perron) eigenvalue of $A$ provided $A$ is an irreducible matrix.

\subsection{Maps conjugate to a full shift}\label{sec:fullshift}
Suppose each of the map $f_s$ is conjugate to a full shift $\Sigma_{q_s}^+$, say, for $1\leq s\leq k$, and thus the product map $f=f_1\times f_2\times\dots\times f_k$ is conjugate to the full shift $\Sigma_{q}^+$, where $q=q_1q_2\dots q_k$.

We have seen that, in this case, $\tilde{\mu_s}(I^s_{w^s})=\frac{1}{q_s^{n_s}}$, where $|w^s|=n_s$ ($|w|$ denotes the length of $w$), $1\leq s\leq k$. Generally, let $H=R_{w_1}\cup R_{w_2}\cup\dots\cup R_{w_t}$ and $\tilde{\mu}(R_{w_\ell})=\frac{1}{q^n}$ where $n=|w_1|=|w_2|=\dots=|w_t|$ and $1\leq \ell\leq t$. 

The set $I^k\setminus\Omega_m(H)$ consists of all points in $I^k$ that do not enter the hole $H$ in the first $m$ iterations under $f$. By the conjugacy $\pi:\Sigma_q^+\to I^k$, this corresponds to the set of sequences in $\Sigma_q^+$ that do not enter any of the cylinders $C_{w_1}, C_{w_2},\dots,C_{w_t}$ in the first $m$ iterations under the shift map $\sigma$, that is the sequences in $\Sigma_q^+$ that do not contain the words $w_1,w_2,\dots,w_t$ in its first $m+n$ positions, where $n$ is the length of each of the word. Hence $I^k\setminus\Omega_m(H)$ is the union of rectangles which corresponds to cylinders based at words of length $m+n$ that do not contain $w_1,w_2,\dots,w_t$. Since each of these rectangles have measure $\frac{1}{q^{m+n}}$,   
\[\mu(I^k\setminus\Omega_m(H))=\frac{f(m+n)}{q^{m+n}},\]
where $f(r)$ denote the number of words of length $r$ with symbols from $\Lambda$ that do not contain any of the words $w_1,w_2,\dots,w_t$. Thus, using~\eqref{eq:entropy}, if $\mathcal{G}$ is the collection of words $w_1,w_2,\dots,w_t$, then the escape rate
\begin{eqnarray}
	\notag \rho(H)&=&-\lim_{m \to \infty}\frac{1}{m}\ln 
	\frac{f(m+n)}{q^{m+n}}\\
	\notag &=& \lim_{m \to \infty}\frac{1}{m}\ln q^{m+n}-\lim_{m \to \infty}\frac{1}{m}\ln f(m+n) \\
	&=& \ln(q)-h_{\text{top}}(\Sigma_\mathcal{G}). \label{eq:fullshift}
\end{eqnarray}

\subsection{Maps conjugate to a subshift of finite type} \label{subsec:subshift}
Now we consider the general situation when the maps $f_1,f_2,\dots,f_k$ are conjugate to some subshift of finite type. Hence the product map $f$ is conjugate to a subshift of finite type $\Sigma_M$ with the transition matrix $M$. \\
Let the hole $H\subset I^k$ corresponds to union of cylinders based at some collection of words $\mathcal{F}_1$ of same length $n\geq 2$. Let $\mathcal{F}$ be the collection of all the words of length $n$ that do not appear as sequences in $\Sigma_M$. Then $\Sigma_M=\Sigma_{\mathcal{F}}$.
Further we can assume that $\mathcal{F}\cap\mathcal{F}_1=\emptyset$. Let $A,B$ denote the $q^{n-1}\times q^{n-1}$ transition matrices corresponding to the forbidden words in $\mathcal{F}$ and $\mathcal{F}\cup\mathcal{F}_1$ as defined earlier in this section. If $w=i_1i_2\dots i_m$ is a word that appear in a sequence in $\Sigma_{\mathcal{F}}$ that does not contain any word from $\mathcal{F}_1$ as subword, then  
\[
\mu(C_w)=\frac{u_{i_1}v_{i_m}}{\lambda^{m-1}}a_{i_1i_2}a_{i_2i_3}\dots a_{i_{m-1}i_m}=\frac{u_{i_1}v_{i_m}}{\lambda^{m-1}}b_{i_1i_2}b_{i_2i_3}\dots b_{i_{m-1}i_m},
\]
where $\lambda$ is the largest eigenvalue of $A$ and $v = (v_0,v_1,\dots,v_{q^{n-1}-1})^T$, and $u = (u_0,u_1,\dots,u_{q^{n-1}-1})$ are the normalized right and left eigenvectors of $A$ with respect to the eigenvalue $\lambda$ such that $uv=1$. Hence 
\begin{eqnarray*}
	\tilde{\mu}(I^k\setminus \Omega_m(H))&=&\sum_{w=i_1i_2\dots i_m} \frac{u_{i_1}v_{i_m}}{\lambda^{m-1}}b_{i_1i_2}b_{i_2i_3}\dots b_{i_{m-1}i_m}\\
	&=&\sum_{i,j=0}^{q^{n-1}-1}\frac{u_iv_j}{\lambda^{m-1}}\sum_{i_2i_3\dots i_{m-1}}b_{ii_2}b_{i_2i_3}\dots b_{i_{m-1}j}\\
	&=&\sum_{i,j=0}^{q^{n-1}-1}\frac{u_iv_j}{\lambda^{m-1}}(B^{m-1})_{ij}.
\end{eqnarray*}

\noindent Thus we have the following result.

\begin{thm} \label{thm:subshift}
	With notations as above,
	\begin{equation}\label{eq:subshift}
		\rho(H)= h_{\text{top}}(\Sigma_{\mathcal{F}}) - h_{\text{top}}(\Sigma_{\mathcal{F}\cup \mathcal{F}_1}).
	\end{equation}
\end{thm}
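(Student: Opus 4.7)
The plan is to start from the explicit formula
\[
\tilde{\mu}(I^k \setminus \Omega_m(H)) = \sum_{i,j=0}^{q^{n-1}-1} \frac{u_i v_j}{\lambda^{m-1}} (B^{m-1})_{ij}
\]
derived just before the theorem statement, apply $-\frac{1}{m}\ln$, and split the resulting expression into two limits. Pulling out the factor $\lambda^{-(m-1)}$ yields
\[
-\frac{1}{m}\ln \tilde{\mu}(I^k \setminus \Omega_m(H)) = \frac{m-1}{m}\ln\lambda \;-\; \frac{1}{m}\ln\Bigl(\sum_{i,j} u_i v_j (B^{m-1})_{ij}\Bigr).
\]
The first term tends to $\ln\lambda$, which by~\eqref{eq:entropy} is exactly $h_{\text{top}}(\Sigma_\mathcal{F})$, since $\lambda$ is the Perron eigenvalue of the transition matrix $A$ associated to $\mathcal{F}$.

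The main step is to identify the limit of the second term. Because $u_i$ and $v_j$ are the strictly positive Perron entries of $A$, there exist constants $0 < c_1 \le c_2 < \infty$ with
\[
c_1 \sum_{i,j}(B^{m-1})_{ij} \;\le\; \sum_{i,j} u_i v_j (B^{m-1})_{ij} \;\le\; c_2 \sum_{i,j}(B^{m-1})_{ij},
\]
so the weighted and unweighted sums share the same exponential growth rate. The unweighted sum $\sum_{i,j}(B^{m-1})_{ij}$ counts admissible blocks of length $m$ in the $(n-1)$-step recoding of $\Sigma_{\mathcal{F}\cup\mathcal{F}_1}$, hence applying~\eqref{eq:entropy} to $B$ gives
\[
\lim_{m\to\infty}\frac{1}{m}\ln\Bigl(\sum_{i,j} u_i v_j (B^{m-1})_{ij}\Bigr) = \ln\mu_B = h_{\text{top}}(\Sigma_{\mathcal{F}\cup\mathcal{F}_1}),
\]
where $\mu_B$ denotes the Perron root of $B$. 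Combining the two limits and invoking the definition of $\rho(H)$ in Definition~\ref{escape-rate} then yields~\eqref{eq:subshift}.

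The main obstacle is that the appeal to~\eqref{eq:entropy} for $B$ implicitly requires $B$ to be irreducible, so that Perron--Frobenius actually supplies a real positive dominant eigenvalue whose modulus governs the growth of $B^{m-1}$. In the product setup of Section~\ref{sec:prelim} the matrix $A$ is irreducible (and its Perron entries $u_i, v_j$ are strictly positive), but deleting further edges to form $B$ can break irreducibility when the hole $\mathcal{F}_1$ disconnects the transition graph. In that case one restricts the Perron--Frobenius argument to the strongly connected component carrying the largest eigenvalue of $B$, and the conclusion is unchanged. A minor bookkeeping point is that the exponent on $B$ is $m-1$ rather than $m$, which contributes only a vanishing $O(1/m)$ correction that does not affect either limit.
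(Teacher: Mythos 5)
Your argument is essentially identical to the paper's proof: both start from the same identity $\tilde{\mu}(I^k\setminus\Omega_m(H))=\sum_{i,j}u_iv_j\lambda^{-(m-1)}(B^{m-1})_{ij}$, sandwich the positive weights $u_iv_j$ between two positive constants, and identify the two resulting exponential growth rates with $h_{\text{top}}(\Sigma_{\mathcal{F}})=\ln\lambda$ and $h_{\text{top}}(\Sigma_{\mathcal{F}\cup\mathcal{F}_1})=\lim_m\frac{1}{m}\ln\sum_{i,j}(B^{m-1})_{ij}$ via~\eqref{eq:entropy}. Your added remark about possible reducibility of $B$ addresses a point the paper's proof silently glosses over, but it does not change the route.
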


\begin{proof}
	Since $u, v$ are positive vectors and $uv=1$, there exists $\alpha>0$ such that $\alpha\leq u_iv_j\leq 1$ for all $0\leq i,j\leq q^{n-1}-1$. Hence
	\[
	-\lim_{m \to \infty}\frac{1}{m}\ln\left(\frac{\sum_{i,j=0}^{q^{n-1}-1}(B^{m-1})_{ij}}{\lambda^{m-1}}\right)\leq \rho(H)\leq -\lim_{m \to \infty}\frac{1}{m}\ln\left(\alpha\frac{\sum_{i,j=0}^{q^{n-1}-1}(B^{m-1})_{ij}}{\lambda^{m-1}}\right),
	\]
	which implies $\rho(H)=h_{\text{top}}(\Sigma_{\mathcal{F}})- h_{\text{top}}(\Sigma_{\mathcal{F}\cup \mathcal{F}_1})$.
\end{proof}
\begin{rem}
	In the case of full shift, we have $\mathcal{F}=\phi$ and thus $\Sigma_{\mathcal{F}}=\Sigma_q^+$. Hence we recover~\eqref{eq:fullshift} since $h_{\text{top}}(\Sigma_q^+)=\ln(q)$.
\end{rem}

\noindent Now we discuss some results from combinatorics to understand $f(m)$.

\subsection{Results from combinatorics}\label{sec:comb}

We refer to~\cite{Combinatorial} for details.

\begin{Def}
	We call a collection of words $\{w_1,w_2,\dots,w_k\}$ with symbols from $\Lambda$ \textit{reduced} if $w_i$ is not a subword of $w_j$ for any $i,j\in\{1,2,\dots,k\}$. 
\end{Def}

\begin{Def}
	Let $u$ and $w$ be two words from some symbol set $\Lambda$ of length $n_1$ and $n_2$, respectively. The \textit{correlation function} of $u$ and $w$, $corr(uw)=[b_1,b_2,\dots,b_{n_1}]$ is defined as follows.\\	
	Place a copy of the word $w$ under $u$ and shift it to the right by $\ell$ digits. If the overlapping parts match then $b_{\ell+1}=1$, otherwise $b_{\ell+1}=0$. The \textit{correlation polynomial} is defined as $(uw)_z=b_1.z^{n_1-1}+b_2.z^{n_1-2}+\dots+b_{n_1}$.\\
	When $u\ne w$, $corr(uw)$ and $corr(wu)$ are called \textit{cross-correlations}. Otherwise $corr(ww)$ is said to be the \textit{autocorrelation} of $w$ and is denoted as $corr(w)$, and the corresponding polynomial is called the \textit{autocorrelation polynomial}.
\end{Def}

\begin{exam}
	We give an example to illustrate the concept of correlation polynomial. Consider two words $u=101001$ $w=10010$, then we have
	\begin{center}
		\begin{tabular}{c c c c c c c c c c c }
			$\ell$&&	\textbf{1} &\textbf{0} &\textbf{1} & \textbf{0} & \textbf{0} & \textbf{1}  &&& $b_{\ell+1}$ \\ 
			0&&   1&0&0&1&0& &&&0\\
			1&&	  & 1 &0 & 0 & 1 &0  &&&0\\ 
			2&&	  &&\textbf{1}&\textbf{0}&\textbf{0}&\textbf{1} &&&1\\
			3&&	  &&&1&0&0  &&&0\\
			4&&	  &&&&1&0  &&&0\\
			5&&	  &&&&&\textbf{1} &&&1
		\end{tabular}
	\end{center}
	We get $corr(uw)=001001$ and $(uw)_z=z^3+1$. Similarly $corr(wu)=00010$. Hence in general, $corr(uw)\neq corr(wu)$. Also note that $corr(u)=100001$ and $corr(w)=10010$.
\end{exam}

\begin{rem}
	Let $C_u$ and $C_w$ be the cylinders based at $u$ and $w$, respectively in $\Sigma_q^+$. Let $n$ be the length of $u$. Then $(uw)_z = \sum_{\ell=1}^{n} b_\ell z^{n-\ell}$, where $b_{\ell}=0$, if $\sigma^{\ell-1}(C_u)\cap C_w=\emptyset$ and $b_{\ell}=1$, if $\sigma^{\ell-1}(C_u)\cap C_w\ne\emptyset$. Thus, if $\sigma^{\ell-1}(C_u)\cap C_w=\emptyset$, for all $1\leq \ell\leq n$, then $(uw)_z=0$. Also, $(uu)_z$ is always a monic polynomial with degree $n-1$. 
\end{rem}

\noindent Let $\{w_1,w_2,\dots,w_k\}$ be a reduced collection of words with symbols from $\Lambda$. Let $f(n)$ denote the number of words of length $n$ with symbols from $\Lambda$ which do not contain any of the words $w_1,w_2,\dots,w_k$. The corresponding generating function is given by
\begin{eqnarray*}
	F(z)&:=&\sum_{n=0}^\infty f(n)z^{-n}.
\end{eqnarray*}
For $i=1,2,\dots,k$, let $f_i(n)$ denote the number of words of length $n$ with symbols from $\Lambda$ that end with the word $w_i$, and do not contain any of $w_1,w_2,\dots,w_k$ except for a single appearance of $w_i$ at the end. Let $F_i(z)$ denote the corresponding generating function for $f_i$. That is,
\[
F_i(z)=\sum_{n=0}^\infty f_i(n)z^{-n}.
\]

\noindent The following theorem is a result in combinatorics, given in~\cite[Theorem 1]{Combinatorial}.

\begin{thm} \label{thm:generating}
	For a reduced collection of words $w_1,w_2,\dots,w_k$, the generating function $F(z),F_1(z),\dots, F_k(z)$ satisfy the following system of linear equations
	\begin{eqnarray*}
		(z-q)F(z)+zF_1(z)+zF_2(z)+\dots+z F_k(z)&=& z\\
		F(z)-z(w_1w_1)_zF_1(z)-z(w_2w_1)_zF_2(z)-\dots-z(w_kw_1)_zF_k(z)&=& 0\\
		\vdots\\
		F(z)-z(w_1w_k)_zF_1(z)-z(w_2w_k)_zF_2(z)-\dots-z(w_kw_k)_zF_k(z)&=& 0,
	\end{eqnarray*}
	where $(w_iw_j)_z$ denotes the correlation polynomial of $w_i$ and $w_j$ for all $i,j=1,2,\dots,k$.
\end{thm}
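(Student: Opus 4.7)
The plan is to derive each of the $k+1$ linear equations by establishing a counting identity at the level of the coefficients $f(n), f_i(n)$ and then passing to the associated generating functions via $z^{-n}$-summation. Throughout, reducedness of $\{w_1,\dots,w_k\}$ will be used to exclude configurations where one forbidden word is contained inside another.

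For the first equation, I would start with the observation that every word of length $n+1$ in $\Lambda^*$ is obtained by appending one of $q$ symbols to a word of length $n$. Restricting attention to a word $u$ of length $n$ that avoids all $w_i$, the extension $ua$ either again avoids all $w_i$, or else exhibits exactly one $w_i$ as a suffix (ending at position $n+1$). This yields the recursion $q f(n) = f(n+1) + \sum_{i=1}^k f_i(n+1)$ for $n\ge 0$, with $f(0)=1$ and $f_i(0)=0$. Multiplying by $z^{-n}$, summing over $n\ge 0$, and using $\sum_n f(n+1)z^{-n}=z(F(z)-1)$ and the analogous shift identity for $F_i$, one obtains $(z-q)F(z) + z\sum_i F_i(z) = z$.

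For the $j$-th equation among the remaining $k$, I would set up a bijection that accounts for $f(n)$ in terms of the $f_i$'s. Given $u$ of length $n$ avoiding all $w_i$, append $w_j$ to form $v=uw_j$ and consider the leftmost position in $v$ at which any $w_i$ terminates; because $u$ avoids all $w_i$, this terminal position must lie in $\{n+1,\dots,n+|w_j|\}$. Write that position as $n+\ell$ with $1\le \ell\le |w_j|$, and let $w_i$ be the word appearing there. The prefix $v[1..n+\ell]$ ends in $w_i$ with no earlier occurrence of any $w_{i'}$, so it belongs to $f_i(n+\ell)$, while the suffix $v[n+\ell+1..n+|w_j|]$ is forced to equal the last $|w_j|-\ell$ characters of $w_j$. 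Conversely, any element of $f_i(n+\ell)$ combined with those forced trailing symbols reconstructs a unique $u\in f(n)$, \emph{provided} the overlap region is consistent: the last $\ell$ characters of $w_i$ must equal the first $\ell$ characters of $w_j$. By the definition of the correlation polynomial, this consistency is exactly the statement that the coefficient of $z^{\ell-1}$ in $(w_iw_j)_z$ equals $1$. Summing over $i$ and $\ell$ yields the identity $f(n)=\sum_{i=1}^k\sum_{\ell=1}^{|w_j|}[z^{\ell-1}](w_iw_j)_z\,f_i(n+\ell)$; multiplying by $z^{-n}$, interchanging sums, and reassembling $\sum_\ell [z^{\ell-1}](w_iw_j)_z\,z^{\ell-1}=(w_iw_j)_z$ produces $F(z)=z\sum_{i=1}^k (w_iw_j)_z F_i(z)$, which is the $j$-th equation.

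The delicate point, and the step I expect to require the most care, is ruling out the situation in which the $w_i$ registered at position $n+\ell$ lies \emph{strictly inside} the appended copy of $w_j$ (i.e.\ its starting index $n+\ell-|w_i|+1$ is greater than $n$). If this occurred, $w_i$ would be a proper factor of $w_j$, contradicting reducedness unless $w_i=w_j$ and $\ell=|w_j|$, which falls into the allowed boundary case. An analogous check rules out $w_j$ being a proper prefix of $w_i$. Handling these degenerate overlaps cleanly, together with verifying that the case $\ell=|w_j|$, $i=j$ corresponds correctly to the always-present leading coefficient $b_1=1$ of the autocorrelation $(w_jw_j)_z$, is what makes the bijection well-defined and forces the coefficient matching with $(w_iw_j)_z$. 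Once this is settled, the $k+1$ equations follow directly.
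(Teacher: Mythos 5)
Your argument is correct. The paper does not actually prove Theorem~\ref{thm:generating} --- it quotes it as Theorem~1 of~\cite{Combinatorial} --- and your derivation (the append-a-letter recursion $qf(n)=f(n+1)+\sum_i f_i(n+1)$ for the first equation, and the append-$w_j$ bijection matching the leftmost terminating occurrence against the coefficients of $(w_iw_j)_z$ for the others, with reducedness excluding internal-factor overlaps) is precisely the standard counting proof from that reference, so there is nothing to reconcile.
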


\begin{rem}\label{rem:F}
	1) Let $P$ be the matrix corresponding to the linear system of equations given in Theorem~\ref{thm:generating}, that is,
	\[
	P=
	\begin{bmatrix}
	z-q&z&z&\dots&z\\
	1&-z(w_1w_1)_z&-z(w_2w_1)_z&\dots&-z(w_k w_1)_z\\
	1&-z(w_1 w_2)_z&-z(w_2w_2)_z&\dots&-z(w_k w_2)_z\\
	\vdots&\vdots&\vdots&\vdots&\vdots\\
	1&-z(w_1w_k)_z&-z(w_2w_k)_z&\dots&-z(w_kw_k)_z
	\end{bmatrix}.
	\]
	\noindent Observe that $P$ is non-singular since the diagonal terms have higher order than the other terms. Hence the generating functions $F(z),F_1(z),\dots, F_k(z)$ are uniquely determined by Theorem~\ref{thm:generating}.\\
	2) For $k=1$ in Theorem~\ref{thm:generating}, and $w=w_1$, we get
	\[
	F(z)=\frac{z}{(z-q)+\dfrac{1}{(ww)_z}},\  \text{ and }\ \ F_1(z)=\frac{1}{1+(z-q)(ww)_z}.
	\]
	3) When $k=2$ in Theorem~\ref{thm:generating}, we get 
	\begin{eqnarray} \label{length2}
		F(z)&=& \frac{z}{(z-q)+\frac{(w_1w_2)_z+(w_2w_1)_z-(w_1w_1)_z-(w_2w_2)_z}{(w_1w_2)_z(w_2w_1)_z-(w_1w_1)_z(w_2w_2)_z}}.
	\end{eqnarray}
	In general, the form of the generating function $F(z)$ is described in the next theorem.
\end{rem} 

\begin{thm}\label{thm:formF}
	With notations as above, we have
	\[
	F(z)=\frac{z}{(z-q)+a(z)},
	\]
	where $a(z)$ is the sum of entries of the matrix $M^{-1}$ where $M$ is the \textit{correlation matrix} given by
	\begin{eqnarray}\label{matrixM}
		\begin{bmatrix}
			(w_1w_1)_z&(w_2w_1)_z&\dots&(w_k w_1)_z\\
			(w_1 w_2)_z&(w_2w_2)_z&\dots&(w_k w_2)_z\\
			\vdots&\vdots&\vdots&\vdots\\
			(w_1w_k)_z&(w_2w_k)_z&\dots&(w_kw_k)_z\\
		\end{bmatrix}
	\end{eqnarray}
\end{thm}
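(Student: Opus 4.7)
The plan is to derive the closed form for $F(z)$ directly from the linear system given in Theorem~\ref{thm:generating} by rewriting it in matrix form and solving for $F(z)$ in terms of the sum of the entries of $M^{-1}$.

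First I would rewrite the last $k$ equations of the system in Theorem~\ref{thm:generating} as a single vector equation. Setting $\mathbf{F}(z) = (F_1(z), F_2(z), \ldots, F_k(z))^T$ and letting $\mathbf{1}$ denote the all-ones column vector of length $k$, the $i$-th of these equations reads
\[
F(z) = z\bigl((w_1 w_i)_z F_1(z) + (w_2 w_i)_z F_2(z) + \cdots + (w_k w_i)_z F_k(z)\bigr),
\]
which in matrix form is $F(z)\,\mathbf{1} = z\, M\, \mathbf{F}(z)$, with $M$ the correlation matrix of~(\ref{matrixM}). Next I would argue that $M$ is invertible: this can be seen by noting that each diagonal entry $(w_iw_i)_z$ is a monic polynomial of degree $|w_i|-1$ while the off-diagonal entries $(w_jw_i)_z$ have strictly smaller degree, so $\det M$ is a nonzero polynomial in $z$ and in particular $M$ is invertible as a matrix over the field of rational functions in $z$. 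Hence
\[
\mathbf{F}(z) = \frac{F(z)}{z}\, M^{-1}\mathbf{1}.
\]

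Summing the components, $\sum_{i=1}^k F_i(z) = \mathbf{1}^T \mathbf{F}(z) = \frac{F(z)}{z}\,\mathbf{1}^T M^{-1}\mathbf{1} = \frac{F(z)}{z}\, a(z)$, where $a(z) := \mathbf{1}^T M^{-1}\mathbf{1}$ is precisely the sum of all entries of $M^{-1}$. Substituting this expression into the first equation of the system in Theorem~\ref{thm:generating} gives
\[
(z-q)F(z) + z\cdot\frac{F(z)}{z}\,a(z) = z,
\]
which simplifies to $F(z)\bigl[(z-q) + a(z)\bigr] = z$, and hence $F(z) = \dfrac{z}{(z-q)+a(z)}$, as claimed.

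There is no substantial obstacle here beyond the invertibility of $M$, which is the only nontrivial step; once this is established, the rest is linear algebra. As a sanity check I would verify that the formula reduces to the known cases: for $k=1$, $M = [(ww)_z]$, so $a(z) = 1/(ww)_z$ and one recovers the expression in Remark~\ref{rem:F}(2); for $k=2$, expanding $a(z) = \mathbf{1}^T M^{-1}\mathbf{1}$ using the explicit $2\times 2$ inverse gives
\[
a(z) = \frac{(w_1w_1)_z + (w_2w_2)_z - (w_1w_2)_z - (w_2w_1)_z}{(w_1w_1)_z(w_2w_2)_z - (w_1w_2)_z(w_2w_1)_z},
\]
which matches formula~(\ref{length2}) (up to the sign convention absorbed in the denominator), confirming consistency.
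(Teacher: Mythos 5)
Your proof is correct, and it reaches the formula by a cleaner route than the paper. The paper also starts from the linear system of Theorem~\ref{thm:generating}, but solves it by Cramer's rule: it writes $F(z)=z(P^{-1})_{11}$, identifies the minor $\det\tilde P_{11}=(-z)^k\det M$, and then expands $\det P$ along its first row, reducing the claim to the cofactor identity $a(z)=\frac{1}{\det M}\sum_{i,j}(-1)^{i+j}\det\tilde M_{ij}$ (i.e., the adjugate formula for $M^{-1}$ in disguise). You instead eliminate $\mathbf F(z)$ from the last $k$ equations directly via $F(z)\mathbf 1=zM\mathbf F(z)$ and substitute $\sum_i F_i(z)=\frac{F(z)}{z}\,\mathbf 1^TM^{-1}\mathbf 1$ into the first equation; this bypasses all the determinant and minor bookkeeping and makes it transparent why $F(z)$ depends on $M$ only through the sum of the entries of its inverse. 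What each approach buys: the paper's computation produces the intermediate expression of $a(z)$ as a ratio of cofactor sums to $\det M$, which is what it actually uses in the $k=1,2$ special cases; yours is shorter and isolates the single nontrivial input, namely $\det M\neq 0$. On that point you should note explicitly that your degree argument (diagonal entries monic of degree $|w_i|-1$, off-diagonal entries of strictly smaller degree) relies on the collection being \emph{reduced}: for $j\neq i$ the leading coefficient $b_1$ of $(w_jw_i)_z$ vanishes precisely because neither word is a prefix of the other. With that observation in place your argument is, if anything, more careful than the paper's, which divides by $\det M$ without comment and only asserts the nonsingularity of the larger matrix $P$ in Remark~\ref{rem:F}. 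Your $k=1$ and $k=2$ consistency checks against Remark~\ref{rem:F} and~(\ref{length2}) are also correct, including the sign cancellation in the $k=2$ case.
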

\begin{proof}
	We have
	\begin{eqnarray*}
		\begin{bmatrix}
			F(z)\\F_1(z)\\ \vdots\\ F_k(z)
		\end{bmatrix}
		&=& P^{-1}
		\begin{bmatrix}
			z\\0\\ \vdots \\0
		\end{bmatrix}.
	\end{eqnarray*}
	Thus $F(z)=z(P^{-1})_{11}$. Let $\tilde{P}_{ij}$ denote the $ij$-th minor of $P$, then 
	\begin{eqnarray*}
		(P^{-1})_{11}&=&\frac{1}{\text{det} P} \text{det} \tilde{P}_{11}= \frac{1}{\text{det} P} (-z)^k \text{det} M,
	\end{eqnarray*} 
	where $M$ is the $k\times k$ matrix given in~(\ref{matrixM}). Hence $F(z)=z(-z)^k \frac{\text{det} M}{\text{det} P}$. So the result holds if and only if 
	\begin{eqnarray}\label{eqn:1}
		\frac{z}{(z-q)+a(z)}=z(-z)^k \frac{\text{det} M}{\text{det} P} \iff (-z)^k(z-q)+(-z)^ka(z)=\frac{\text{det} P}{\text{det} M}.
	\end{eqnarray}
	Note that 
	\[
	\text{det} P=(z-q)(-z)^k \text{det} M+z\sum_{j=2}^{k+1}(-1)^{j+1}\text{det} \tilde{P}_{1j},
	\] 
	which gives
	\begin{eqnarray}\label{eqn:2}
		\frac{\text{det} P}{\text{det} M}=(z-q)(-z)^k+z\frac{\sum_{j=2}^{k+1}(-1)^{j+1}\text{det} \tilde{P}_{1j}}{\text{det} M}.
	\end{eqnarray}
	
	\noindent From~\eqref{eqn:1} and~\eqref{eqn:2}, we need to show that 
	\begin{eqnarray}\label{eqn:3}
		a(z)=z\frac{\sum_{j=2}^{k+1}(-1)^{j+1}\text{det} \tilde{P}_{1j}}{(-z)^k \text{det} M}.
	\end{eqnarray}
	Observe that 
	\[
	\text{det} \tilde{P}_{1j}=(-z)^{k-1}\sum_{i=1}^{k}(-1)^{i+1}\text{det} \tilde{M}_{i(j-1)},
	\] 
	for every $2\leq j\leq k+1$. Hence by change of variable $j\to j-1$
	\[
	\sum_{j=1}^{k}(-1)^{j+1}\text{det} \tilde{P}_{1(j-1)}=(-z)^{k-1}\sum_{i,j=1}^{k} (-1)^{i+j} \text{det} \tilde{M}_{ij},
	\]
	and thus 
	\begin{eqnarray*}
		a(z)&=&\frac{1}{\text{det} M} \sum_{i,j=1}^{k}(-1)^{i+j}\text{det} \tilde{M}_{ij} = z\frac{\sum_{j=2}^{k+1}(-1)^{j+1}\text{det} \tilde{P}_{1j}}{(-z)^k \text{det} M}.
	\end{eqnarray*}
	This proves~\eqref{eqn:3} and hence the result follows.
	
\end{proof}
\begin{rem}
	1) From Theorem~\ref{thm:formF}, it is clear that the generating function (hence the escape rate) depends only on the rational function $a(z)$ of correlation polynomials and the size $q$, of the collection of symbols. Thus, if the associated rational function $a(z)$ is the same for two reduced collections $\mathcal{G}_1$ and $\mathcal{G}_2$ of words with symbols from $\Lambda$, then from~\eqref{eq:fullshift}, the escape rates into the holes in $I^k$ corresponding to $\cup_{u\in \mathcal{G}_1}C_u$ and $\cup_{w\in \mathcal{G}_2}C_w$ are the same. \\
	2) We would like to emphasize that the combinatorial  approach described in Section~\ref{sec:comb} is not useful when the number of forbidden words is large. In that case, transition matrix techniques described in Section~\ref{sec:matrix} and~\ref{sec:fullshift} are useful.
\end{rem}
\section{An example of product map on $\mathbb{T}^2$} \label{sec:torus_map}
\noindent Consider the torus $\mathbb{T}^2=S^1\times S^1$, where $S^1\sim [0,1]$ with end-points identified. 
\noindent For integers $M,N\geq2$, define the map $T=T_{M,N}:\mathbb{T}^2\to\mathbb{T}^2$ as 
\begin{eqnarray}\label{eq:map}
	T(x,y) &=& (Mx,Ny)\ \text{mod}\ \mathbb{Z}^2.
\end{eqnarray}
Note that $T=T_M\times T_N$, where $T_m:\mathbb{T}\to\mathbb{T}$ defined as 
\begin{eqnarray*}
	T_m(x) &=& mx\ \text{mod}\ \mathbb{Z},
\end{eqnarray*}
is the usual $m$-expanding transformation on the circle which is measure-preserving and ergodic with respect to Lebesgue measure. The map $T_{M,N}$ is measure-preserving and ergodic on the Borel probability measure space $(\mathbb{T}^2,\B,\mu)$. \\
The map $T_m$ is also an expansive Markov map with Markov partition given by $\left\lbrace \left[ \dfrac{i}{m},\dfrac{i+1}{m}\right)\ \vert\ 0\leq i\leq m-1\right\rbrace$.\\~\\ 
We denote $q:=MN$ unless defined otherwise. As before, $\Lambda=\{0,1,\dots,q-1\}$, and $\Sigma_q^+=\Lambda^\mathbb{N}$. The transition matrix of $T_M$ and $T_N$ has all the entries 1 and hence the transition matrix $A$ of $T_{M,N}$ has all the entries 1. Thus as described in Theorem~\ref{product-factor}, $T_{M,N}$ is conjugate to the shift map $\sigma$ on $\Sigma_q^+$. More precisely, consider a sequence $\alpha=\alpha_1\alpha_2\dots \in \Sigma_{q}^+$. For each $i\geq 1$, $\alpha_i=Na_i+b_i$ such that $0\leq b_i\leq N-1$. This can be done uniquely. Since $0\leq\alpha_i\leq q-1$, we get $0\leq a_i\leq M-1$. Let  

\begin{eqnarray}\label{eq:iden}
	x_\alpha &=& \sum_{i=1}^\infty \frac{a_i}{M^i},\ y_\alpha=\sum_{i=1}^\infty \frac{b_i}{N^i}.
\end{eqnarray}

\noindent Note that $a_i \in \{0,1,\dots,M-1\}$, $b_i \in \{0,1,\dots,N-1\}$, for $i\geq 1$. Associate $(x_\alpha,y_\alpha)\in \mathbb{T}^2$ to the sequence $\alpha\in \Sigma_{q}^+$. Define the map $\pi:\Sigma_{q}^+\to\mathbb{T}^2$ as $\pi(\alpha)=(x_\alpha,y_\alpha)$. It defines a conjugacy between $T$ and $\sigma$ since 
\begin{eqnarray*}
	T\circ\pi(\alpha_1\alpha_2\dots)&=&T\left(\sum_{i=1}^{\infty}\frac{a_i}{M^i},\sum_{i=1}^{\infty}\frac{b_i}{N^i}\right) \\
	&=&\left(\frac{a_2}{M}+\frac{a_3}{M^2}+\dots ,\  \frac{b_2}{N}+\frac{b_3}{N^2}+\dots\right) \\
	&=&\pi(\alpha_2\alpha_3\dots) = \pi\circ\sigma(\alpha_1\alpha_2\dots).
\end{eqnarray*} 

\noindent Consider a word $w$ of length $n$ with symbols from $\Lambda$ and let $\mu$ denote the Parry measure on $\Sigma_q^+$. From Remark~\ref{rem:meas}, 3), $\mu(C_w)=\frac{1}{q^n}$ and the induced measure $\tilde{\mu}$ is same as the Lebesgue measure on $\mathbb{T}^2$. Let us denote $\tilde{\mu}=\mu$. As discussed earlier for the full shift case, the measure of a cylinder depends only on the length of the word. Thus to consider the holes with equal measure, it is enough to consider the words of equal length.

\noindent For $m,n\geq 1$, $0\leq i\leq M^m-1$, $0\leq j\leq N^n-1$, set
\[
R_{i,j,m,n}=\left[\frac{i}{M^m},\frac{i+1}{M^m}\right)\times \left[\frac{j}{N^n},\frac{j+1}{N^n}\right),
\]
where the intervals $\left[\frac{i}{M^m},\frac{i+1}{M^m}\right)$ and $\left[\frac{j}{N^n},\frac{j+1}{N^n}\right)$ corresponds to cylinders based at words of length $m$ in $\Sigma_M^+$ and of length $n$ in $\Sigma_N^+$, respectively. \\ 
Note that $\mu(R_{i,j,m,n})=\frac{1}{M^m}.\frac{1}{N^n}$. As discussed earlier, under the conjugacy $\pi$, this rectangle corresponds to a union of cylinders in $\Sigma_{q}^+$. We will explain this in detail later.\\
Let $R_{i,j,m,n}$ and $R_{i^\prime,j^\prime,m^\prime,n^\prime}$ be two rectangles with same measure. Therefore
\begin{equation}\label{eq:area}
	\frac{1}{M^m}.\frac{1}{N^n}=\frac{1}{M^{m^\prime}}.\frac{1}{N^{n^\prime}}\  \Rightarrow\ M^{m^\prime-m}=N^{n-n^\prime}.
\end{equation}
Note that if $m=m'$, then~\eqref{eq:area} holds when $n=n'$. 

\begin{lemma}
	If $m\ne m'$, then~\eqref{eq:area} will hold only when $M^\alpha=N^\beta$, for some $\alpha,\beta\in\mathbb{N}$ with $\gcd(\alpha,\beta)=1$.
\end{lemma}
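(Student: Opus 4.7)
The plan is to treat this as a short number-theoretic exercise about multiplicative dependence of $M$ and $N$. The statement to prove is that whenever $m \neq m'$, the equality $M^{m'-m} = N^{n-n'}$ forces $M$ and $N$ to satisfy a relation of the form $M^\alpha = N^\beta$ with coprime positive integer exponents.

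First, I would normalize the signs. Since the hypothesis only says $m \neq m'$, I would assume without loss of generality that $m' > m$ (the other case is symmetric, just swap the roles of $(m,n)$ and $(m',n')$). Setting $p := m' - m \geq 1$, the left-hand side $M^p$ is at least $2$ because $M \geq 2$. Hence $N^{n-n'} \geq 2$, and since $N \geq 2$ this forces $n - n' \geq 1$ as well; set $q := n - n' \geq 1$. So from \eqref{eq:area} we obtain the clean equation $M^p = N^q$ with $p, q \in \mathbb{N}$.

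Next I would extract coprime exponents. Let $d := \gcd(p,q)$ and write $p = d\alpha$, $q = d\beta$ with $\gcd(\alpha,\beta) = 1$ and $\alpha,\beta \in \mathbb{N}$. Then $M^p = N^q$ becomes $(M^\alpha)^d = (N^\beta)^d$. Because $M^\alpha$ and $N^\beta$ are positive integers and the function $x \mapsto x^d$ is injective on $\mathbb{R}_{>0}$, this gives $M^\alpha = N^\beta$, which is exactly the conclusion.

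I do not expect any genuine obstacle here — the only point requiring a moment of care is ensuring that both $p$ and $q$ are strictly positive (rather than, say, $n - n' = 0$ or negative), which is why I explicitly invoked $M,N \geq 2$ to deduce $q \geq 1$ from $p \geq 1$. Everything else is immediate from the definition of $\gcd$ and unique prime factorization (or simply the injectivity of $d$-th power on positive reals).
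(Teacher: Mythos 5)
Your proof is correct, and it takes a genuinely different route from the paper. The paper argues via unique prime factorization: writing $M=p_1^{\alpha_1}\cdots p_k^{\alpha_k}$ and $N=q_1^{\beta_1}\cdots q_\ell^{\beta_\ell}$, it shows the two prime sets must coincide, deduces $\alpha_i(m'-m)=\beta_i(n-n')$ for every $i$, and reads off that all the ratios $\alpha_i/\beta_i$ equal the fixed rational $(n-n')/(m'-m)=\alpha/\beta$ in lowest terms. You instead work only with the exponents in \eqref{eq:area}: after normalizing signs to get $M^p=N^q$ with $p,q\geq 1$ (your observation that $p\geq 1$ and $M,N\geq 2$ force $q\geq 1$ is exactly the paper's opening remark that $n\neq n'$), you factor out $d=\gcd(p,q)$ and use injectivity of $x\mapsto x^d$ on $\mathbb{R}_{>0}$ to pass from $(M^\alpha)^d=(N^\beta)^d$ to $M^\alpha=N^\beta$. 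Your argument is shorter and avoids prime factorization entirely; the paper's argument buys slightly more structural information (the exponent vectors of $M$ and $N$ are proportional, so both are powers of a common integer), which it implicitly leans on for the closing converse remark that \eqref{eq:area} holds for all $m,m',n,n'$ with $\beta(n-n')=\alpha(m'-m)$ --- though your identification $\alpha=(m'-m)/d$, $\beta=(n-n')/d$ recovers the same ratio, up to the swap of the roles of $\alpha$ and $\beta$ relative to the paper's labelling (which is immaterial, since the lemma only asserts existence of some coprime pair).
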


\begin{proof}
	If~\eqref{eq:area} holds, then $n\ne n'$.\\
	Suppose $M=p_1^{\alpha_1}\dots p_k^{\alpha_k}$ and $N=q_1^{\beta_1}\dots q_\ell^{\beta_\ell}$ be the prime factorizations of $M$ and $N$ with $1<p_1<p_2<\dots <p_k$ and $1<q_1<q_2<\dots <q_\ell$. Hence
	\[
	\left(p_1^{\alpha_1}\dots p_k^{\alpha_k}\right)^{m'-m} = \left(q_1^{\beta_1}\dots q_\ell^{\beta_\ell}\right)^{n-n'}.
	\]
	If $p_j\notin\{q_1,q_2,\dots, q_\ell\}$, then $\alpha_j(m'-m)=0$ which implies $m=m'$, which is a contradiction. Hence $\{p_1,\dots,p_k\}\subseteq \{q_1,\dots, q_\ell\}$. Similarly we can show the reverse containment. Hence $\{p_1,\dots,p_k\}= \{q_1,\dots, q_\ell\}$ and therefore $k=\ell$ and $p_i=q_i$ for all $i$. Thus,
	\[
	\prod_{i=1}^k p_i^{\alpha_i(m'-m)-\beta_i(n-n')} = 1.
	\]
	Thus $\alpha_i(m'-m)-\beta_i(n-n')=0$, for all $i$. Since $m\ne m'$ and $n\ne n'$, we see that $\dfrac{\alpha_i}{\beta_i}=\dfrac{n-n'}{m'-m}=\dfrac{\alpha}{\beta}$, say, for all $i$, where $\gcd(\alpha,\beta)=1$. Hence $M^\alpha=N^\beta$. Further, for all $m, m', n, n'$ with $\beta(n-n')=\alpha(m'-m)$,~\eqref{eq:area} holds.
\end{proof}

\subsection{Computation of escape rate} We will only consider the case when $M^\alpha\ne N^\beta$, for any $\alpha,\beta\in\mathbb{N}$, and thus the rectangles with same measure are given by $R_{i,j,m,n}$ and $R_{i^\prime,j^\prime,m,n}$ for $0\leq i,i^\prime \leq M^m-1$ and $0\leq j,j^\prime\leq N^n-1$. We consider the map $T=T_{M,N}$ on $\mathbb{T}^2$ with a hole given by rectangle of this type (we refer to Definition~\ref{escape-rate}). We look at the following two cases: when $m=n$ and $m>n$ (similar argument works for $m<n$).
\subsubsection{Case 1: $\mathbf{m=n}$}
Under the conjugacy $\pi$ described above, each point in the rectangle $R_{i,j,m,m}$ has a corresponding sequence in $\Sigma_{q}^+$ which starts with a fixed word, say $w$, of length $m$. Thus the rectangle $R_{i,j,m,m}$ corresponds to the cylinder $C_w$ based at $w$ in $\Sigma_{q}^+$ as described in Figure~\ref{fig:1}. That means $R_{i,j,m,m}$ is a basic rectangle. Hence  
\[
\mu(\mathbb{T}^2\setminus\Omega_k(R_w))=\frac{f(k+m)}{q^{k+m}},
\]
where $f(k)$ denote the number of words of length $k$ with symbols from $\Lambda$ that do not contain the word $w$. This implies
\[\rho(R_{i,j,m,m})=\rho(R_w)=-\lim_{k \to \infty}\frac{1}{k}\ln 
\frac{f(k+m)}{q^{k+m}}.\]
This case is same as the one discussed in ~\cite{BY}. Hence we have the following result from~\cite[Theorem 4.5.3]{BY}.

\begin{thm}\label{thm:tau_rho}
	Let $R_{i,j,m,m}$ and $R_{i^\prime,j^\prime,m,m}$ be two holes of the same measure. If $\tau(R_{i,j,m,m})<\tau(R_{i^\prime,j^\prime,m,m})$ then 
	\[
	\rho(R_{i,j,m,m})<\rho(R_{i^\prime,j^\prime,m,m}).\]
\end{thm}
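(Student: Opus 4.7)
The plan is to lift the problem to the symbolic setting and compare the Perron eigenvalues of the associated transition systems. By the conjugacy $\pi$ of Theorem~\ref{product-factor}, each rectangle $R_{i,j,m,m}$ corresponds to a single cylinder $C_w\subset\Sigma_q^+$ based at a word $w$ of length $m$, with $q=MN$. Since $\pi$ is measure preserving and intertwines $T_{M,N}$ with the shift $\sigma$, both $\tau$ and $\rho$ are preserved under this identification, so it suffices to establish the purely symbolic statement: for length-$m$ words $w, w'$ over $\Lambda=\{0,\dots,q-1\}$, if $\tau(C_w)<\tau(C_{w'})$ then $\rho(C_w)<\rho(C_{w'})$.

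Next I would translate both sides into combinatorial data attached to $w$ and $w'$. The single-hole case of Theorem~\ref{thm:generating} (see Remark~\ref{rem:F}(2)) gives
\[
F_w(z)=\frac{z\,(ww)_z}{(z-q)(ww)_z+1},
\]
so analyzing the dominant pole of this rational generating function, together with~\eqref{eq:fullshift} and~\eqref{eq:entropy}, yields $\rho(C_w)=\ln q-\ln\lambda_w$, where $\lambda_w$ is the largest real root of $p_w(z):=(z-q)(ww)_z+1$. On the other hand, $\tau(C_w)$ is read off the autocorrelation polynomial: $\sigma^{-\ell}(C_w)\cap C_w$ has positive measure exactly when $w$ overlaps itself compatibly after a shift of $\ell$ positions, so $\tau(C_w)$ equals the smallest $\ell\geq 1$ with $b_{\ell+1}=1$ in $(ww)_z$ (with the convention $\tau(C_w)=m$ when no such $\ell<m$ exists). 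In these terms the theorem becomes: an earlier nontrivial self-overlap of $w$ forces $\lambda_w>\lambda_{w'}$.

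Third, I would compare the two Perron roots by evaluating $p_w$ at $\lambda_{w'}$. Using the defining equation $(q-\lambda_{w'})(w'w')_{\lambda_{w'}}=1$,
\[
p_w(\lambda_{w'})=1-\frac{(ww)_{\lambda_{w'}}}{(w'w')_{\lambda_{w'}}}.
\]
Applied to the irreducible non-negative transition matrix of Section~\ref{sec:matrix}, Perron-Frobenius ensures that $\lambda_w$ is the unique real root of $p_w$ in $(0,q)$ and that $p_w>0$ on $(\lambda_w,q]$; hence $p_w(\lambda_{w'})<0$ forces $\lambda_{w'}<\lambda_w$. Thus the theorem reduces to the inequality $(ww)_{\lambda_{w'}}>(w'w')_{\lambda_{w'}}$. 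Writing $(ww)_z=\sum_{\ell=0}^{m-1}b^{w}_{\ell+1}\,z^{m-1-\ell}$ and similarly for $w'$, the hypothesis $\tau(w)<\tau(w')$ guarantees that $(ww)_z$ carries a $z^{m-1-\tau(w)}$ term which $(w'w')_z$ is missing, while all other coefficients of both polynomials lie in $\{0,1\}$.

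The main obstacle is controlling these remaining coefficients, since $(w'w')_z$ may well carry ones at positions below $z^{m-1-\tau(w')}$ that $(ww)_z$ lacks. The estimate I would push is that $\lambda_{w'}$ lies so close to $q$ (from $q-\lambda_{w'}=1/(w'w')_{\lambda_{w'}}$ and $(w'w')_q\geq q^{m-1}$ one gets $q-\lambda_{w'}=O(q^{-(m-1)})$) that the extra leading contribution $\lambda_{w'}^{m-1-\tau(w)}$ strictly dominates the worst-case deficit $\sum_{k>\tau(w)}\lambda_{w'}^{m-1-k}$ from the remaining coefficients. This is precisely the content of~\cite[Theorem 4.5.3]{BY} in the full-shift setting, and since $T_{M,N}$ is metrically conjugate to the full shift on $q$ symbols, one can apply that result directly via the conjugacy invariance of escape rate recorded in~\cite[Lemma 2.3.5]{BY}.
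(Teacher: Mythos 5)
Your argument is correct and ultimately takes the same route as the paper: under the conjugacy $\pi$ the hole $R_{i,j,m,m}$ is a single cylinder $C_w$ in the full shift $\Sigma_q^+$, and the statement is then exactly~\cite[Theorem 4.5.3]{BY}, which the paper simply cites. Your additional sketch of that theorem's internals (dominant root of $(z-q)(ww)_z+1$, reading $\tau$ off the autocorrelation, and the coefficient comparison at $\lambda_{w'}$) is sound — indeed since $\lambda_{w'}\in(q-1,q)$ with $q\geq 4$ the crude bound $\lambda_{w'}^{m-1-\tau(w)}>\sum_{k>\tau(w)}\lambda_{w'}^{m-1-k}$ already closes the gap you flag — though your claim that $\lambda_w$ is the \emph{unique} real root of $p_w$ in $(0,q)$ is not needed and is in general false (e.g.\ $z^m-qz^{m-1}+1$ has a second positive root near $0$); what you actually use is only that $\lambda_w$ is the largest real root.
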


\begin{figure}[h!]
	\centering
	\includegraphics[width=.7\textwidth]{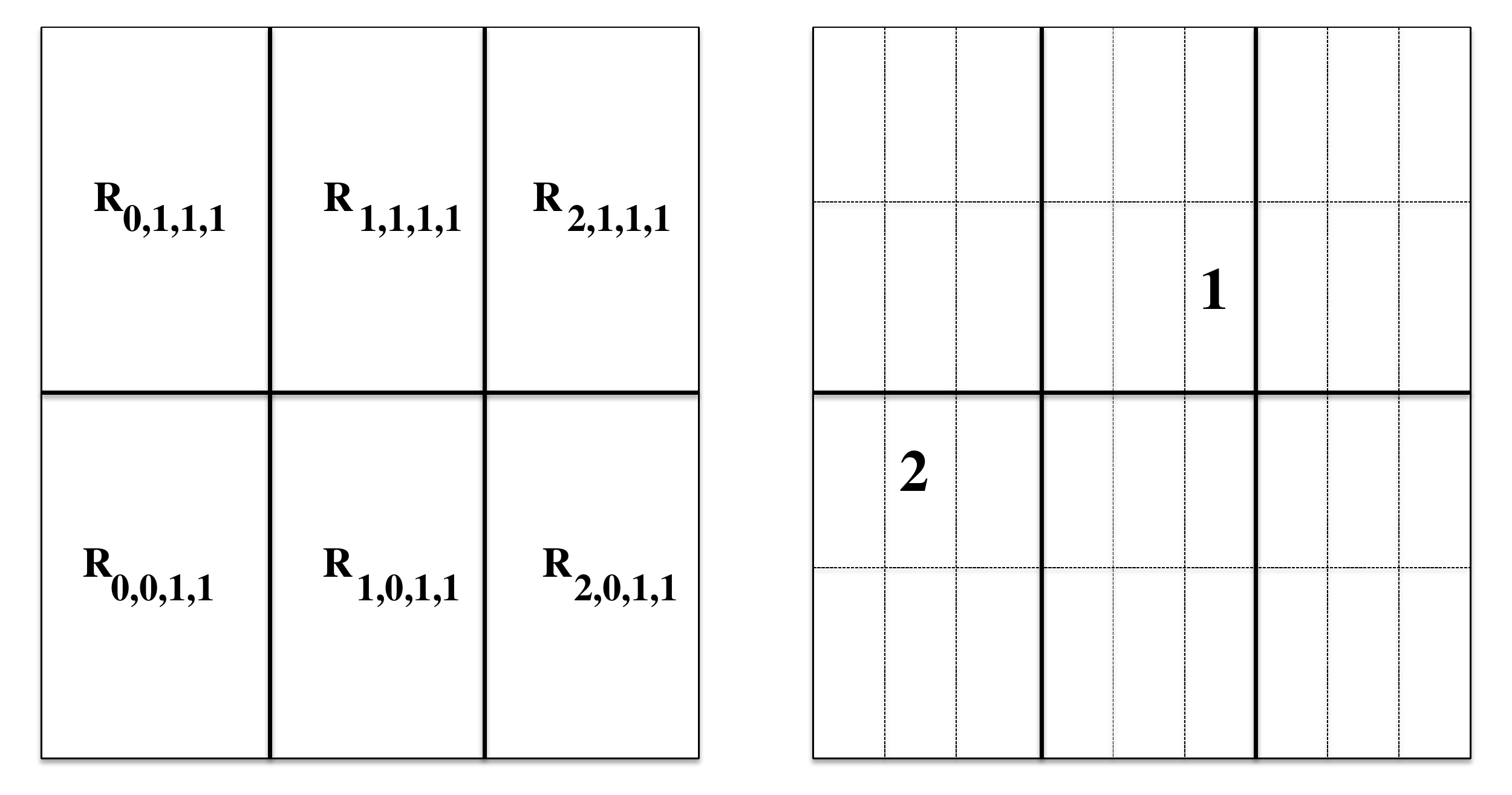}
	\caption{Basic rectangles when $M=3$, $N=2$ and (left) $m=n=1$; $R_{i,j,1,1}\sim C_{2i+j}$ and
		(right) $m=n=2$; (1) $R_{5,2,2,2}\sim C_{34}$, (2) $R_{1,1,2,2}\sim C_{03}$.}
	\label{fig:1}
\end{figure}

\subsubsection{Case 2: $\mathbf{m>n}$}
We write 
\begin{eqnarray*}
	R_{i,j,m,n}&=&\left[\frac{i}{M^m},\frac{i+1}{M^m}\right)\times \left[\frac{j}{N^n},\frac{j+1}{N^n}\right)\\
	&=&\left[\frac{i}{M^m},\frac{i+1}{M^m}\right)\times \left[\frac{j N^{m-n}}{N^m},\frac{jN^{m-n}+1}{N^m}\right) \cup\\
	&&\left[\frac{i}{M^m},\frac{i+1}{M^m}\right)\times \left[\frac{j N^{m-n}+1}{N^m},\frac{jN^{m-n}+2}{N^m}\right)\cup \dots \cup\\
	&&\left[\frac{i}{M^m},\frac{i+1}{M^m}\right)\times \left[\frac{j N^{m-n}+N^{m-n}-1}{N^m},\frac{jN^{m-n}+N^{m-n}}{N^m}\right).
\end{eqnarray*}

\begin{figure}[h!]
	\centering
	\includegraphics[width=0.6\textwidth]{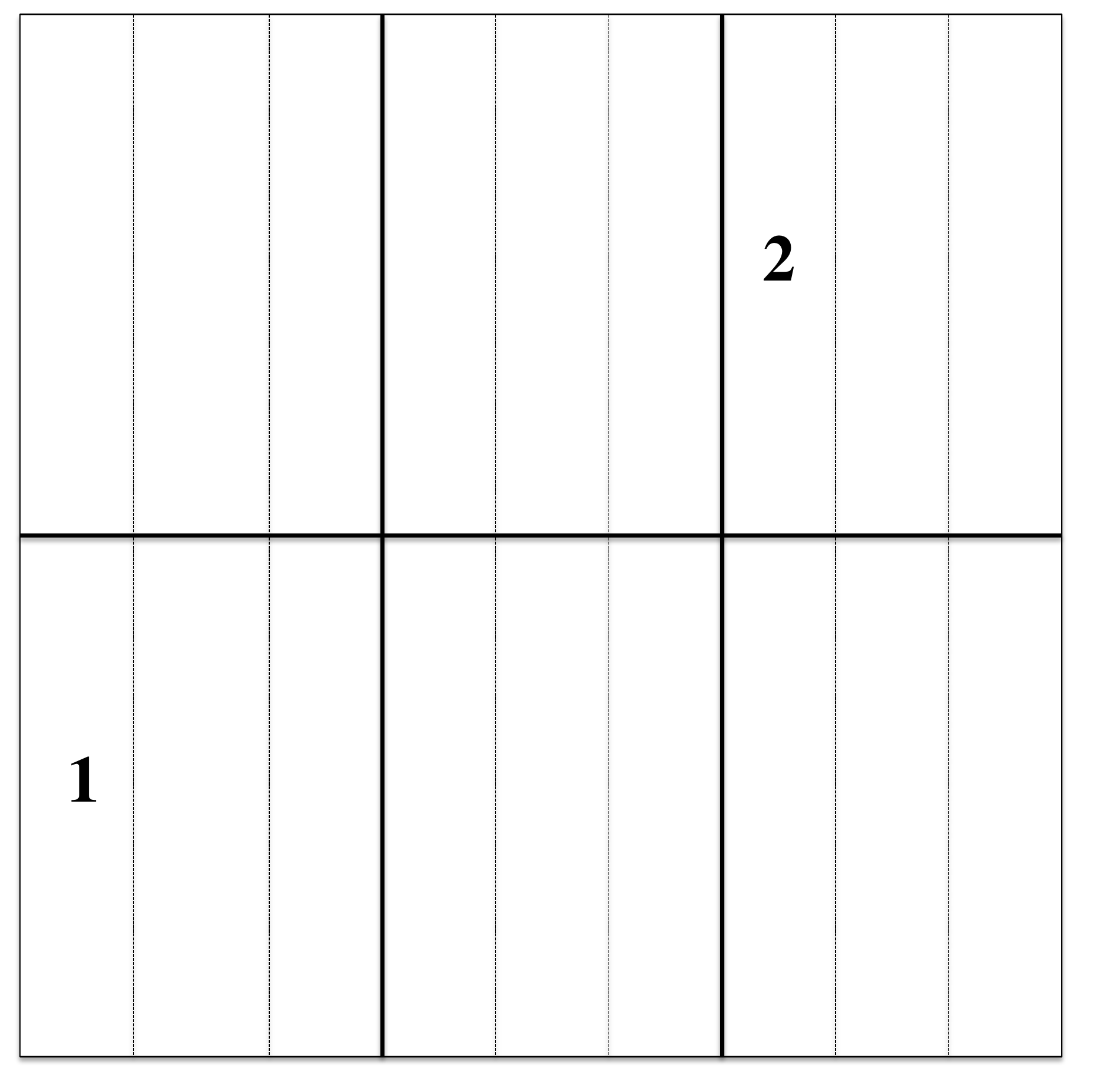}
	\caption{$M=3,N=2,m=2,n=1$, each rectangle is a union of two basic rectangles:\\
		(1) $R_{0,0,2,1}=R_{0,0,2,2}\cup R_{0,1,2,2}\sim C_{00}\cup C_{01}$, \\
		(2) $R_{6,1,2,1}=R_{6,2,2,2}\cup R_{6,3,2,2}\sim C_{50}\cup C_{51}$.}
	\label{fig:aa}
\end{figure}

There are $N^{m-n}$ rectangles in this union, each of which is a basic rectangle of the form $R_{k,l,m,m}$ for some $0\leq k,l\leq M^m-1$. For each rectangle $R_{k,l,m,m}$ we get a corresponding word of length $m$ such that the cylinder based at that word corresponds to the rectangle $R_{k,l,m,m}$. Hence the rectangle $R_{i,j,m,n}$ has a correspondence with the union of $N^{m-n}$ cylinders (each of measure $1/q^m$) given by
\[
C_{uw_1}\cup C_{uw_2}\cup\dots\cup C_{uw_{N^{m-n}}},
\]
where $u$ is a fixed word of length $n$, and $w_1,\dots,w_{N^{m-n}}$ each have length $m-n$. See Figure~\ref{fig:aa} for an example. Similar to previous case,  
\[
\rho(R_{i,j,m,n})=-\lim_{k \to \infty}\frac{1}{k}\ln 
\frac{f(k+m)}{q^{k+m}},
\]
where $f(k)$ is the number of words of length $k$ with symbols from $\Lambda$ that do not contain the words $uw_1,\dots,uw_{N^{m-n}}$.\\~\\
To calculate $f(k)$, we use results from Section~\ref{sec:matrix},~\ref{sec:fullshift} and~\ref{sec:comb}. We will illustrate this with a simple example in Section~\ref{sec:example}. 

\begin{rem}
	Note that the results that will be discussed in Section~\ref{sec:example} and Section~\ref{sec:compare} can be generalized to any map which is conjugate to a full shift. This happens since the properties used from now on do not depend on the geometric nature of the map. However when the product of maps is considered, and hole being the product of basic holes from each component maps, the corresponding holes in symbol space become union of cylinders unlike the case for the individual maps. 
\end{rem}

\subsection{Calculation of escape rate - An example} \label{sec:example}
\noindent Let us consider the map $T=T_{3,2}:\mathbb{T}^2\to \mathbb{T}^2$ given by $T(x,y)=(3x,2y)\ \text{mod}\ \mathbb{Z}^2$.
\begin{exam}
	We consider the case when $m=n$. As described earlier, the rectangle $R_{i,j,m,m}$ corresponds to a cylinder $C_w$. Thus we are looking for sequences where a single word $w$ is forbidden. From Theorem~\ref{thm:formF}, the corresponding generating function $F$ only depends on the autocorrelation of the word $w$. For the following examples, we calculate the largest eigenvalue of the corresponding transition matrix to find the escape rate. See Figure~\ref{aa} for the following two cases. \\
	1) In case $m=2$, each hole will correspond to a cylinder based at a word $w=ab$ of length 2. The escape rate into the holes where $a=b$ will be the same (in this case $(ww)_z=z+1$), and the escape rate into the holes where $a\ne b$ will be the same (in this case $(ww)_z=z$). By Theorem~\ref{thm:tau_rho}, since $\tau(R_{aa})<\tau(R_{ab})$ ($a\ne b$), $\rho(R_{aa})<\rho(R_{ab})$. Here $\rho(R_{aa})=-\ln\left(\dfrac{5+3\sqrt{5}}{12}\right)\sim 0.025$ and $\rho(R_{ab})=-\ln\left(\dfrac{3+2\sqrt{2}}{6}\right)\sim 0.029$.\\
	2) In case $m=3$, each hole will correspond to a cylinder based at a word $w=abc$ of length 3. Three values of escape rate are possible here:
	\begin{enumerate}
		\item[a)] When $a=b=c$ (in this case $(ww)_z=z^2+z+1$).
		\item[b)] When $a=c$ but $a\ne b$ (in this case $(ww)_z=z^2+1$).
		\item[c)] When either $a\ne b$ or $b\ne c$ and also $a\ne c$ (in this case $(ww)_z=z^2$).\\
	\end{enumerate}
	By Theorem~\ref{thm:tau_rho}, since $\tau(R_{aaa})<\tau(R_{aba})<\tau(R_{abc})$ ($a\ne b$, $b\ne c$, $c\ne a$), $\rho(R_{aaa})<\rho(R_{aba})<\rho(R_{abc})$. Here $\rho(R_{aaa})\sim 0.0039, \ \rho(R_{aba})\sim 0.0046$ and $\rho(R_{abc})\sim 0.0047$.
	\begin{figure}[h!]
		\centering
		\includegraphics[width=0.8\textwidth]{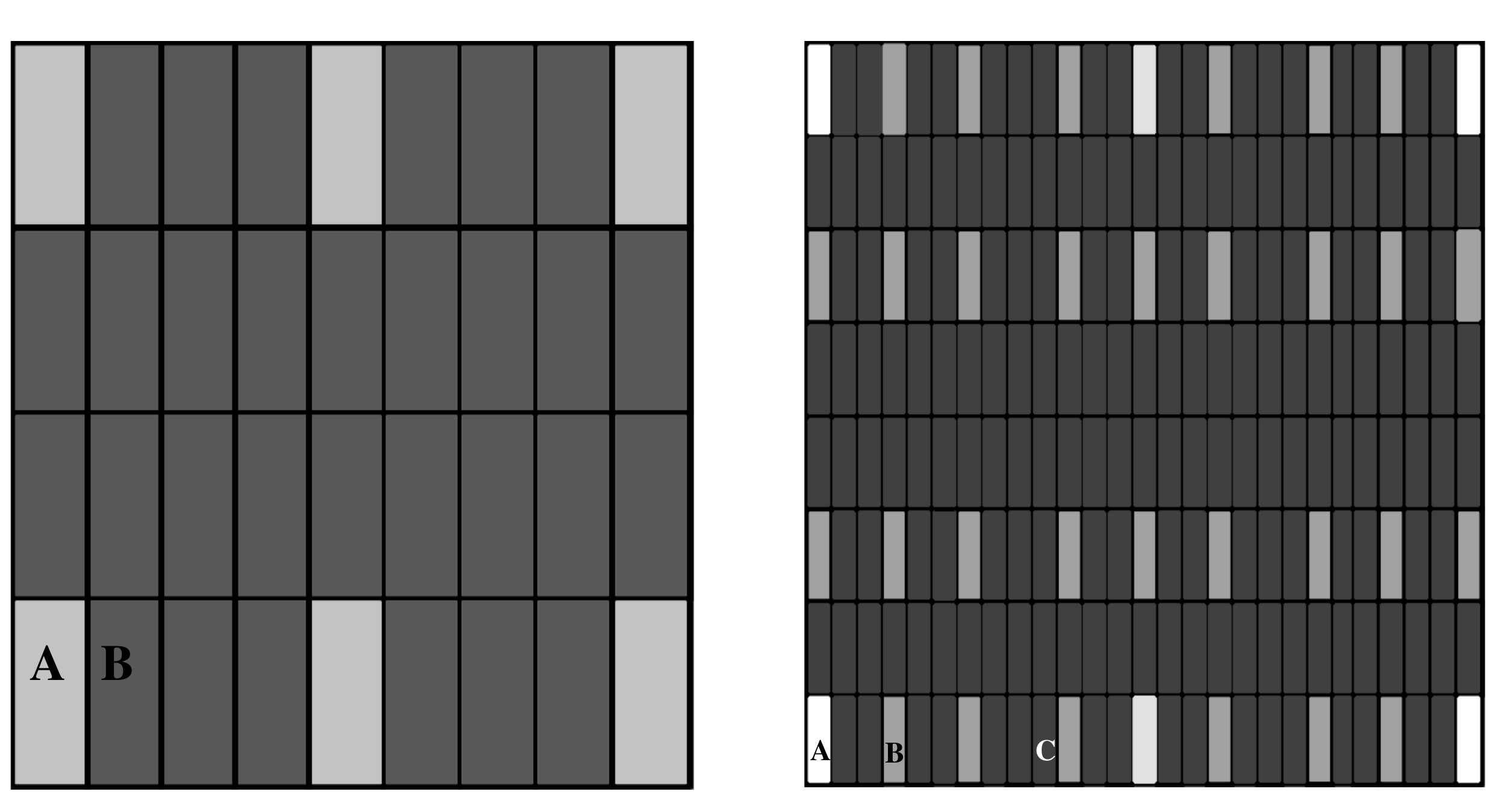}
		\caption{Escape rates into basic rectangles for $M=3, N=2$: (left) $m=n=2$; $\rho(A)\sim 0.025$, $\rho(B)\sim 0.029$;\\
			(right) $m=n=3$; $\rho(A)\sim 0.0039$, $\rho(B)\sim 0.0046$, $\rho(C)\sim 0.0047$.}
		\label{aa}
	\end{figure}
\end{exam}

\begin{exam}
	Let us consider the rectangles $A=\left[0,\frac{1}{9}\right)\times \left[0,\frac{1}{2}\right)$ and $B=\left[\frac{2}{9},\frac{1}{3}\right)\times \left[0,\frac{1}{2}\right)$. These rectangles are of the form $R_{i,j,m,n}$ where $m>n$. \\
	We now calculate the escape rates $\rho(A)$ and $\rho(B)$ using two methods, as described in Section~\ref{sec:matrix} and Section~\ref{sec:comb}. \\
	
	\noindent\textbf{Method 1 (Combinatorics):}
	Since $M=3$ and $N=2$, we have $q=6$ and thus we will consider the collection of one-sided sequences $\Sigma^+_6$ consisting of symbols from the set $\Lambda=\{0,1,\dots,5\}$. 
	
	For the given $A,B$, we have $m=2$ and $n=1$. Hence each rectangle corresponds to a union of two cylinders based at words of length two with the same first letter. We have $A=R_{w_1}\cup R_{w_2}$, where $w_1=00$ and $w_2=01$; and $B=R_{u_1}\cup R_{u_2}$, where $u_1=04$ and $u_2=05$.
	
	Let $f(k)$ denote the number of words of length $k$ with symbols from $\Lambda$, which do not contain the words $w_1$ and $w_2$ and let $F(z)$ denote the corresponding generating function. Similarly let $g(k)$ denote the number of words of length $k$ with symbols from $\Lambda$, which do not contain the words $u_1$ and $u_2$ and let $G(z)$ denote the corresponding generating function. We calculate the following quantities:
	\[
	(w_1w_1)_z=z+1;\ \ (w_1w_2)_z=1;\ \ (w_2w_1)_z= 0;\ \ (w_2w_2)_z=z
	\]
	\[
	(u_1u_1)_z=z;\ \ (u_1u_2)_z=0;\ \ (u_2u_1)_z=0;\ \ (u_2u_2)_z=z.
	\]
	Using~\eqref{length2}, we get
	\[
	F(z)=\frac{z^2+z}{z^2-5z-4}\ \ \text{and} \ \ G(z)=\frac{z^2}{z^2-6z+2}.
	\]
	Now we will calculate the escape rate into $A$. Write 
	\begin{eqnarray*}
		F(z)&=& \frac{z^2+z}{z^2-5z-4}=\sum_{k=0}^\infty f(k)z^{-k}\\
		\implies z^2+z&=& (z^2-5z-4)\left(\sum_{k=0}^\infty f(k)z^{-k}\right).
	\end{eqnarray*}
	Let $f_k=f(k)$. Then comparing the coefficients of $z^{-k}$, we get the following recurrence relation
	\begin{equation} \label{eq:8}
		f_{k+2}=5f_{k+1}+4f_k;\ \ f_0=1;\ \ f_1=6;\ \  k\geq 0.
	\end{equation} 
	We use the standard procedure for solving the linear recurrence relation by taking $f_k=r^k$. Then~\eqref{eq:8} gives the characteristic equation $r^2-5r-4=0$ whose roots are given by $\mu_{\pm}=\frac{5\pm \sqrt{41}}{2}$. Hence the general solution for~\eqref{eq:8} is given by 
	\[
	f_k=c\mu_+^k+d\mu_-^k, \ \ k\geq 0.
	\] 
	Using the initial conditions, $c+d=1$ and $c\mu_++d\mu_-=6$ gives 
	\[
	c=\frac{\mu_--6}{\mu_--\mu_+} \ \ \text{ and }\ \  d=\frac{6-\mu_+}{\mu_--\mu_+}.
	\]
	Hence 
	\[
	f_k=\frac{7\sqrt{41}+41}{82}\mu_+^k+\frac{41-7\sqrt{41}}{82}\mu_-^k,\ \ k\geq 0.
	\]
	Now 
	\begin{eqnarray*}
		\rho(A)&=&-\lim_{k \to \infty}\frac{1}{k}\ln \frac{f(k+2)}{6^{k+2}} = -\lim_{k \to \infty}\frac{1}{k}\ln \frac{\frac{7\sqrt{41}+41}{82}\mu_+^{k+2}+\frac{41-7\sqrt{41}}{82}\mu_-^{k+2}}{6^{k+2}}.
	\end{eqnarray*}
	Since $|\mu_-|<1$, $\ \mu_-^{k+2}$ goes to zero as $k$ tends to infinity. Hence one can neglect the second term. This gives 
	\begin{eqnarray*}
		\rho(A)&=&-\lim_{k \to \infty}\frac{1}{k}\ln \frac{(7\sqrt{41}+41) \mu_+^{k+2}}{82\times 6^{k+2}} = -\lim_{k \to \infty}\frac{1}{k}\ln \frac{\mu_+^{k+2}}{6^{k+2}} \\
		&=& -\lim_{k \to \infty}\frac{1}{k}\ln \frac{\mu_+^k}{6^{k+2}}=-\ln \frac{\mu_+}{6}=-\ln \left(\frac{5+\sqrt{41}}{12}\right).
	\end{eqnarray*}
	
	\noindent Similarly we get $\rho(B)=-\ln \left(\frac{3+\sqrt{7}}{6}\right)$. Note that $\rho(A)<\rho(B)$. Hence we have two holes with the same measure but different escape rates. 
	
	For $M=3,N=2,m=2$ and $n=1$, there are total $M^m.N^n=18$ rectangles of the type $R_{i,j,m,n}$ with measure $\frac{1}{18}$. Following are the corresponding words that represent each of the possible $R_{i,j,2,1}$: 
	\begin{center}
		\begin{tabular}{ c c c c c c }
			00\&01 & 02\&03 & 04\&05 & 10\&11 & 12\&13 & 14\&15  \\ 
			20\&21 & 22\&23 & 24\&25 & 30\&31 & 32\&33 & 34\&35  \\
			40\&41 & 42\&43 & 44\&45 & 50\&51 & 52\&53 & 54\&55.  \\
		\end{tabular}
	\end{center}
	
	Note that for any pair of words in the above list, the correlations are either equal to the correlations obtained for the hole $A$ (this happens when the pair of words are of the form $ab$ and $ac$ where either $a=b$ or $a=c$) or equal to the correlations obtained for the hole $B$ (this happens when the pair of words are $ab$ and $ac$ where both $a\neq b$ and $a\neq c$). Since~\eqref{length2} depends only on the correlation of the words involved, we conclude that escape rate into any hole of the form $R_{i,j,m,n}$ exists for $M=3,N=2,m=2$ and $n=1$, and the value $\rho(R_{i,j,m,n})$ is either $\rho(A)$ or $\rho(B)$.\\
	
	\begin{figure}[h!]
		\centering
		\includegraphics[width=.5\textwidth]{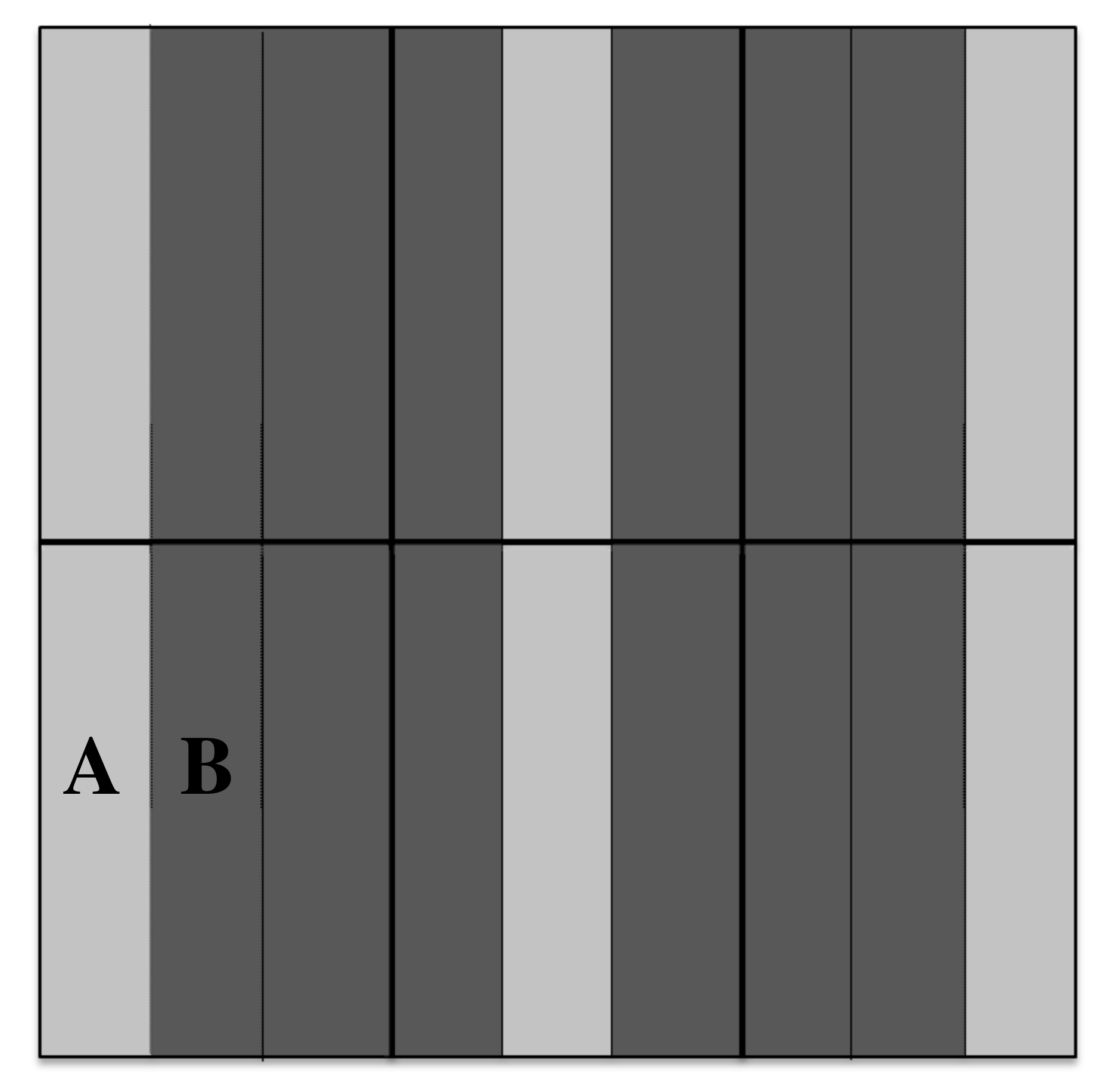}
		\caption{Escape rates of rectangles when $M=3$, $N=2$, $m=2$, $n=1$, $\rho(A)\sim 0.051$, $\rho(B)\sim 0.061$.}
		\label{fig:3}
	\end{figure}
	
	\noindent\textbf{Method 2 (Transition Matrix):}
	Let $P$ and $Q$ denote the transition matrix when the words $w_1=00$, $w_2=01$ are forbidden and when the words $u_1=04$, $u_2=05$ are forbidden, respectively. Then 
	\[
	P= \begin{bmatrix}
	0&0&1&1&1&1\\
	1&1&1&1&1&1\\
	1&1&1&1&1&1\\
	1&1&1&1&1&1\\
	1&1&1&1&1&1\\
	1&1&1&1&1&1\\
	\end{bmatrix}, \ \ Q= \begin{bmatrix}
	1&1&1&1&0&0\\
	1&1&1&1&1&1\\
	1&1&1&1&1&1\\
	1&1&1&1&1&1\\
	1&1&1&1&1&1\\
	1&1&1&1&1&1\\
	\end{bmatrix}
	\]
	which gives $\lambda_{\text{max}}^P=\frac{5+\sqrt{41}}{2}$ and $\lambda_{\text{max}}^Q=3+\sqrt{7}$. Hence $\rho(A)=-\ln \frac{\lambda_{\text{max}}^P}{6}$ and $\rho(B)=-\ln \frac{\lambda_{\text{max}}^Q}{6}$, which gives the same result as calculated using Method 1.
	
	Using the same method the escape rate into any rectangle of the type $R_{i,j,m,n}$ can be computed. Clearly if $m-n$ is large then the number of forbidden words is large, and thus the computation of escape rate becomes harder. In Section~\ref{sec:compare}, we give a particular example where the collection of forbidden words is large and satisfies a certain property.	
\end{exam}

\begin{figure}[h!]
	\centering
	\includegraphics[width=.7\textwidth]{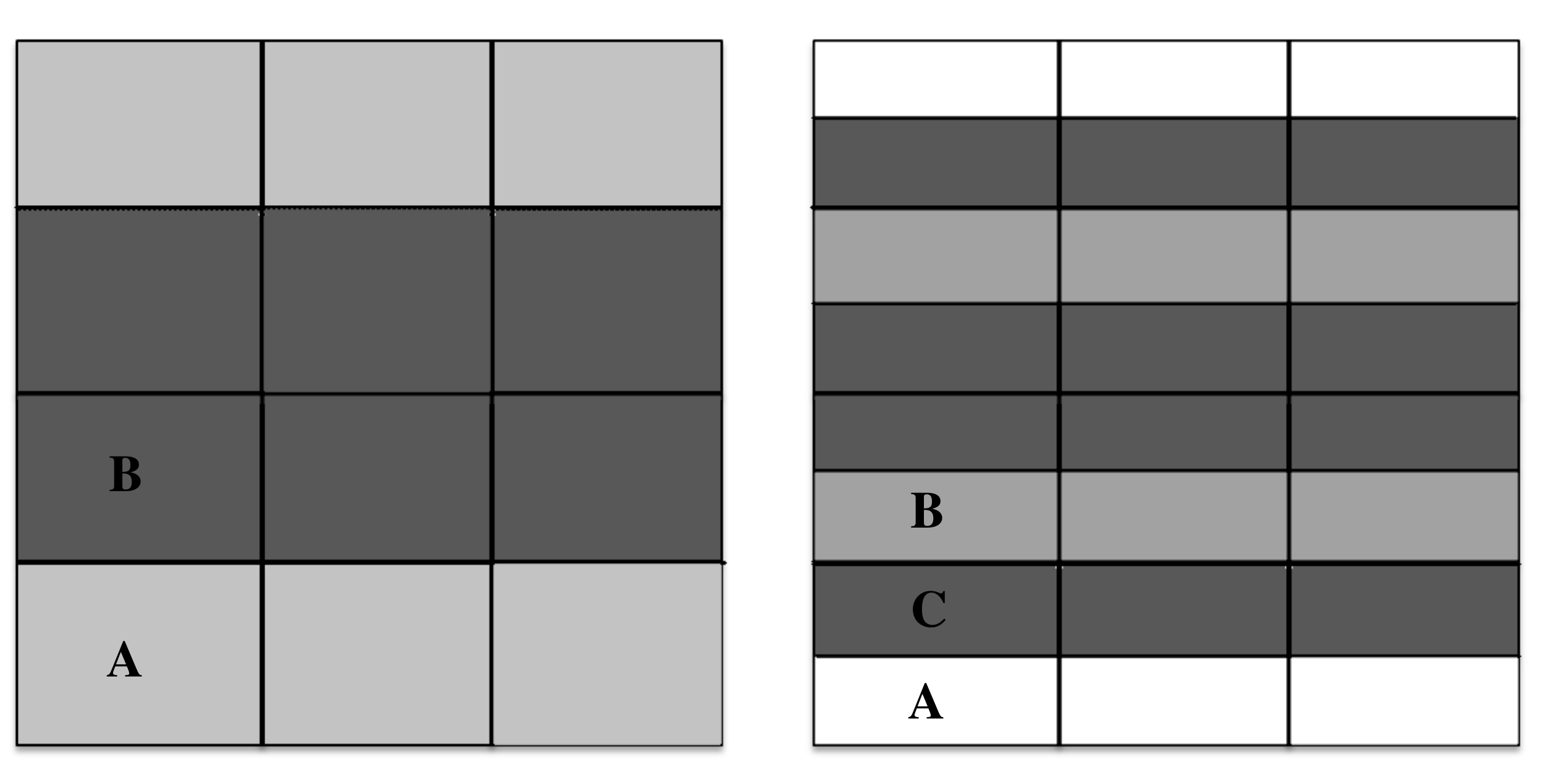}
	\caption{Escape rates for rectangles when \\
		(left) $m=1,n=2$; $\rho(A)\sim 0.08$, $\rho(B)\sim 0.1$; \\
		(right) $m=1,n=3$; $\rho(A)\sim 0.036$, $\rho(B)\sim 0.042$, $\rho(C)\sim 0.047$.}
	\label{fig:4}
\end{figure}

\begin{table}[h!]
	\centering
	\caption{$m=1,n=2$, each rectangle corresponds to union of three cylinders based at words of length two}	
	\begin{tabular}{|c c c c |} 
		\hline
		$R_{i,j,m,n}$ & Corresponding words & $\tau_{\text{min}}$& $\rho(R_{i,j,m,n})\sim$ \\ [0.5ex]
		\hline\hline
		$R_{0,0,1,2}$& $00,02,04$ &1& 0.08  \\\hline
		$R_{0,1,1,2}$& $01,03,05$ &2& 0.1  \\\hline
		$R_{0,2,1,2}$& $10,12,14$ &2& 0.1  \\\hline
		$R_{0,3,1,2}$& $11,13,15$ &1& 0.08  \\\hline
		$R_{1,0,1,2}$& $20,22,24$ &1& 0.08  \\\hline
		$R_{1,1,1,2}$& $21,23,25$ &2& 0.1  \\\hline
		$R_{1,2,1,2}$& $30,32,34$ &2& 0.1  \\\hline
		$R_{1,3,1,2}$& $31,33,35$ &1& 0.08  \\\hline
		$R_{2,0,1,2}$& $40,42,44$ &1& 0.08  \\\hline
		$R_{2,1,1,2}$& $41,43,45$ &2& 0.1  \\\hline
		$R_{2,2,1,2}$& $50,52,54$ &2& 0.1  \\\hline
		$R_{2,3,1,2}$& $51,53,55$ &1& 0.08  \\\hline
		
		\hline
	\end{tabular}
	\label{table:1}
\end{table}

\begin{table} 
	\centering
	\caption{$m=1,n=3$, each rectangle corresponds to union of nine cylinders based at words of length three.}	
	\begin{tabular}{|c c c c|} 
		\hline
		$R_{i,j,m,n}$ & Corresponding words & $\tau_{\text{min}}$& $\rho(R_{i,j,m,n})\sim$ \\ [0.5ex] 
		\hline\hline
		$R_{0,0,1,3}$& $000,002,004,020,022,024,040,042,044$ &1& 0.036 \\\hline 
		$R_{0,1,1,3}$& $001,003,005,021,023,025,041,043,045$ &3& 0.047 \\\hline 	
		$R_{0,2,1,3}$& $010,012,014,030,032,034,050,052,054$ &2& 0.042 \\\hline  
		$R_{0,3,1,3}$& $011,013,015,031,033,035,051,053,055$ &3& 0.047  \\\hline 
		$R_{0,4,1,3}$& $100,102,104,120,122,124,140,142,144$ &3& 0.047  \\\hline 
		$R_{0,5,1,3}$& $101,103,105,121,123,125,141,143,145$ &2& 0.042  \\\hline 
		$R_{0,6,1,3}$& $110,112,114,130,132,134,150,152,154$ &3& 0.047  \\\hline 
		$R_{0,7,1,3}$& $111,113,115,131,133,135,151,153,155$ &1& 0.036 \\\hline

		$R_{1,0,1,3}$& $200,202,204,220,222,224,240,242,244$ &1& 0.036  \\\hline  
		$R_{1,1,1,3}$& $201,203,205,221,223,225,241,243,245$ &3& 0.047 \\\hline 
		$R_{1,2,1,3}$& $210,212,214,230,232,234,250,252,254$ &2& 0.042  \\\hline 
		$R_{1,3,1,3}$& $211,213,215,231,233,235,251,253,255$ &3& 0.047  \\\hline 
		$R_{1,4,1,3}$& $300,302,304,320,322,324,340,342,344$ &3& 0.047  \\\hline 
		$R_{1,5,1,3}$& $301,303,305,321,323,325,341,343,345$ &2& 0.042  \\\hline 
		$R_{1,6,1,3}$& $310,312,314,330,332,334,350,352,354$ &3& 0.047  \\\hline 
		$R_{1,7,1,3}$& $311,313,315,331,333,335,351,353,355$ &1& 0.036  \\\hline

		$R_{2,0,1,3}$& $400,402,404,420,422,424,440,442,444$ &1& 0.036 \\\hline  
		$R_{2,1,1,3}$& $401,403,405,421,423,425,441,443,445$ &3& 0.047 \\\hline 
		$R_{2,2,1,3}$& $410,412,414,430,432,434,450,452,454$ &2& 0.042  \\\hline 
		$R_{2,3,1,3}$& $411,413,415,431,433,435,451,453,455$ &3& 0.047  \\\hline 
		$R_{2,4,1,3}$& $500,502,504,520,522,524,540,542,544$ &3& 0.047  \\\hline 		
		$R_{2,5,1,3}$& $501,503,505,521,523,525,541,543,545$ &2& 0.042  \\\hline
		$R_{2,6,1,3}$& $510,512,514,530,532,534,550,552,554$ &3& 0.047  \\\hline 
		$R_{2,7,1,3}$& $511,513,515,531,533,535,551,553,555$ &1& 0.036  \\\hline
		\hline
	\end{tabular}
	\label{table:2}
\end{table}

\begin{rem}
	Let $M=3$, $N=2$, $m=1$. As indicated in Tables~\ref{table:1} and~\ref{table:2} and Figure~\ref{fig:4}, the escape rate into the rectangles $R_{i,j,1,n}$ when $j$ is fixed is the same. Further, for fixed $i$, say $i=0$, the number of possible values for the escape rates into the collection of holes $\{R_{0,j,1,n}\ \vert\ 0\leq j\leq N^n-1\}$ is same as the number of possible values for the escape rates into the collection of holes $\{I_{j,n}=\left[\frac{j}{N^n},\frac{j+1}{N^n}\right)\ \vert\ 0\leq j\leq N^n-1\}$ for the map $T_N$ on $\mathbb{T}$. Moreover, the order is the same, that is, $\rho(R_{0,j_1,1,n})<\rho(R_{0,j_2,1,n})$ if and only if $\rho_{T_N}(I_{j_1,n})<\rho_{T_N}(I_{j_2,n})$. Hence we get a similar result as in Theorem~\ref{thm:tau_rho}. That means if $\tau_{\text{min}}(R_{0,j_1,1,n})<\tau_{\text{min}}(R_{0,j_2,1,n})$, then $\rho(R_{0,j_1,1,n})<\rho(R_{0,j_2,1,n})$.
	
	\begin{table} 
		\centering
		\caption{$m=4,n=2$, each rectangle corresponds to union of four cylinders based at words of length four.}	
		\begin{tabular}{|c c c c|} 
			\hline
			$R_{i,j,m,n}$ & Corresponding words & $\tau_{\text{min}}$& $\rho(R_{i,j,m,n})\sim$ \\
			\hline\hline
			$R_{0,0,4,2}$ & $0000,0001,0010,0011$ &1& 0.0028\\
			\hline
			$R_{1,0,4,2}$ & $0002,0003,0012,0013$ &4& 0.00312\\
			\hline
			$R_{2,0,4,2}$ & $0020,0021,0030,0031$ &3& 0.00309\\
			\hline
			$R_{10,0,4,2}$& $0202,0203,0212,0213$ &2& 0.00303\\
			\hline
			$R_{0,1,4,2}$ & $0100,0101,0110,0111$ &2& 0.00301\\
			\hline
			$R_{1,1,4,2}$ & $0102,0103,0112,0113$ &4& 0.00312\\
			\hline
			$R_{2,1,4,2}$ & $0120,0121,0130,0131$ &3& 0.00309\\
			\hline
			$R_{10,1,4,2}$& $0302,0303,0312,0313$ &2& 0.00303\\
			\hline
			$R_{0,2,4,2}$ & $1000,1001,1010,1011$ &2& 0.003\\
			\hline\hline
		\end{tabular}
		\label{table:2a}
	\end{table}
	
	Similarly when $n=1$ and $m$ varying, the escape rate into the rectangles $R_{i,j,m,1}$ when $i$ is fixed is the same. Further, for fixed $j$, say $j=0$, the number of possible values for the escape rates into the collection of holes $\{R_{i,0,m,1}\ \vert\ 0\leq i\leq M^m-1\}$ is same as the number of possible values for the escape rates into the collection of holes $\{I_{i,m}\ \vert\ 0\leq i\leq M^m-1\}$ for the map $T_M$ on $\mathbb{T}$. Moreover, $\rho(R_{i_1,0,m,1})<\rho(R_{i_2,0,m,1})$ if and only if $\rho_{T_M}(I_{i_1,m})<\rho_{T_M}(I_{i_2,m})$. The case of $m=2$ is described here, refer to Figure~\ref{fig:3}.
	
	Consider Table~\ref{table:2a} where $m=4$ and $n=2$. In this case we have each rectangle corresponds to $2^{4-2}=4$ words of length $4$.
	Note that $R_{0,1,4,2},R_{10,0,4,2}$ and $R_{0,2,4,2}$ have same minimal period $\tau_{\text{min}}$ but different escape rates. Hence, it is clear that unlike one-dimensional case, escape rate does not depend only on the minimal period.
\end{rem}	

\section{Comparing the escape rates - A basic rectangle versus union of basic rectangles with same measure}\label{sec:compare}
Let $T$ be a product of expansive Markov maps on $I^k$ which is conjugate to a full shift on $q$ symbols and $\tilde{\mu}$ be the Markov measure on $\Sigma_{q}^+$. The results in this section hold true for any map which is conjugate to a full shift. 
In this section, we will compare the escape rate into a hole in $I^k$ that corresponds to one cylinder with another hole in $I^k$ that corresponds to a union of cylinders (with certain conditions on the correlation of words), both of which have the same measure. 

Let $\Lambda=\{0,1,\dots,q-1\}$. Given any word $w$ of length $n$ with symbols from $\Lambda$, the measure of the corresponding rectangle $R_w$ in $I^k$ is $\tilde{\mu}(R_w)=\frac{1}{q^n}$ (see Remark~\ref{rem:meas}, 3)). Let $m\geq n$ be given. We consider the collection of words $w_1,w_2,\dots,w_k$ with symbols from $\Lambda$ each of length $\vert w_i\vert=m$ such that the union of rectangles $R_{w_1}\cup R_{w_2}\cup\dots\cup R_{w_k}$ has the same measure as $R_w$. \\
Further $\tilde{\mu}(R_w)=\tilde{\mu}(R_{w_1}\cup R_{w_2}\cup\dots\cup R_{w_k})$ implies $\frac{k}{q^m}=\frac{1}{q^n}$, and hence $k=q^{m-n}$. Moreover, we will consider rectangles $R_{w_1},R_{w_2},\dots,R_{w_k}$ satisfying the following properties;\\~\\
\textbf{(P)} $(w_iw_i)_z=z^{m-1}$ and $(w_iw_j)_z=0$, for every $i\neq j$. \\~\\
Construction of such words with property (P) will be described in Section~\ref{sec:construction}. For the remainder of this section, we assume that a collection of $k$ words $w_1,\dots,w_k$ with property (P) exists. The next result gives the relationship between escape rates into $R_w$ and the union $R_{w_1}\cup R_{w_2}\cup\dots\cup R_{w_k}$. It will be proved later in Remark~\ref{rem:proof}. 

\begin{thm}\label{thm:comparison}
	Let $w$ be any word of length $n$ and $w_1,w_2,\dots,w_k$ be a collection of words of length $m\geq n$ which satisfies the property (P), all with symbols from $\Lambda$, with $k=q^{m-n}$. Then 
	\[
	\rho(R_w)<\rho(R_{w_1}\cup R_{w_2}\cup\dots\cup R_{w_k}).
	\]
\end{thm}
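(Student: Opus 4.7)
The plan is to express both escape rates through Theorem~\ref{thm:formF} and compare the largest real roots of the resulting denominator polynomials. First, I compute the two generating functions. For the single word $w$ of length $n$, Remark~\ref{rem:F}, 2) gives
\[
F_w(z)=\frac{z(ww)_z}{(z-q)(ww)_z+1}=\frac{z(ww)_z}{p_1(z)},\qquad p_1(z):=(z-q)(ww)_z+1.
\]
For the collection $\{w_1,\dots,w_k\}$ satisfying property (P), the correlation matrix $M$ in~\eqref{matrixM} equals $z^{m-1}I_k$, so $M^{-1}=z^{-(m-1)}I_k$ and its sum of entries is $a(z)=k/z^{m-1}=q^{m-n}/z^{m-1}$. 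By Theorem~\ref{thm:formF},
\[
F_\cup(z)=\frac{z^m}{(z-q)z^{m-1}+q^{m-n}}=\frac{z^m}{p_2(z)},\qquad p_2(z):=z^m-qz^{m-1}+q^{m-n}.
\]
By Section~\ref{sec:fullshift} together with~\eqref{eq:entropy}, $\rho(R_w)=\ln q-\ln\mu_w$ and $\rho(R_{w_1}\cup\cdots\cup R_{w_k})=\ln q-\ln\mu_\cup$, where $\mu_w$, $\mu_\cup$ are the largest real roots of $p_1$, $p_2$ respectively (the Perron eigenvalues of the associated transition matrices). Thus the theorem reduces to showing $\mu_w>\mu_\cup$.

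Next I show both roots are strictly less than $q$ and locate $\mu_\cup$. Since $(ww)_z$ has non-negative coefficients, $p_1'(z)=(ww)_z+(z-q)(ww)_z'>0$ for $z\geq q$; combined with $p_1(q)=1>0$ this gives $p_1>0$ on $[q,\infty)$, so $\mu_w<q$. An analogous calculation using $p_2'(z)=z^{m-2}(mz-q(m-1))>0$ on $[q,\infty)$ and $p_2(q)=q^{m-n}>0$ yields $\mu_\cup<q$ and further $p_2>0$ on $(\mu_\cup,\infty)$ (any zero in $(\mu_\cup,q]$ would contradict the maximality of $\mu_\cup$ via the intermediate value theorem).

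The main step is then to show $p_2(\mu_w)>0$. The identity $p_1(\mu_w)=0$ rearranges to $(\mu_w-q)(ww)_{\mu_w}=-1$, hence
\[
p_2(\mu_w)=\mu_w^{m-1}(\mu_w-q)+q^{m-n}=\frac{q^{m-n}(ww)_{\mu_w}-\mu_w^{m-1}}{(ww)_{\mu_w}}.
\]
Since $(ww)_z$ is monic of degree $n-1$ with non-negative coefficients, $(ww)_{\mu_w}\geq\mu_w^{n-1}$. Combined with $\mu_w<q$ and $m>n$, this gives $q^{m-n}(ww)_{\mu_w}\geq q^{m-n}\mu_w^{n-1}>\mu_w^{m-n}\mu_w^{n-1}=\mu_w^{m-1}$, so $p_2(\mu_w)>0$. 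Since $p_2>0$ on $(\mu_\cup,\infty)$, this forces $\mu_w>\mu_\cup$, yielding $\rho(R_w)<\rho(R_{w_1}\cup\dots\cup R_{w_k})$.

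The step I expect to require the most care is the positivity $p_2(\mu_w)>0$, since it combines the defining relation for $\mu_w$, the minimality bound $(ww)_{\mu_w}\geq\mu_w^{n-1}$, and the strict inequality $\mu_w<q$. In particular the argument uses $m>n$ in an essential way (when $m=n$ one has $k=1$ and strictness would require $w$ itself to have non-trivial autocorrelation), so that case would be flagged explicitly before invoking the chain of strict inequalities above.
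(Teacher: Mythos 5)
Your route is genuinely different from the paper's: the paper proves this statement in Remark~\ref{rem:proof}, 2) by first invoking \cite[Lemma 4.5.1]{BY} to pass from an arbitrary word of length $n$ to one with minimal autocorrelation $z^{n-1}$ (i.e.\ to $\rho_{n,n}$), and then climbing from $\rho_{n,n}$ to $\rho_{m,n}$ by iterating Theorem~\ref{thm:large1}. You instead compare the largest real roots of the two denominator polynomials $p_1(z)=(z-q)(ww)_z+1$ and $p_2(z)=z^m-qz^{m-1}+q^{m-n}$ directly; this is self-contained, and your computation of $p_2(\mu_w)$ together with the bound $q^{m-n}(ww)_{\mu_w}\geq q^{m-n}\mu_w^{n-1}>\mu_w^{m-1}$ is correct.

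However, the final deduction has a genuine gap. From $p_2(\mu_w)>0$ and the fact that $p_2>0$ on $(\mu_\cup,\infty)$ you conclude $\mu_w>\mu_\cup$; this does not follow, because $p_2$ is also positive to the left of its \emph{smaller} positive root. By the argument in Theorem~\ref{thm:large} (whose hypothesis on the range of $m$ you must import, since it is what guarantees $p_2(q-1)<0$ and hence that $\mu_\cup$ exists and dominates), $p_2=p_{m,n}$ has exactly two positive roots, $r_1\in(0,q-1)$ and $\mu_\cup\in(q-1,q)$, and $p_2(0)=q^{m-n}>0$, so $p_2>0$ on $[0,r_1)$ as well. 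Thus $p_2(\mu_w)>0$ only places $\mu_w$ in $[0,r_1)\cup(\mu_\cup,\infty)$. The missing step is to locate $\mu_w$ above $q-1$: since $(ww)_z$ is monic of degree $n-1$ with coefficients in $\{0,1\}$, one has $p_1(q-1)=1-(ww)_{q-1}\leq 1-(q-1)^{n-1}<0$ for $q\geq 3$ and $n\geq 2$, while $p_1(q)=1>0$, so $\mu_w\in(q-1,q)$; combined with $p_2(q-1)<0<p_2(\mu_w)$ and the uniqueness of the root $\mu_\cup$ of $p_2$ in $(q-1,q)$, this yields $\mu_\cup<\mu_w$. A secondary point: writing $\rho(R_w)=\ln q-\ln\mu_w$ presupposes that $\mu_w$ dominates all complex roots of $p_1$ (equivalently, that it is the Perron root of the associated transition matrix); this is what Theorem~\ref{thm:large} and Remark~\ref{rem:rho_mn} supply for $p_2$ and what \cite{BY} supplies for the single-word case, but it should be cited rather than asserted. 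Your closing caveat about $m=n$ is well taken: there $k=1$ and the inequality is not strict unless $(ww)_z\neq z^{n-1}$, a caveat that in fact applies equally to the paper's own chain of inequalities in Remark~\ref{rem:proof}.
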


\subsection{Escape rate computation} Let $F(z)$ denote the generating function corresponding to the collection of $k=q^{m-n}$ words of length $m$ with property (P). From Theorem~\ref{thm:formF}, we have 
\[
F(z)=\frac{z}{(z-q)+a(z)},
\] where $a(z)$ is the sum of entries of $M^{-1}$. Here $M=z^{m-1}\text{Id}_{k\times k}$, hence 
\[
F(z)=\frac{z}{(z-q)+\frac{k}{z^{m-1}}}.
\]
Similarly for a single word $w$ of length $n$, the generating function $G(z)$ is given by
\[
G(z)=\frac{z(ww)_z}{(z-q)(ww)_z+1}.
\] 
Let $F(z)=\sum_{i=0}^{\infty}f_i z^{-i}$, hence
\[
\frac{z^{m}}{(z-q)z^{m-1}+k}=\sum_{i=0}^{\infty}f_i z^{-i}.
\]
Comparing the coefficients of $z^i$ for each $i\in\mathbb{Z}$, we get the recursive relation
\[
f_{m+h}=qf_{m+h-1}-kf_h, \ h\geq 0,
\]
with $f_0=1,f_1=q,f_2=q^2,\dots, f_{m-1}=q^{m-1}$.  
\\
Similarly let $G(z)=\sum_{i=0}^{\infty}g_i z^{-i}$ and $(ww)_z=z^{n-1}+\alpha_2 z^{n-2}+\dots+\alpha_n$ where $\alpha_1,\alpha_2,\dots,\alpha_n$ are either $0$ or $1$. Then equating
\[
\frac{z(ww)_z}{(z-q)(ww)_z+1}=\sum_{i=0}^{\infty}g_i z^{-i},\]
and comparing the coefficients, we get the following recursive relation
\begin{eqnarray*}
	g_{n+h}&=& (q-\alpha_2)g_{n+h-1}+(q\alpha_2-\alpha_3)g_{n+h-2}+\dots+(q\alpha_{n-1}-\alpha_n)g_{h+1}\\
	& & +(q\alpha_n-1)g_h, \ h\geq 0,
\end{eqnarray*}
with $g_0=1,g_1=q,g_2=q^2,\dots,g_{n-1}=q^{n-1}$.\\

\noindent Let us assume $(ww)_z=z^{n-1}$. That is, the minimal period of any sequence in $C_w$ is $n$, then we get the following recursive relation, 
\[
g_{n+h}=qg_{n+h-1}-g_h,\ h\geq 0,
\] 
with $g_0=1,g_1=q,g_2=q^2,\dots,g_{n-1}=q^{n-1}$.\\
Let $f_h=r^h$, then the characteristic polynomial for the generating function $F(z)$ becomes $r^m-qr^{m-1}+k=0$. Similarly for $G(z)$, characteristic polynomial will be $r^n-qr^{n-1}+1=0$. Hence generally, for $m\geq n$, the characteristic polynomial for the generating function is given by 
\[
p_{m,n}(r)=r^m-qr^{m-1}+k,
\] 
where $k=q^{m-n}$.\\
If $\mu_1,\mu_2,\dots,\mu_m$ are the $m$ roots of $p_{m,n}$ (counting multiplicity), then for $h\geq 0$, 
\[
f_h=c_1\mu_1^h+c_2\mu_2^h+\dots+c_m\mu_m^h,
\] 
for some appropriate constants $c_1,\dots,c_m$ which are computed using the initial conditions in the recurrence relation.

\begin{thm}\label{thm:large}
	For $n\geq 2$ and $m< (n-1)+\dfrac{(n-1)\ln(q-1)}{\ln(q)-\ln(q-1)}$, the polynomial $p_{m,n}$ has a simple positive real root $\mu_{m,n}$ and all the other roots have modulus less than $\mu_{m,n}$.
\end{thm}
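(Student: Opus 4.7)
The plan is to recast the hypothesis on $m$ as a clean algebraic inequality and then apply Rouch\'e's theorem on a single carefully chosen circle.

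First, I would rewrite the bound $m < (n-1) + \frac{(n-1)\ln(q-1)}{\ln q - \ln(q-1)}$ in algebraic form: clearing denominators and exponentiating shows it is equivalent to $q^{m-n+1} < (q-1)^m$, and dividing by $q$ yields the (strictly weaker) inequality $q^{m-n} < (q-1)^{m-1}$. Evaluating the polynomial directly gives
\[p_{m,n}(q-1) = (q-1)^{m-1}\bigl((q-1)-q\bigr) + q^{m-n} = -(q-1)^{m-1} + q^{m-n},\]
so the hypothesis forces $p_{m,n}(q-1) < 0$. Since $p_{m,n}(q) = q^{m-n} > 0$, the intermediate value theorem furnishes a real positive root $\mu_{m,n} \in (q-1, q)$.

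The heart of the argument is Rouch\'e's theorem applied on the circle $|r| = q - 1$ to the splitting $p_{m,n}(r) = f(r) + g(r)$ with $f(r) = r^{m-1}(r - q)$ and $g(r) = q^{m-n}$. Since $q - 1 < q$, on this circle the quantity $|r - q|$ attains its minimum $1$ only at the single point $r = q - 1$, so $|f(r)| = (q-1)^{m-1}|r - q| \geq (q-1)^{m-1} > q^{m-n} = |g(r)|$ at every point, with strict inequality throughout (the worst-case point $r = q - 1$ is handled exactly by our algebraic inequality). Rouch\'e then implies that $p_{m,n}$ has the same number of zeros in $|r| < q - 1$ as $f$ does; since $f$ has zeros of total multiplicity $m - 1$ at the origin and its remaining zero $q$ lies outside this disk, $p_{m,n}$ has exactly $m - 1$ zeros (counted with multiplicity) in $|r| < q - 1$.

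Because $p_{m,n}$ has degree $m$, exactly one of its zeros lies in $|r| \geq q - 1$. The real root $\mu_{m,n} \in (q-1, q)$ is one such zero, so it must be the unique remaining root. In particular $\mu_{m,n}$ is simple, because a double root there would contribute two zeros to the region $|r| \geq q - 1$ and contradict the Rouch\'e count; and every other root of $p_{m,n}$ has modulus strictly less than $q - 1 < \mu_{m,n}$. The only step requiring genuine care is verifying the strictness of the Rouch\'e inequality at the critical point $r = q - 1$ where $|f|$ is minimized, which is exactly what the reformulated hypothesis $q^{m-n} < (q-1)^{m-1}$ provides; the rest is routine bookkeeping with IVT and zero counts.
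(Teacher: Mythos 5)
Your proof is correct, and while it rests on the same two pillars as the paper's — locating $\mu_{m,n}\in(q-1,q)$ by a sign change and then counting the remaining roots with Rouch\'e — it applies Rouch\'e on a genuinely different and simpler contour. The paper first uses Descartes' rule and the evaluations $p_{m,n}(0)>0$, $p_{m,n}(q-1)<0$, $p_{m,n}(q)>0$ to get the root in $(q-1,q)$, then introduces the analogous root $\mu_{m+1,n}$ of $p_{m+1,n}$, proves the interlacing $\mu_{m+1,n}<\mu_{m,n}$, and applies Rouch\'e on the circle $|r|=\mu_{m+1,n}$ with the splitting $-qr^{m-1}$ versus $r^m+k$; the required inequality $q\mu_{m+1,n}^{m-1}>\mu_{m+1,n}^{m}+k$ is extracted from the identity $p_{m+1,n}(\mu_{m+1,n})=0$. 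You instead work on the fixed circle $|r|=q-1$ with the splitting $r^{m-1}(r-q)$ versus the constant $q^{m-n}$, and the only delicate point — strictness at $r=q-1$, where $|r-q|$ is minimized — is exactly the inequality $q^{m-n}<(q-1)^{m-1}$, which you correctly derive from the hypothesis. Your contour choice avoids the auxiliary polynomial $p_{m+1,n}$ altogether, and in fact it only uses the weaker condition $m<n+\frac{(n-1)\ln(q-1)}{\ln q-\ln(q-1)}$ (the stated bound is this minus one, which the paper needs precisely so that $\mu_{m+1,n}$ exists), so your argument establishes a marginally stronger statement. What the paper's detour buys is the monotonicity $\mu_{m+1,n}<\mu_{m,n}$ as a byproduct; that is not part of the present theorem but is the entire content of Theorem~\ref{thm:large1}, so if one adopts your proof, that comparison must still be established separately, e.g.\ by evaluating $p_{m,n}$ at $\mu_{m+1,n}$ as the paper does.
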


\begin{proof}
	Set $M_n=n+\dfrac{(n-1)\ln(q-1)}{\ln(q)-\ln(q-1)}$. Using Descartes' rules of sign, we can count the number of positive real zeros of $p_{m,n}$. It is easy to see that $p_{m,n}$ has either none or two positive real roots.
	Note that for $m<M_n$,
	\[
	p_{m,n}(0)>0, \ p_{m,n}(q-1)<0, \ p_{m,n}(q)>0,
	\]
	hence $p_{m,n}$ has two positive real roots, one in the interval $(0,q-1)$ and other in the interval $(q-1,q)$. Let $\mu_{m,n}$ be the unique positive real root lying in the interval $(q-1,q)$. We claim that all the other roots of $p_{m,n}$ have modulus less than $\mu_{m,n}$.\\
	Let $m<M_n-1$. From previous arguments, $\mu_{m+1,n}$ exists and lies in the interval $(q-1,q)$ since $m+1<M_n$. Consider 
	\begin{eqnarray*}
		p_{m,n}(\mu_{m+1,n})&=& \mu_{m+1,n}^m-q\mu_{m+1,n}^{m-1}+k\\
		&=&\frac{ \mu_{m+1,n}^{m+1}-q\mu_{m+1,n}^{m}+k\mu_{m+1,n}}{\mu_{m+1,n}}<\frac{p_{m+1,n}(\mu_{m+1,n})}{\mu_{m+1,n}}=0.
	\end{eqnarray*}
	Hence $\mu_{m,n}\in(\mu_{m+1,n},q).$
	Also $\mu_{m,n}\neq \mu_{m+1,n}$, since if they were equal then $p_{m+1,n}(\mu_{m,n})=p_{m,n}(\mu_{m,n})=0$ implies $\mu_{m,n}=1<q-1$, which is not true.\\
	We will now prove the claim stated before. For that, we will show that there exists exactly $m-1$ roots of $p_{m,n}$ (counting multiplicity) inside the ball of radius $\mu_{m+1,n}$, which implies that $\mu_{m,n}$ is the positive real root of $p_{m,n}$ with largest modulus.\\
	By Rouche's theorem, this happens if $q(\mu_{m+1,n})^{m-1}>(\mu_{m+1,n})^{m}+k$. Since $\mu_{m+1,n}$ is a root of $p_{m+1,n}$, we have $(\mu_{m+1,n})^{m+1}-q(\mu_{m+1,n})^{m}+qk=0$. Note that since $\mu_{m+1,n}<q$, we have 
	\begin{eqnarray*}
		(\mu_{m+1,n})^{m-1} (q-\mu_{m+1,n})^2>0 \\
		\implies q^2(\mu_{m+1,n})^{m-1}-q(\mu_{m+1,n})^m-qk>0\\
		\implies q(\mu_{m+1,n})^{m-1}-(\mu_{m+1,n})^m-k>0 
	\end{eqnarray*}
	which proves the result. 
\end{proof}

\begin{rem} \label{rem:rho_mn}
	From Theorem~\ref{thm:large}, $p_{m,n}$ has a positive real root $\mu_1=\mu_{m,n}$ such that all the other roots of $p_{m,n}$ have modulus less than $\mu_{1}$. Let $\rho_{m,n}$ denote the escape rate into the hole that corresponds to the union of cylinders based at $k=q^{m-n}$ words of length $m$ with property P. Then 
	\begin{eqnarray*}
		\rho_{m,n}&=&-\lim_{h \to \infty}\frac{1}{h}\ln 
		\frac{f_{h+m}}{q^{h+m}}\\
		&=&\ln q-\lim_{h \to \infty}\frac{1}{h}\ln\mu_1^{h+m}\left(\dfrac{c_1\mu_1^{h+m}+c_2\mu_2^{h+m}+\dots+c_m\mu_m^{h+m}}{\mu_1^{h+m}}\right)\\ &=& -\ln\left(\frac{\mu_1}{q}\right)>0. 
	\end{eqnarray*}
\end{rem}
\noindent Next we see that for a fixed $n$, as $m$ increases, the escape rate increases, the proof of which is immediate from Remark~\ref{rem:rho_mn}, and the fact that $\mu_{m+1,n}<\mu_{m,n}$ from Theorem~\ref{thm:large}.
\begin{thm}\label{thm:large1}
	Fix $n\geq 1$ and let $n\leq m<(n-1)+\dfrac{(n-1)\ln(q-1)}{\ln(q)-\ln(q-1)}$. Then
	\[
	\rho_{m,n}<\rho_{m+1,n}.
	\]
\end{thm}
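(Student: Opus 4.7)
The plan is to reduce the inequality $\rho_{m,n} < \rho_{m+1,n}$ to strict monotonicity of the distinguished real root $\mu_{m,n}$ of the characteristic polynomial $p_{m,n}(r) = r^m - qr^{m-1} + q^{m-n}$, and then to read off that monotonicity from a computation already present inside the proof of Theorem~\ref{thm:large}.

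First I would invoke Remark~\ref{rem:rho_mn}, which gives the closed form $\rho_{m,n} = -\ln(\mu_{m,n}/q)$. Since $\ln$ is strictly increasing, the inequality $\rho_{m,n} < \rho_{m+1,n}$ is equivalent to $\mu_{m+1,n} < \mu_{m,n}$, so the whole claim collapses to a comparison of two algebraic roots. Before that, I would check the bookkeeping with the hypothesis: $m < (n-1) + (n-1)\ln(q-1)/(\ln q - \ln(q-1))$ rearranges to $m + 1 < n + (n-1)\ln(q-1)/(\ln q - \ln(q-1)) = M_n$, so Theorem~\ref{thm:large} applies to both $p_{m,n}$ and $p_{m+1,n}$ and produces the unique largest-modulus positive real roots $\mu_{m,n},\mu_{m+1,n}\in(q-1,q)$.

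The comparison $\mu_{m+1,n} < \mu_{m,n}$ is in fact performed inside the proof of Theorem~\ref{thm:large}, and I would just reassemble that computation. Writing $k_m = q^{m-n}$ so that $k_{m+1} = qk_m$, one evaluates
\[
p_{m,n}(\mu_{m+1,n}) \;=\; \frac{\mu_{m+1,n}^{m+1} - q\,\mu_{m+1,n}^m + k_m\,\mu_{m+1,n}}{\mu_{m+1,n}},
\]
and since $\mu_{m+1,n} < q$ forces $k_m \mu_{m+1,n} < qk_m = k_{m+1}$, the numerator is strictly less than $p_{m+1,n}(\mu_{m+1,n}) = 0$. Combined with $p_{m,n}(q) = q^{m-n} > 0$, this places a real root of $p_{m,n}$ in the open interval $(\mu_{m+1,n}, q)$, which must be $\mu_{m,n}$ by Theorem~\ref{thm:large}. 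Strictness $\mu_{m,n}\neq\mu_{m+1,n}$ is automatic, since subtracting $p_{m,n}(r)=0$ and $p_{m+1,n}(r)=0$ at a common root $r$ would give $r = 1$, contradicting $\mu_{m,n} > q-1$.

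The only delicate point I anticipate is precisely the bookkeeping with the upper bound on $m$: the stated bound is sharp because one needs $m+1$, not merely $m$, to lie below $M_n$ so that Theorem~\ref{thm:large} produces $\mu_{m+1,n}$ in the first place. Beyond that, no new estimates are required, and the statement is essentially a corollary of the proof of Theorem~\ref{thm:large}.
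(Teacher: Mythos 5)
Your proposal is correct and follows the paper's own route exactly: the paper proves this theorem in one line by combining Remark~\ref{rem:rho_mn} (which gives $\rho_{m,n}=-\ln(\mu_{m,n}/q)$) with the inequality $\mu_{m+1,n}<\mu_{m,n}$ already established inside the proof of Theorem~\ref{thm:large}, and your argument simply unpacks that same root comparison. The only addition you make is spelling out the sign computation $p_{m,n}(\mu_{m+1,n})<0<p_{m,n}(q)$ and the bookkeeping on the bound for $m$, both of which are consistent with (and implicit in) the paper's proof.
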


\begin{rem}\label{rem:proof}
	1)  In Theorem~\ref{thm:large1}, if $m=n$, the hole is a rectangle that corresponds to a cylinder in which the minimal period of any sequence is $n$. The escape rate increases as the number of rectangles in the union collection increases. Moreover as the number of rectangles increases, the measure of each rectangle decreases.\\
	2) In~\cite[Lemma 4.5.1]{BY}, it was shown that if $u_1,u_2$ are two words of the same length, then $(u_1u_1)_q>(u_2u_2)_q$ (which is the autocorrelation polynomial evaluated at $z=q$) implies $\rho(R_{u_2})>\rho(R_{u_1})$. Thus, let $u$ be any word of length $n$ (with arbitrary autocorrelation) and for $m\geq n$, $u_1,u_2,\dots,u_m$ be a collection of $m$ words with property (P), then 
	\[
	\rho(R_u)<\rho(R_w)<\rho(R_{u_1}\cup R_{u_2}\cup\dots\cup R_{u_m}),
	\]
	where $w$ is any word of length $n$ with $(ww)_z=z^{n-1}$. This proves Theorem~\ref{thm:comparison}.
\end{rem}

\subsection{Construction of words with property (P)} \label{sec:construction}
Let $\Lambda=\{0,1,\dots ,q-1\}$ as before. We will construct $k=q^{m-n}$ words of length $m$ with symbols from $\Lambda$ which have property (P), that is, each of which have autocorrelation polynomial $z^{m-1}$ and whose cross-correlation polynomials are zero. 

\subsubsection{Construction 1}
Consider the collection
\[
S= \left\{ w(q-1)\ \vert \ w\in \{0,1,\dots,q-2\}^{m-1}\right\}.
\]
Note that $\#S =(q-1)^{m-1}$. The union of cylinders based at words in the collection $S$ correspond to the union $\bigcup_{v\in S}R_v$ of $(q-1)^{m-1}$ rectangles each of measure $\dfrac{1}{q^m}$. \\
For the map $T_{3,2}$ (as defined in~\eqref{eq:map}), these rectangles are scattered on the square $[0,1]\times [0,1]$ as shown in Figure~\ref{fig:6}, for $m=3$ and $q=6$. In Figure~\ref{fig:6}, the shaded rectangles correspond to the collection $S$. The collection $S$ is given by
\[
S= \left\{ ab5\ \vert \ a,b\in \{0,1,\dots,4\}\right\}.
\] 
It is clear that the number of shaded rectangles is 25.

\begin{figure}[h!]
	\centering
	\includegraphics[width=.8\textwidth]{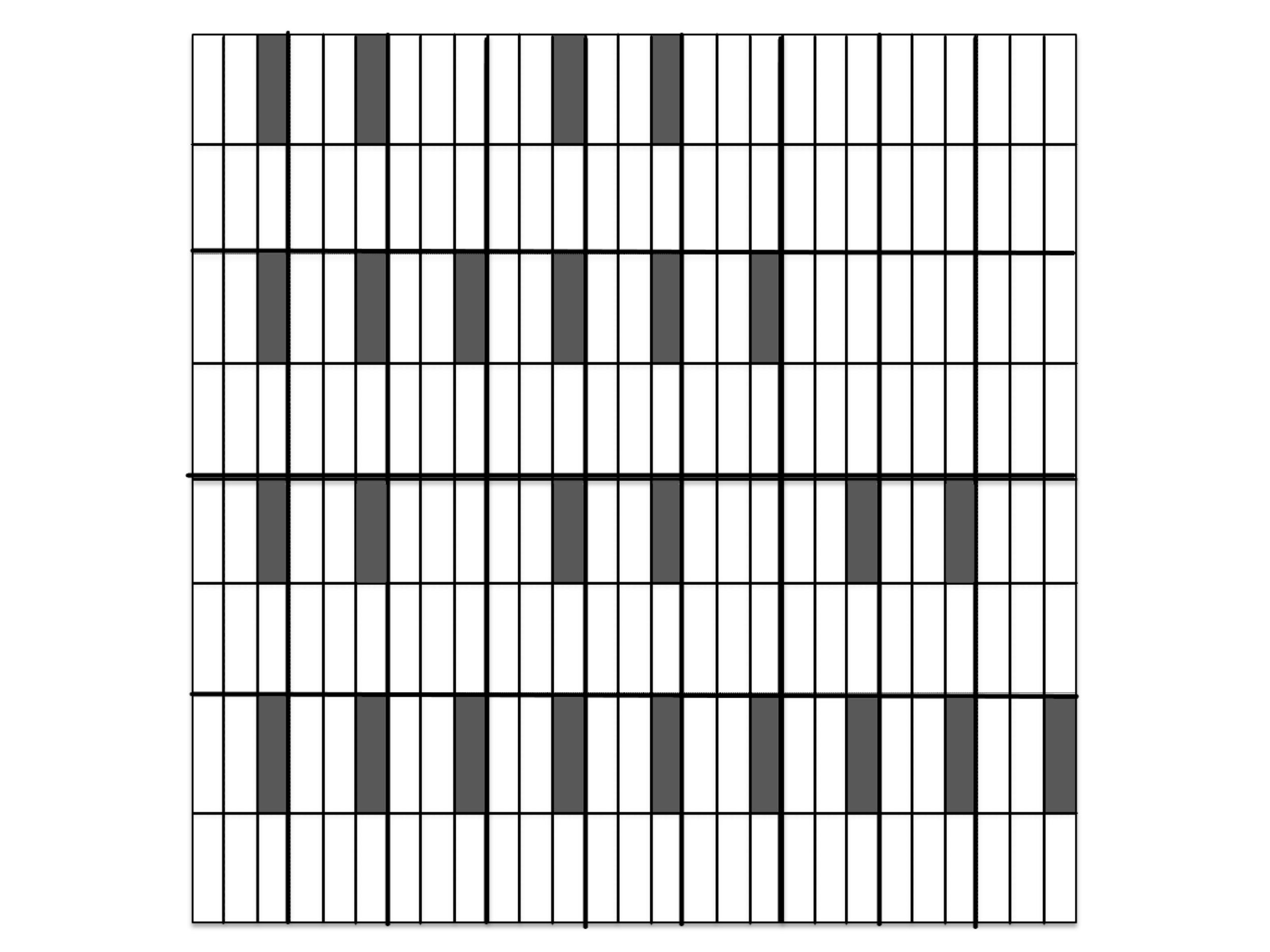}
	\caption{Rectangles corresponding to the collection $S$ in Construction 1 for $m=3$ and $q=6$ for the map $T_{3,2}$ on $\mathbb{T}^2$.}
	\label{fig:6}
\end{figure}

\noindent Note that 
\begin{eqnarray*}
	\#S\geq k \iff (q-1)^{m-1}\geq q^{m-n}\Rightarrow m &\leq& n+\dfrac{(n-1)\ln(q-1)}{\ln(q)-\ln(q-1)},
\end{eqnarray*} 
which appeared in Theorem~\ref{thm:large}. \\
Let $u=u_1u_2\dots u_m$ be any word of length $m$, which is not in $S$. Then either $u_m\neq q-1$ or there exists $p\in\{1,2,\dots,m-1\}$ such that $u_p=q-1$. The first situation cannot happen since otherwise the cross-correlation of $u$ with the word $v=u_m0\dots0(q-1)\in S$, i.e., $(uv)_z$ is non-zero. Now let $p$ be the first position where $u_p=q-1$. Then $v=00\dots 0u_1u_2\dots u_p\in S$ and $(vu)_z$ is non-zero. Hence this collection $S$ is maximal in the sense that if one more word $u$ of length $m$ with $(uu)_z=z^{m-1}$ is added to $S$, then there exists a word in $S$ whose cross-correlation with $u$ is non-zero.

\subsubsection{Construction 2} 
We can generalize Construction 1 as follows:\\
Choose $1\leq \ell< q-1$, and $\ell$ distinct symbols, say $i_1,i_2,\dots,i_\ell$, from $\Lambda$. \\
Consider the collection
\[
S=\{wi_r \vert \ w \text{ is a word that does not contain } i_1,i_2,\dots,i_\ell,1\leq r\leq \ell\}.
\]
Note that the first construction described is when $\ell=1$ and $i_1=q-1$ (there was nothing special about $q-1$ in Construction 1, we could have chosen any other symbol). Also 
$\#S=\ell(q-\ell)^{m-1}$. Moreover $S$ corresponds to the union $\bigcup_{v\in S}R_v$ of $\ell(q-\ell)^{m-1}$ rectangles each of measure $\dfrac{1}{q^m}$. For $S$ to have at least $k$ elements, $m$ should satisfy the following upper bound
\[ m\leq n+\dfrac{(n-1)\ln(q-\ell)+\ln(\ell)}{\ln(q)-\ln(q-\ell)}.
\]

\begin{rem}
	1) When $m>q$, the number of words in Construction 2 (that is, $l(q-\ell)^{m-\ell}$) is less than the number of words in Construction 1, which is $(q-1)^{m-1}$.\\
	2) For $n\geq 2$, it can be proved that the upper bound for $m$ obtained (as a function of $\ell$) is maximum when $\ell=1$. 
\end{rem}

\subsubsection{Construction 3} 
Choose $1\leq \ell< q-1$ and $\ell$ different symbols from $\Lambda$, say $i_1,i_2,\dots,i_\ell$. Let $1\leq r<m$. \\
Consider the following set of words 
\begin{eqnarray*}
	S &=& \{wu \vert \ w \text{ is a word of length $m-r$ that does not contain } \\
	& & i_1,i_2,\dots,i_\ell, \ \text{and } u\in \{i_1,\dots,i_\ell\}^{r}\}.
\end{eqnarray*}
Note that $\#S =(q-\ell)^{m-r}\ell^{r}$. Moreover $S$ corresponds to the union $\bigcup_{v\in S}R_v$ of $(q-\ell)^{m-r}\ell^{r}$ rectangles each of measure $\dfrac{1}{q^m}$.  For $S$ to have at least $k$ elements, $m$ should satisfy the following upper bound
\begin{eqnarray*}
	m &\leq& n+\dfrac{(n-r)\ln(q-\ell)+r\ln \ell}{\ln(q)-\ln(q-\ell)}.
\end{eqnarray*} 

\begin{rem}
	Construction 1 is a special case of Construction 3 when $r=1$ and $\ell=1$. Also Construction 2 is a special case of Construction 3 when $r=1$. 
\end{rem}

\begin{table}
	\centering
	\caption{Upper bound on $m$ in Construction 2 with $q=6$.}	
	\begin{tabular}{|c c c c c |} 
		\hline
		$n$ & $\ell=1$ & $\ell=2$& $\ell=3$ & $\ell=4$ \\ [0.5ex] 
		\hline\hline
		1&1&2&2&2\\\hline
		2&10&7&5&3\\\hline
		3&20&11&7&5\\\hline
		4&30&15&10&7\\\hline
		5&40&20&12&8\\\hline
		6&50&24&15&10\\\hline
		7&59&29&18&12\\\hline
		8&69&33&20&13\\\hline
		9&79&38&23&15\\\hline
		\hline
	\end{tabular}
	\label{table:3}
\end{table}

\noindent Taking $q=6$ and fixing $r=1$ in Construction 3 and thus looking at Constructions 1 and 2, we get Table~\ref{table:3} showing the largest value for $m$ obtained for different values of $n$ and $\ell$. Note that Construction 1 ($\ell=1$) gives largest set $S$ except when $n=1$.

\section{Subshift of finite type} \label{sec:subshift}
In this section, we consider the case where the product map is conjugate to the shift map on $\Sigma_{\mathcal{F}}$, where $\mathcal{F}$ is a collection of forbidden words. Let the hole $H$ correspond to another collection of forbidden words $\mathcal{F}_1$, with $\mathcal{F}$ and $\mathcal{F}_1$ consisting of words of same length. Then, by Theorem~\ref{thm:subshift},
\begin{eqnarray*}
	\rho(H)&=& h_{\text{top}}(\Sigma_{\mathcal{F}}) - h_{\text{top}}(\Sigma_{\mathcal{F}\cup \mathcal{F}_1}).
\end{eqnarray*}
We now give a few examples to illustrate this, with notations as before. The escape rates are calculated using the approach described in Section~\ref{sec:prod}.
\begin{exam}
	Let $f_1$ be any map on $I$ conjugate to the shift map on a subshift of finite type $\Sigma_{A_1}$ with transition matrix $A_1=\begin{pmatrix}
	1&1\\
	1&0
	\end{pmatrix}$. Consider the product map $f=f_1\times f_1$. Here $k=2$, $N_1=N_2=2$, and hence $N=N_1N_2=4$ and $\phi(0)=(0,0)$, $\phi(1)=(0,1)$, $\phi(2)=(1,0)$, and $\phi(3)=(1,1)$. Moreover $f$ is conjugate to the shift map on $\Sigma_A$ with 
	\[
	A=A_1\otimes A_1=\begin{pmatrix}
	1&1&1&1\\
	1&0&1&0\\
	1&1&0&0\\
	1&0&0&0
	\end{pmatrix}.
	\]
	Thus $\Sigma_{\mathcal{F}}=\Sigma_A$ consists of all the sequences in $\Sigma_4^+=\{0,1,2,3\}^\mathbb{N}$ with 
	\[
	\mathcal{F}= \{11, 13, 22, 23, 31, 32, 33\}. 
	\]
	The Perron (largest real) eigenvalue of $A$ is $\lambda_{\text{max}}=\frac{3+\sqrt{5}}{2}$ with normalized left and right eigenvectors given by $u=(0.723,0.447,0.447,0.276)$ and $v=u^T$, respectively. Note that $h_{\text{top}}(\Sigma_{\mathcal{F}})=\ln(\lambda_{\text{max}})=0.962$. 
	
	\begin{table}[h!]
		\centering
		\caption{Escape rates for $f$ into holes corresponding to a cylinder based at an allowed word of length two.}
		\begin{tabular}{|c|c|c|c|}
			\hline\hline
			Holes $H=R_{ij}$& $\tilde{\mu}(H)$ & $\rho(H)\sim$ & $\tau_{\text{min}}(H)$  \\\hline
			$R_{00}$ & 0.2 & 0.188 & 1\\\hline
			$R_{01}$, $R_{02}$, $R_{10}$, $R_{20}$ & 0.124 & 0.153 & 2\\\hline
			$R_{03}$, $R_{12}$, $R_{21}$, $R_{30}$ & 0.076 &0.081 & 2\\ \hline\hline	
		\end{tabular}\label{table:subshift1}
	\end{table}
	
	\begin{table}
		\centering
		\caption{Escape rates for $f$ into holes corresponding to a cylinder based at an allowed word of length three.}
		\begin{tabular}{|c|c|c|c|}
			\hline\hline
			Holes $H=R_{ijk}$& $\tilde{\mu}(H)$ & $\rho(H)\sim$ & $\tau_{\text{min}}(H)$  \\\hline
			$R_{010}$, $R_{020}$, $R_{030}$ & 0.076 & 0.081 & 2\\\hline
			$R_{000}$ & 0.076 & 0.057 & 1\\\hline
			$R_{001}$, $R_{002}$, $R_{012}$, $R_{021}$, $R_{100}$, $R_{120}$, $R_{200}$, $R_{210}$ & 0.047 &0.054 & 3\\ \hline
			$R_{003}$, $R_{102}$, $R_{201}$, $R_{300}$& 0.029 & 0.031 & 3\\\hline
			$R_{101}$, $R_{121}$, $R_{202}$, $R_{212}$& 0.029 & 0.028 & 2\\\hline
			$R_{103}$, $R_{203}$, $R_{301}$, $R_{302}$& 0.018 & 0.019 & 3\\\hline
			$R_{303}$& 0.011 & 0.010 & 2\\
			\hline\hline	
		\end{tabular} \label{table:subshift2}
	\end{table}
	
	\noindent Consider the holes $R_{00}$, $R_{01}$, $R_{02}$, $R_{03}$, $R_{10}$, $R_{12}$, $R_{20}$, $R_{21}$, $R_{30}$ corresponding to cylinders based at words of length two. Thus $\mathcal{F}_1=\{ij\}$. As discussed before, these holes need not have equal (Parry) measure even though all of these correspond to words of length two. The set $I^2\setminus\Omega_k(R_{ij})$ consists of all points in $I^2$ which correspond to sequences in $\Sigma_{\mathcal{F}}$ that do not contain the word $ij$ in their first $k+2$ positions. We did not consider holes corresponding to words in $\mathcal{F}$ since they are anyway forbidden in sequences in $\Sigma_{\mathcal{F}}$. See Table~\ref{table:subshift1} for the measure $\tilde{\mu}(H)$ of corresponding holes and escape rate $\rho(H)$ into them.
	
	Now consider the holes $R_{000}$, $R_{001}$, $R_{002}$, $R_{003}$, $R_{010}$, $R_{012}$, $R_{020}$, $R_{021}$, $R_{030}$, $R_{100}$, $R_{101}$, $R_{102}$, $R_{103}$, $R_{120}$, $R_{121}$, $R_{200}$, $R_{201}$, $R_{202}$, $R_{203}$, $R_{210}$, $R_{212}$, $R_{300}$, $R_{301}$, $R_{302}$, $R_{303}$ corresponding to cylinders based at words of length three. Thus $\mathcal{F}_1=\{ijk\}$. The set $I^2\setminus\Omega_k(R_{ijk})$ consists of all points in $I^2$ which correspond to sequences in $\Sigma_{\mathcal{F}}$ that do not contain the word $ijk$ in their first $k+3$ positions. Again note that we did not consider holes corresponding to words in $\mathcal{F}$. See Table~\ref{table:subshift2} for the measure $\tilde{\mu}(H)$ of corresponding holes and escape rate $\rho(H)$ into them.
	
	Note that in this example, $R_{010}$ and $R_{000}$ have equal measure, but different escape rate which illustrates that escape rate depends on the position of the hole (or, the correlation of the corresponding words), as was observed in~\cite{BY} for the full shift case.
\end{exam}

\begin{exam}
	Let $T_2:I\to I$ be defined as $T_2(x)=2x\ (\text{mod}\ 1)$ with the transition matrix $A_1=\begin{pmatrix}
	1&1\\1&1
	\end{pmatrix}$. Let $S$ be any map on $I$ conjugate to the shift map on a subshift of finite type with transition matrix $A_2=\begin{pmatrix}
	1&1\\1&0
	\end{pmatrix}$. Consider the product map $f=T_2\times T_2\times S:I^3\to I^3$. Here $N_1=N_2=N_3=2$, hence $N=8$ and $\phi(0)=(0,0,0),\phi(1)=(0,0,1),\phi(2)=(0,1,0),\phi(3)=(0,1,1),\phi(4)=(1,0,0),\phi(5)=(1,0,1),\phi(6)=(1,1,0),$ and $\phi(7)=(1,1,1)$. The transition matrix for $f$ is given by
	\[
	A=A_1\otimes A_1\otimes A_2=\begin{pmatrix}
	1&1&1&1&1&1&1&1\\
	1&0&1&0&1&0&1&0\\
	1&1&1&1&1&1&1&1\\
	1&0&1&0&1&0&1&0\\
	1&1&1&1&1&1&1&1\\
	1&0&1&0&1&0&1&0\\
	1&1&1&1&1&1&1&1\\
	1&0&1&0&1&0&1&0
	\end{pmatrix}.
	\]
	
	\noindent Note that the collection of forbidden words is 
	\[
	\mathcal{F}=\{ab: a,b \text{  are odd numbers, }0\leq a,b\leq 7\}.
	\]
	\begin{table}
		\centering
		\caption{Escape rates for $f=T_2\times T_2\times S$ into holes corresponding to a cylinder based at an allowed word of length two.}
		\begin{tabular}{|c|c|c|c|}
			\hline\hline
			Holes $H$& $\tilde{\mu}(H)\sim $ & $\rho(H)\sim$ & $\tau_{\text{min}}(H)$  \\\hline
			$R_{aa}$, $a$ is even & 0.0279  & 0.0251 & 1\\\hline
			$R_{ab}$, exactly one of $a$ or $b$ is even  & 0.0173 & 0.0176 & 2\\\hline
			$R_{ab}$, both $a\ne b$ are even & 0.0279 &0.0293 & 2\\ \hline\hline	
		\end{tabular}\label{table:subshift3}
	\end{table}
	
	\noindent Table~\ref{table:subshift3} shows the escape rates into holes corresponding to cylinders based at words of length two. Here, $\tau_{\text{min}}(R_{00})<\tau_{\text{min}}(R_{01})$ but $\rho(R_{00})>\rho(R_{01})$, and $\tau_{\text{min}}(R_{00})<\tau_{\text{min}}(R_{02})$ but $\rho(R_{00})<\rho(R_{02})$. Thus, unlike the one-dimensional case, escape rate into the holes for the product of maps does not always depend on the minimal period of the hole (see also Theorem~\ref{thm:tau_rho}). 
\end{exam}

\begin{exam}\label{exam_123}
	Even if the holes corresponding to the collections $\mathcal{F}_1$ and $\mathcal{F}_2$ have similar words in the sense that they give same correlation polynomials, their cross-correlations with words from $\mathcal{F}$ may be different. Hence, by Theorem~\ref{thm:subshift}, they may have different escape rates. It can be observed in the one-dimensional map case itself. 
	\begin{figure}[h!]
		\centering
		\begin{subfigure}{.45\textwidth}
			\centering
			\includegraphics[width=1.0\linewidth]{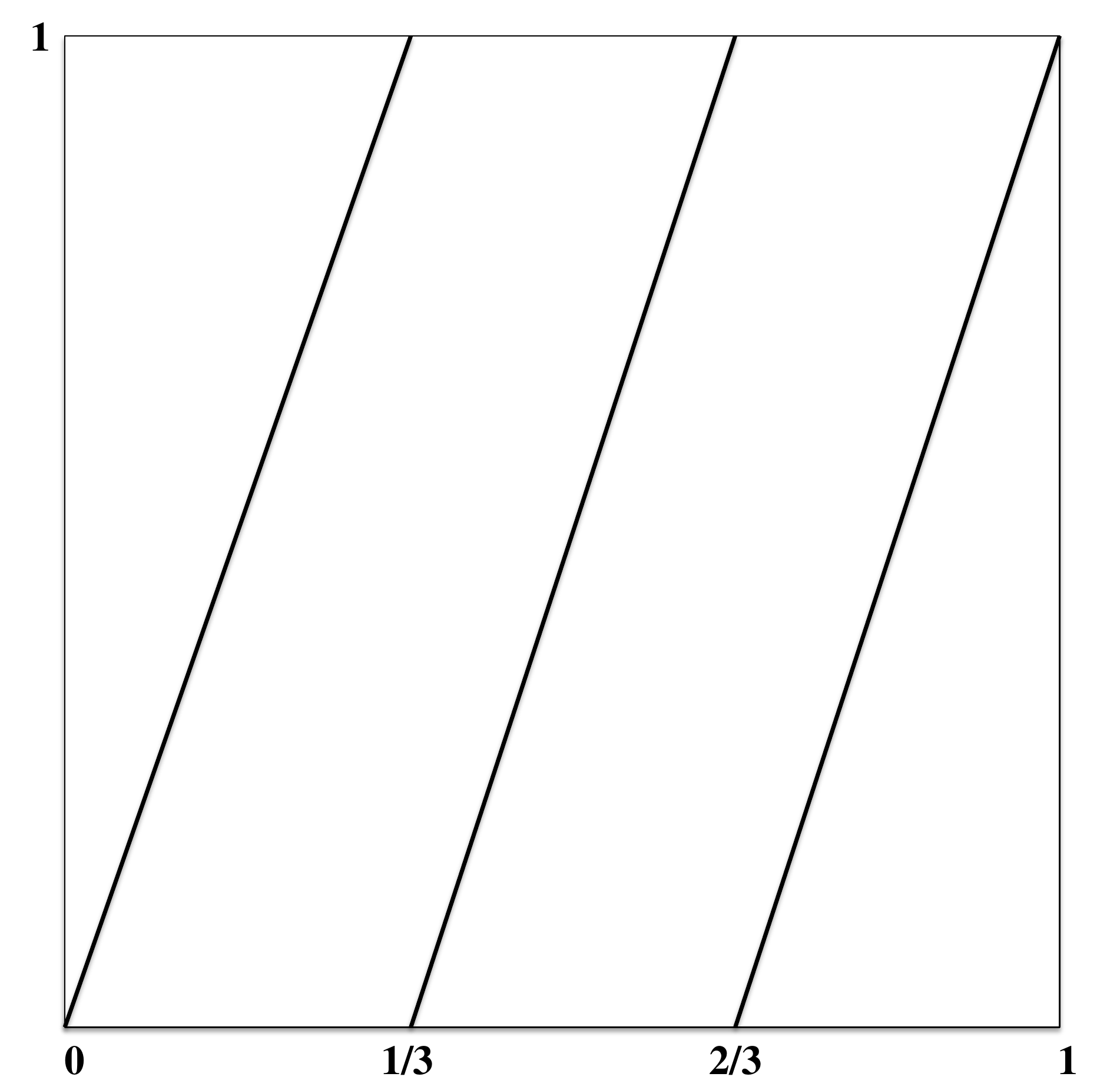}
			\caption*{Map $T_1$}
		\end{subfigure}
		\begin{subfigure}{.45\textwidth}
			\centering
			\includegraphics[width=1.0\linewidth]{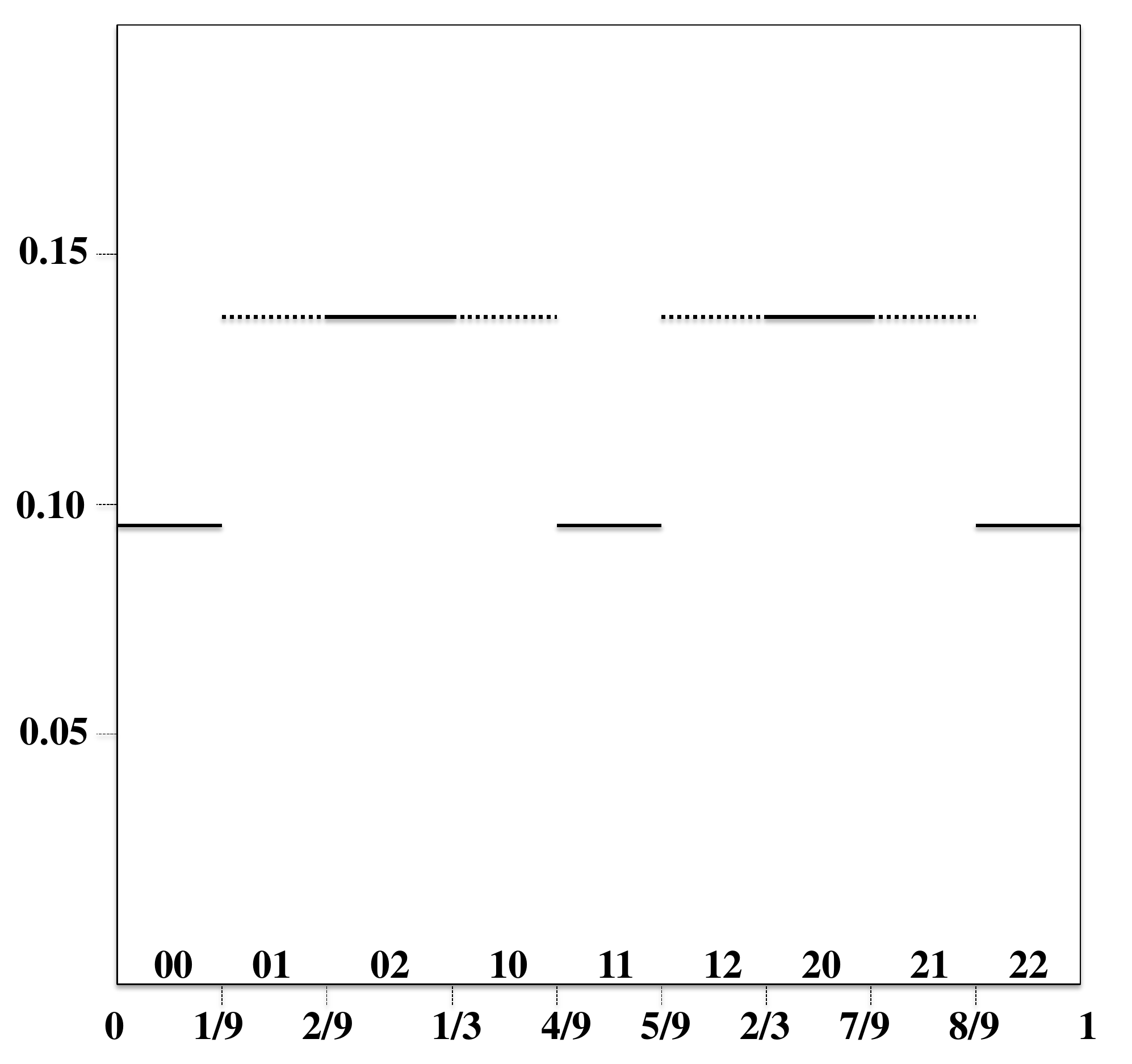}
			\caption*{Escape rates for $T_1$}
		\end{subfigure}
		\begin{subfigure}{.45\textwidth}
			\centering
			\includegraphics[width=1.0\linewidth]{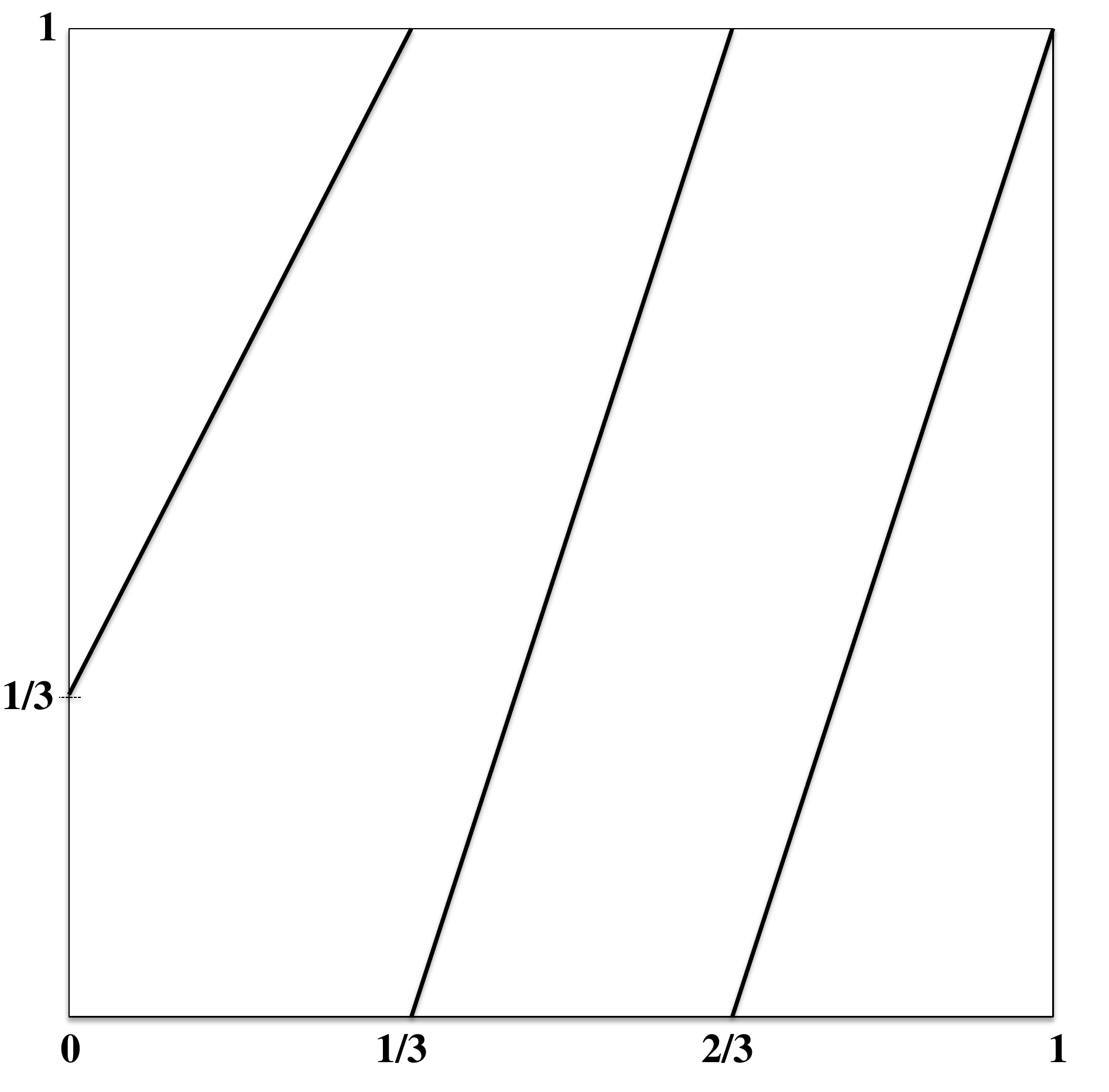}
			\caption*{Map $T_2$}
		\end{subfigure}
		\begin{subfigure}{.45\textwidth}
			\centering
			\includegraphics[width=1.0\linewidth]{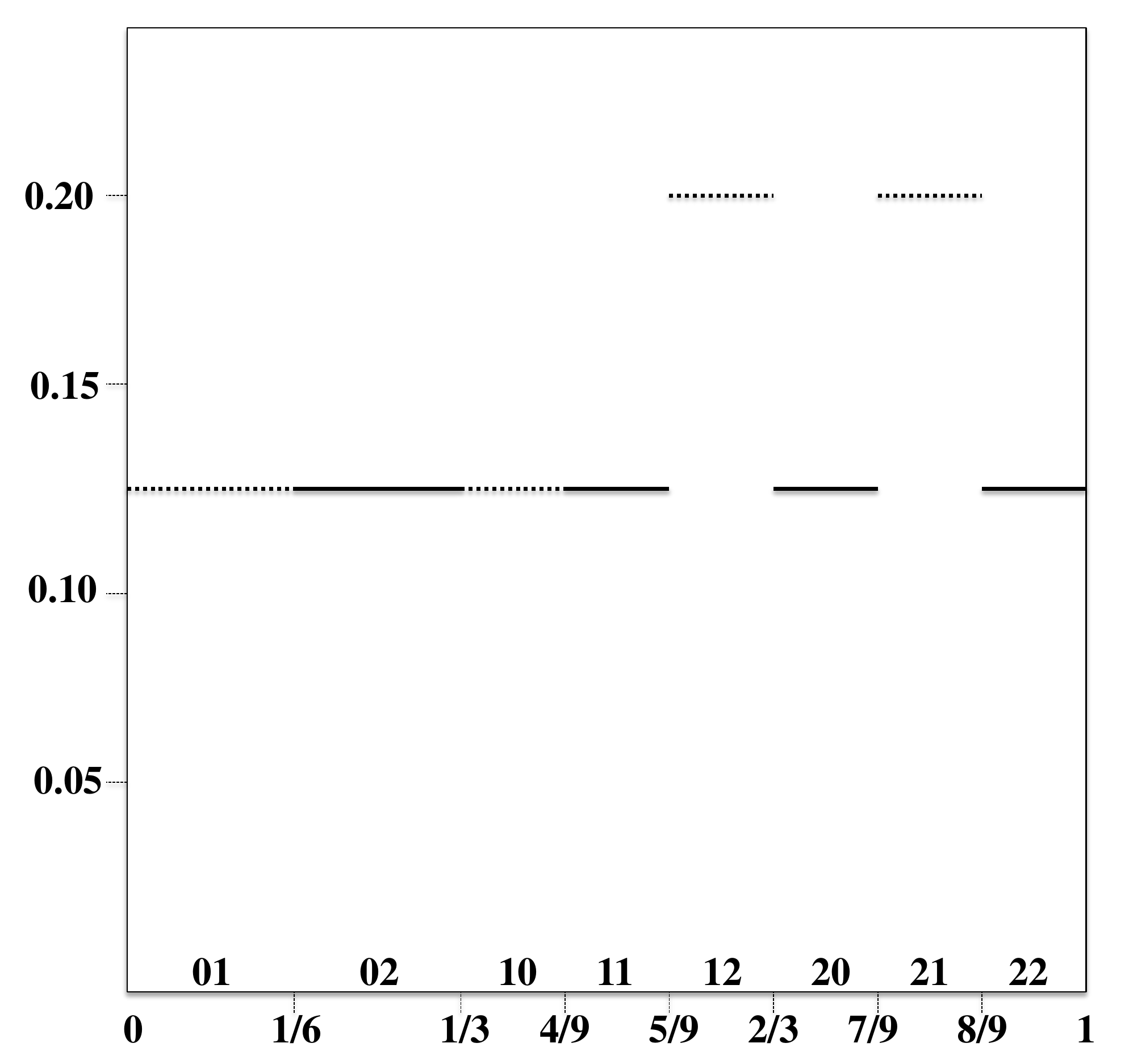}
			\caption*{Escape rates for $T_2$}
		\end{subfigure}
		\begin{subfigure}{.45\textwidth}
			\centering
			\includegraphics[width=1.0\linewidth]{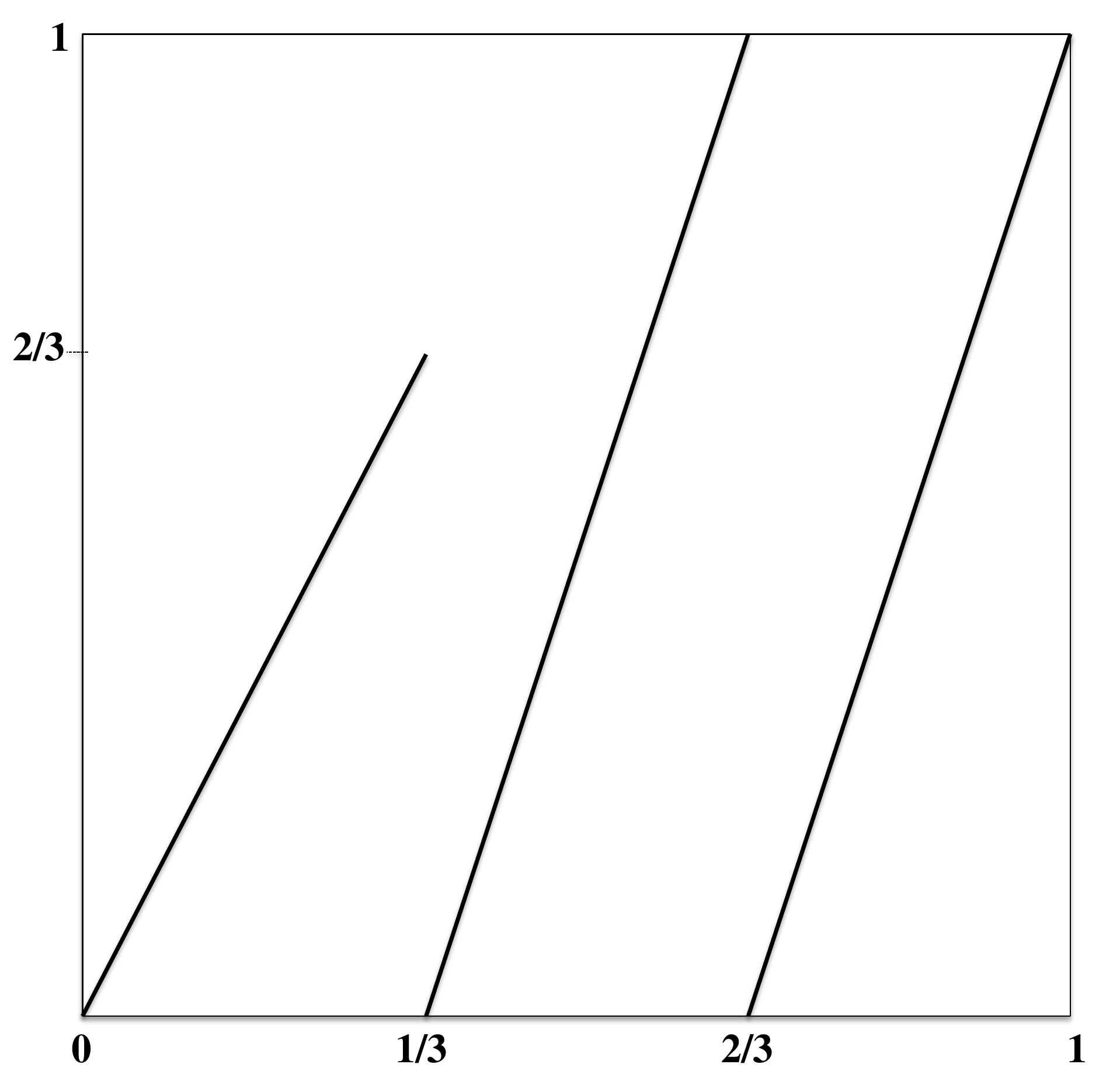}
			\caption*{Map $T_3$}
		\end{subfigure}
		\begin{subfigure}{.45\textwidth}
			\centering
			\includegraphics[width=1.0\linewidth]{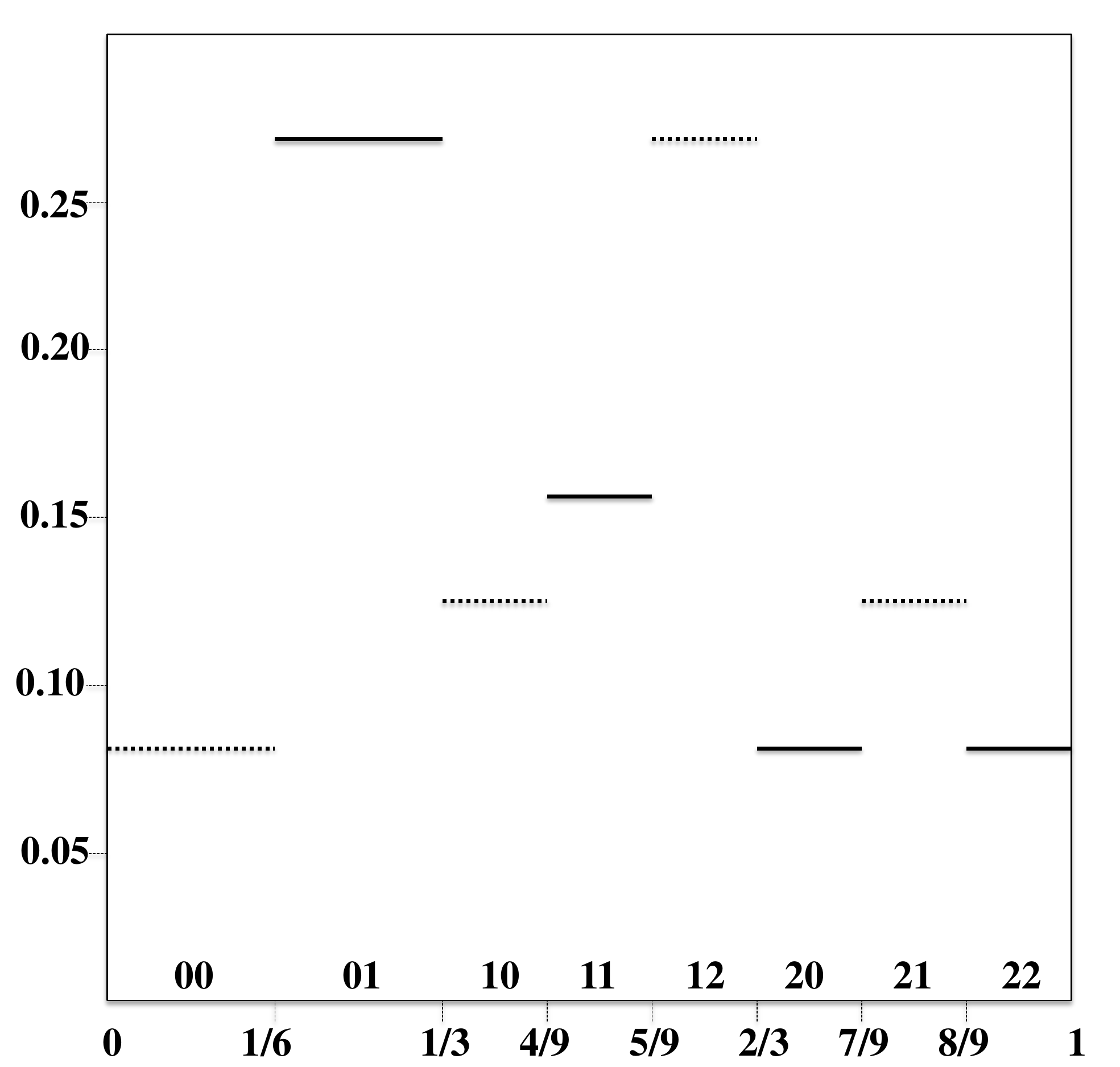}
			\caption*{Escape rates for $T_3$}
		\end{subfigure}
		\caption{Maps conjugate to shift map on subshift of finite type.}
		\label{fig:mapT}
	\end{figure}
	
	In Figure~\ref{fig:mapT}, three maps $T_1,T_2$ and $T_3$ on $I=[0,1]$ are shown. All the maps are expansive Markov and hence conjugate to the shift map on some subshift of finite type. Let $I_{w}$ denote the interval contained in $I$ corresponding to the cylinder based at the word $w$.
	
	The map $T_1$ is conjugate to the shift map on the full shift space $\Sigma_3^+$. Hence $\mathcal{F}=\phi$ in this case. We consider the hole corresponding to a cylinder based at a word of length two, i.e. $\mathcal{F}_1=\{ab\}$, $a,b\in\{0,1,2\}$. Since $T_1$ is conjugate to the shift map on full shift space, each of this hole $I_{ab}$ will have equal measure $\tilde{\mu}(I_{ab})=\frac{1}{3^2}$. Note that, in this case, these holes give two possible values of escape rates. Here holes corresponding to cylinders based at words with same autocorrelation polynomial give the same escape rate. Such maps which are conjugate to a full shift were extensively studied in~\cite{BY}.   
	
	Now consider the map $T_2$. The map is conjugate to shift map on $\Sigma_{\mathcal{F}}\subseteq \Sigma_3^+$, where $\mathcal{F}=\{00\}$. In this case, the holes of the type $I_{ab}$, corresponding to cylinder based at an allowed word of length two, do not necessarily have equal measure $\tilde{\mu}$. Also, the escape rate depends on the cross-correlation of words $ab$ and $00$. Hence it exhibits a different pattern for the escape rates. Note that there are two possible escape rates into the holes of the type $I_{ab}$ in this case as well. The collections $\mathcal{F}_1=\{01\}$ and $\mathcal{F}_2=\{12\}$ give same autocorrelation polynomials but they have different cross-correlation polynomials with $\mathcal{F}$, and $\rho(I_{01})\ne\rho(I_{12})$. It is worth noting that $\tilde{\mu}(I_{01})\ne\tilde{\mu}(I_{11})$, but the escape rates are the same. See Table~\ref{table:T_2} for measure $\tilde{\mu}(H)$ of corresponding holes and escape rate $\rho(H)$ into them. Referring to~\eqref{length2}, $a(z)$ is also given in the table for words $00$ and $ab$.  
	
	\begin{table}
		\centering
		\caption{Escape rate for $T_2$ into holes corresponding to a cylinder based at an allowed word of length two.}
		\renewcommand{\arraystretch}{2.5}
		\begin{tabular}{|c|c|c|c|c|}
			\hline\hline
			Hole $H=I_{ab}$& $\tilde{\mu}(H)\sim$ & $a(z)$ & $a(3)$ & $\rho(H)\sim$  \\\hline
			$I_{01},I_{02},I_{10},I_{20}$& 0.1057 & $\dfrac{2}{z+1}$ & $\dfrac{1}{2}$ & 0.1237\\\hline
			$I_{11},I_{22}$ & 0.1443 & $\dfrac{2}{z+1}$ & $\dfrac{1}{2}$ &  0.1237\\\hline
			$I_{12},I_{21}$&0.1443 & $\dfrac{2z+1}{z(z+1)}$ & $\dfrac{7}{12}$ & 0.1955\\\hline\hline
		\end{tabular}\label{table:T_2}
	\end{table}
	
	\begin{table}
		\centering
		\caption{Escape rate for $T_3$ into holes corresponding to a cylinder based at an allowed word of length two.}
		\renewcommand{\arraystretch}{2.5}
		\begin{tabular}{|c|c|c|c|c|}
			\hline\hline
			Hole $H=I_{ab}$& $\tilde{\mu}(H)\sim$ & $a(z)$& $a(3)$ & $\rho(H)\sim$  \\ \hline
			$I_{00},I_{22}$ & 0.1056 & $\dfrac{2}{z+1}$ & $\dfrac{1}{2}$& 0.0810\\\hline
			$I_{01},I_{12}$ & 0.1708 & $\dfrac{2}{z}$ & $\dfrac{2}{3}$& 0.2693\\\hline
			$I_{10},I_{21}$ & 0.1056 & $\dfrac{2z-1}{z^2}$ & $\dfrac{5}{9}$ & 0.1188 \\\hline
			$I_{11}$ & 0.1708 & $\dfrac{2z+1}{z(z+1)}$ & $\dfrac{7}{12}$& 0.1528\\\hline
			$I_{20}$ & 0.0652 & $\dfrac{2}{z+1}$ & $\dfrac{1}{2}$& 0.0810\\\hline
			\hline
		\end{tabular}\label{table:T_3}
	\end{table}
	
	Finally consider the map $T_3$, which is conjugate to the shift map on $\Sigma_{\mathcal{F}}\subseteq \Sigma_3^+$, where $\mathcal{F}=\{02\}$. Similar to the previous case of $T_2$, holes of the type $I_{ab}$ corresponding to cylinders based at words of length two do not have equal measure $\tilde{\mu}$. Unlike for $T_1$ and $T_2$, in this case, there are four possible values of escape rates. Here, the collections $\mathcal{F}_1=\{00\}$ and $\mathcal{F}_2=\{11\}$ give the same autocorrelation polynomials but they have different cross-correlation polynomials with $\mathcal{F}$, and $\rho(I_{00})\ne\rho(I_{11})$. Note that $I_{00}$ and $I_{20}$ have different measures but same escape rates. See Table~\ref{table:T_3} for the measure $\tilde{\mu}(H)$ of corresponding holes, $a(z)$ (for words $02$ and $ab$) and escape rate $\rho(H)$ into them.
	
	In the above examples, $q=3$. The value $a(3)$ is also shown in Tables~\ref{table:T_2} and~\ref{table:T_3}. Recall that if there is only one forbidden word $w$, $a(q)=1/(ww)_q$. For the full shift case, in~\cite[Lemma 4.5.1]{BY}, it was shown that if $w_1,w_2$ are two words of the same length, then $(w_1w_1)_q>(w_2w_2)_q$ implies $\rho(R_2)>\rho(R_1)$, where $R_i$ is the rectangle in $I$ corresponding to the cylinder based at the word $w_i$, $i=1,2$. The maps $T_2$ and $T_3$ exhibit the same pattern. Of course, $R_1$ and $R_2$ have unequal measures. In both the cases, the escape rate increases with the value of $a(3)$. It is worth exploring whether this behaviour is true in general, for maps conjugate to the shift map on some subshift of finite type, when the number of forbidden words is more than one.
\end{exam}

\section{Concluding Remarks}\label{sec:conc}
In this paper, we studied the escape rate into Markov holes for maps which are product of expansive Markov maps. Such maps are conjugate to a subshift of finite type, hence tools from symbolic dynamics can be used. The hole corresponds to a union of cylinders and the escape rate depends on the correlations of the associated words and the size of the set of symbols. We have presented examples of product of expansive Markov maps, specifically product of two $k$-expanding transformations, in which case, their product is conjugate to a full shift.

We have presented several examples to illustrate the general situation where the product of expansive Markov maps is conjugate to some subshift of finite type. We saw that several results from~\cite{BY} breakdown in this case, such as the dependence of escape rate on the minimal period of the hole. Several interesting questions can be asked in this framework. In general, does $a(q)$ determine the escape rate, as observed in Example~\ref{exam_123} for the maps $T_2$ and $T_3$. Are there any other (dynamical) factors that influence the escape rate into the hole other than size (length and number of the forbidden words) and position of the hole (correlations between forbidden words)? How to obtain the escape rate into an arbitrary hole in the torus which can be written as a limit of union of basic rectangles of the type $R_{i,j,m,n}$ (as the rectangles of this type form a sufficient semi-ring)? Can one characterize the holes in the torus such that the Hausdorff dimension of the survival set $\mathcal{W}=\{x\in I^k\ \vert\ T^nx\notin H,\ \text{for all}\ n \geq 0\}$ is non-zero? If $p_0$ is the initial probability measure on the state space $X$ and $\mathcal{W}_n$ be the complement of $\Omega_n$ for the hole $H\subseteq X$, then at what rate does $p_0(\mathcal{W}_n)$ decay? (note that $\mathcal{W}_{n+1}\subseteq \mathcal{W}_n$). Can we obtain some general results to compare the escape rate into holes for the maps that are conjugate to a subshift of finite type? 

A combinatorial question, independent of the underlying dynamics is: what is the largest number of words of length $m$ (fixed) with property (P) that can be constructed? Also, we observed that for $m>q$, the collection in Construction 1 is larger than the collection in Construction 2. Another question is whether Construction 1 is optimal. That is, does there exist a collection of $(q-1)^{m-1}+1$ words of length $m(>q)$ with property (P). Of course, as observed, the collection in Construction 1 cannot be expanded.

\section{Funding}
The research of the first author is supported by the Council of Scientific \& Industrial Research (CSIR), India (File no.~09/1020(0133)/2018-EMR-I), and the second author is supported by Center for Research on Environment and Sustainable Technologies (CREST), IISER Bhopal, CoE funded by the Ministry of Human Resource Development (MHRD), India.

\end{document}